\theoremstyle{definition}
\newtheorem{theorem}[equation]{Theorem}
\newtheorem{lemma}[equation]{Lemma}
\newtheorem{corollary}[equation]{Corollary}
\newtheorem{proposition}[equation]{Proposition}
\newtheorem{remark}[equation]{Remark}
\newenvironment{customprop}[1]
  {\innercustomprop}
  {\endinnercustomprop}
\renewcommand{\phi}{\varphi}
\newcommand{\D}{\mathrm{d}}
\newcommand{\E}{\mathrm{e}}
\newcommand{\ti}{\tilde}
\renewcommand{\(}{\bigl(}
\renewcommand{\)}{\bigr)\vphantom{)}}
\newcommand{\impl}{\;\Longrightarrow\;}
\newcommand{\imp}{$ \Longrightarrow $ }
\newcommand{\vol}{\operatorname{vol}}
\newcommand{\length}{\operatorname{length}}
\newcommand{\arglength}{\operatorname{arglength}}
\newcommand{\width}{\operatorname{width}}
\newcommand{\const}{\operatorname{const}}
\newcommand{\eps}{\varepsilon}
\newcommand{\si}{\sigma}
\newcommand{\de}{\delta}
\newcommand{\De}{\Delta}
\newcommand{\al}{\alpha}
\newcommand{\be}{\beta}
\newcommand{\la}{\lambda}
\newcommand{\Ex}{\mathbb E\,}
\newcommand{\R}{\mathbb R}
\newcommand{\PR}[1]{\mathbb{P}\mskip1.5mu\bigg(\mskip1.5mu#1\mskip1.5mu\bigg)}
\newcommand{\myatop}[2]{{\genfrac{}{}{0pt}{}{#1}{#2}}}
\newcommand{\close}[1]{$#1$\nobreakdash-\hspace{0pt}close}
\begin{document}

\title{Linear response and moderate deviations:\\ hierarchical
  approach. III}

\author{Boris Tsirelson}

\date{}
\maketitle

\begin{abstract}
The Moderate Deviations Principle (MDP) is well-understood for sums of
independent random variables, worse understood for stationary random
sequences, and scantily understood for random fields.
Here it is established for splittable random fields.
\end{abstract}

\setcounter{tocdepth}{2}
\tableofcontents

\numberwithin{equation}{section}

\section[Introduction, and main result formulated]
  {\raggedright Introduction, and main result formulated}
\label{sect1}
Splittable random fields defined in \cite{II} generalize splittable
random processes defined in \cite{I}. The Moderate Deviations Principle
(MDP) for splittable processes is obtained in \cite{I} via ``Linear
Response Principle'' (LRP), the latter being the quadratic logarithmic
asymptotics for exponential moments. Both LRP and MDP are generalized
here for splittable centered measurable stationary (``CMS'') random
fields, as defined in \cite[Sect.~1]{II} (see \cite[(1.1)--(1.3)]{II}
for CMS random fields, and \cite[Def.~1.4]{II} for splittability).

\begin{theorem}[\emph{``linear response''}]\label{theorem1}
The following limit exists for every splittable CMS random
field $ X $ on $ \R^d $:
\[
\lim_\myatop{ r_1,\dots,r_d\to\infty, \la\to0 }{ \la\log^d (r_1\dots
r_d) \to 0 }
\frac1{ r_1\dots r_d\la^2
} \log \Ex \exp \la \int_{[0,r_1]\times\dots\times[0,r_d]} X_t \, \D
t \, .
\]
\end{theorem}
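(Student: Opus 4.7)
The plan is to extend the one-dimensional linear-response argument of \cite{I} to $d$ dimensions by a hierarchical bisection grounded in splittability. Writing
\[
\psi(R,\la) = \log \Ex \exp \Bigl( \la \int_R X_t \, \D t \Bigr)
\]
for axis-aligned rectangles $R\subset\R^d$, the goal is convergence of $\psi(R,\la)/(\la^2|R|)$ in the prescribed limit. The first ingredient is a single-split estimate: when $R=[0,r_1]\times\dots\times[0,r_d]$ is bisected across the midpoint of its $k$-th edge into $R^-\cup R^+$, splittability (\cite[Def.~1.4]{II}) gives that $\int_{R^-}X$ and $\int_{R^+}X$ are nearly independent, so integration against $e^{\la(\cdot)}$ yields
\[
\psi(R,\la) = \psi(R^-,\la)+\psi(R^+,\la)+\eta_k(R,\la),
\]
with $\eta_k$ controlled by the splittability parameter of $X$ at the relevant scale.

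Next, iterate: after $n_k$ bisections along axis $k$ (for $k=1,\dots,d$), $R$ is partitioned into $N=2^{n_1+\dots+n_d}$ congruent sub-rectangles, each contributing the same value $\psi_{\mathbf n}(\la)$ by stationarity, and telescoping the individual split errors yields
\[
\psi(R,\la) = N\,\psi_{\mathbf n}(\la) + E_{\mathbf n}(R,\la),
\]
where $E_{\mathbf n}$ is a sum over the $N-1$ internal nodes of the $d$-dimensional binary bisection tree. Choose $n_k$ so that base rectangles have side lengths of order unity, which forces $n_1+\dots+n_d = O\bigl(\log(r_1\dots r_d)\bigr)$. On base-scale rectangles, centering together with local exponential integrability gives $\psi_{\mathbf n}(\la)=c\la^2+o(\la^2)$ as $\la\to 0$, so $N\,\psi_{\mathbf n}(\la)/(\la^2|R|)\to c$. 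The accumulated error $E_{\mathbf n}$ admits a bound whose dependence on the hierarchy depth grows at worst polynomially of order $d$ in $\log(r_1\dots r_d)$, and the hypothesis $\la\log^d(r_1\dots r_d)\to 0$ is precisely tuned to make $E_{\mathbf n}/(\la^2|R|)\to 0$. To identify the limit, apply the decomposition to two admissible sequences and reduce both to a common fine base scale; stationarity then matches the two contributions and shows the rescaled logarithms agree up to $o(1)$.

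The main obstacle is the error bookkeeping across the hierarchy. Each of the $d$ coordinate axes contributes its own bisection, and the splittability errors at successive levels must be composed in an order sensitive to the interleaving of axes; showing that the composed bound remains of the claimed $\log^d$ type requires iterating the one-split estimate of \cite[Def.~1.4]{II} uniformly in scale and in the split direction. This is the $d$-dimensional counterpart of the central technical step of \cite{I}, and the exponent $d$ in the hypothesis faithfully reflects the $d$-fold interleaving of the binary bisection trees.
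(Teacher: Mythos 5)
Your outline captures the general hierarchical-bisection flavour, but two of its load-bearing steps do not survive contact with what splittability actually provides. First, the single-split estimate is not of the additive form $\psi(R,\la)=\psi(R^-,\la)+\psi(R^+,\la)+\eta$ with a small $\eta$. What \cite[Lemmas 2.4, 2.10]{II} give (and what the paper isolates as Lemma~\ref{2.1}) are H\"older-type one-sided bounds with a free exponent $p>1$: the cumulant generating function of the whole box is compared to a \emph{reweighted} copy of the half-box value at a \emph{rescaled} argument (e.g.\ $f_B(\la)\le\frac2p f_{B/2}(p\la/\sqrt2)+C_1\frac p{p-1}\frac{\la^2}{R(\vol B)}$), valid only on a $\la$-interval that itself depends on $p$ and on $\vol B$, with the error governed by a $(d-1)$-dimensional boundary field (whence the $\log^{d-1}S(\vol B)$ factors). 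The heart of the proof is precisely the bookkeeping you wave at: choosing $p_k\to1$ along the hierarchy, propagating the quadratic coefficients $a_k$, and tracking the shrinking admissible intervals $[-\De_k,\De_k]$ so that a nontrivial $\la$-range survives all $n$ steps (Propositions~\ref{3.3}--\ref{3.11} and Section~\ref{sect4}); none of this is supplied or replaceable by ``telescoping the individual split errors.''

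Second, and fatally, you stop the bisection at base rectangles of side length of order unity and identify the limit with the coefficient $c$ of the unit-scale expansion $\psi_{\mathbf n}(\la)=c\la^2+o(\la^2)$. The per-split error, measured against $\la^2\vol$, behaves like an inverse power of the current box size (in the paper, the increments $x_k=\sqrt{C_1/R(\vol(B/2^k))}$), so the near-independence degrades as boxes shrink; at unit scale the boundary is comparable to the bulk, the accumulated error is $O(1)\cdot\la^2|R|$ rather than $o(\la^2|R|)$, and your claim $E_{\mathbf n}/(\la^2|R|)\to0$ fails. Indeed it must fail: your $c$ is essentially half the normalized variance of $\int_{[0,1]^d}X_t\,\D t$, which is \emph{not} $\si^2/2$ --- the cross-covariances between neighbouring cells contribute at leading order and would have to be carried by exactly the error terms you discard. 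The paper instead halts the hierarchy at a large mesoscale $V\ge V_\eps$ (Prop.~\ref{eight}), so the candidate constant is the common limit of quadratic coefficients of \emph{large} boxes; establishing that this limit exists and is independent of the base box requires the separate arguments of Section~\ref{sect5} (dyadic consistency via Lemma~\ref{5.1}, the sup/inf bounds $L,U$, measurability, and the additivity argument showing $\si_B$ is shape-independent), and a further enlargement of the admissible $\la$-range from $\sqrt{S(\vol B)}\log^{-(d-1)}S(\vol B)$ to $\sqrt{\vol B}\log^{-d}\vol B$ (Sections~\ref{sect4} and~\ref{sect6}) before the stated limit with the condition $\la\log^d(r_1\dots r_d)\to0$ follows. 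As written, your proposal neither controls the errors nor identifies the correct limit.
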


That is, for every $ \eps $ there exist $ R $ and $ \de $ such that the
given expression is \close{\eps} to the limit for all $
r_1,\dots,r_d \in [R,\infty) $ and all $ \la \ne 0 $ such that $
|\la| \log^d (r_1\dots r_d) \le \de $.\,\footnote{%
 Of course, $ \log^n x $ means $ (\log x)^n $.}

We denote this limit by $ \si^2/2 $, $ \si \in [0,\infty) $.

\begin{corollary}[\emph{moderate deviations}]\label{1.2}
Let $ X $ and $ \si $ be as above, and $ \si \ne 0 $. Then
\[
\lim_{\textstyle\myatop
 { r_1,\dots,r_d\to\infty, c\to\infty }
 { \left( c\log^d (r_1,\dots,r_d) \right)^2 / (r_1\dots r_d) \to 0 }
}
\frac1{c^2} \log \PR{ \int_{[0,r_1]\times\dots\times[0,r_d]} X_t \, \D
t \ge c\si \sqrt{r_1\dots r_d} } = -\frac12 \, .
\]
\end{corollary}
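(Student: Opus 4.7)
The plan is to derive matching exponential upper and lower bounds on $\mathbb P(S \ge A)$, where $S := \int_{[0,r_1]\times\cdots\times[0,r_d]} X_t\,\D t$, $A := c\si\sqrt V$, and $V := r_1\cdots r_d$, via Chernoff's inequality and exponential tilting, feeding Theorem~\ref{theorem1} into each step. The hypothesis $(c\log^d V)^2/V \to 0$ of the corollary ensures that Theorem~\ref{theorem1} applies at parameters $\la = O(c/\sqrt V)$, uniformly enough to allow a small multiplicative perturbation.

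For the upper bound I would apply Markov's exponential inequality with the optimal tilt $\la_\star := c/(\si\sqrt V)$. In the given regime $V\la_\star^2 = c^2/\si^2 \to \infty$ and $|\la_\star|\log^d V \to 0$, so Theorem~\ref{theorem1} gives $\log \Ex \exp(\la_\star S) = c^2/2 + o(c^2)$, while $\la_\star A = c^2$, yielding $c^{-2}\log \mathbb P(S\ge A) \le -1/2 + o(1)$.

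For the lower bound I would fix small $0<\eta<\eps$ (destined to be sent to $0$), set $\la := (1+\eta)\la_\star$, $A_1 := (1+\eps)A$, $Z := \Ex \exp(\la S)$, $\D\mathbb Q/\D\mathbb P := Z^{-1}\exp(\la S)$, and apply the change-of-measure identity
\[
\mathbb P(A \le S \le A_1) \ge Z\exp(-\la A_1)\,\mathbb Q(A \le S \le A_1).
\]
Theorem~\ref{theorem1} gives $\log Z = (1+\eta)^2 c^2/2 + o(c^2)$ and $\la A_1 = (1+\eta)(1+\eps)c^2$, so
\[
c^{-2}\log \mathbb P(S\ge A) \ge \tfrac{(1+\eta)^2}{2} - (1+\eta)(1+\eps) + o(1) + c^{-2}\log \mathbb Q(A\le S\le A_1),
\]
and sending $\eta,\eps\to 0$ recovers $-1/2$ once the $\mathbb Q$-probability is shown to tend to $1$.

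The main obstacle is this last concentration under $\mathbb Q$ at scale $c\sqrt V$, since no direct estimate on $\Var_\mathbb Q S$ is available from Theorem~\ref{theorem1}: only pointwise linear response can be used. I would obtain it by invoking Theorem~\ref{theorem1} a second time at a shifted parameter $\la+\mu$, with $|\mu|$ of order $\la$. Such $\mu$ still satisfy $|\la+\mu|\log^d V \to 0$, and
\[
\log \Ex_\mathbb Q \exp(\mu S) = \log \Ex \exp\bigl((\la+\mu)S\bigr) - \log Z = V\la\mu\si^2 + \tfrac12 V\mu^2\si^2 + o(c^2),
\]
uniformly in $\mu$ over the relevant range, by the uniformity clause of Theorem~\ref{theorem1}. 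Two Markov estimates with $\mu = \pm\,\tau/(V\si^2)$ for suitable $\tau$ of order $c\sqrt V$ then yield
\[
\mathbb Q(S\le A) \le \exp\bigl(-\tfrac{1}{2}\eta^2 c^2 + o(c^2)\bigr), \qquad \mathbb Q(S\ge A_1) \le \exp\bigl(-\tfrac{1}{2}(\eps-\eta)^2 c^2 + o(c^2)\bigr),
\]
both tending to $0$ since $c\to\infty$, which closes the argument.
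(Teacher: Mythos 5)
Your proposal is correct and is essentially the paper's own route: the paper reduces Corollary \ref{1.2} to Lemma \ref{6.2}, whose proof is exactly this G\"artner--Ellis-type argument --- Chernoff with tilt $\approx c/(\si\sqrt{\vol B})$ for the upper bound, and for the lower bound a control of the exponential integral outside a window around $c\si\sqrt{\vol B}$ using the cumulant generating function at shifted parameters $\la\pm\de$, which is your tilted-measure concentration step written without introducing $\mathbb Q$ explicitly. The only differences are cosmetic: the paper works with the normalized variable and non-asymptotic explicit constants (via Prop.~\ref{6.1}, the uniform version of Theorem \ref{theorem1}), whereas you phrase the same estimates asymptotically with $o(c^2)$ errors and parameters $\eta,\eps\to0$.
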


\begin{corollary}\label{1.3}
The distribution of $ (r_1\dots
r_d)^{-1/2} \int_{[0,r_1]\times\dots\times[0,r_d]} X_t \, \D t $
converges (as $ r_1,\dots,r_d \to \infty $) to the normal distribution $
N(0,\si^2) $.
\end{corollary}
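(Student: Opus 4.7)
The plan is to derive Corollary~\ref{1.3} directly from Theorem~\ref{theorem1} by the moment generating function method. Write $V = r_1\cdots r_d$ and $S = \int_{[0,r_1]\times\dots\times[0,r_d]} X_t\,\D t$; the task is to show $S/\sqrt V \Rightarrow N(0,\si^2)$. First I would fix an arbitrary $t \in \R$ and apply Theorem~\ref{theorem1} with $\la = t/\sqrt V$. As $r_1,\dots,r_d \to \infty$ we have $\la \to 0$ and $|\la|\log^d V = |t|\log^d V/\sqrt V \to 0$, so the admissible regime of Theorem~\ref{theorem1} is entered. Since $V\la^2 = t^2$, the theorem gives
\[
 \log \Ex \exp\!\left(\frac{tS}{\sqrt V}\right) \;=\; V\la^2 \cdot \frac{1}{V\la^2}\log \Ex \E^{\la S} \;\longrightarrow\; \frac{\si^2 t^2}{2}.
\]

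Exponentiating, the moment generating function of $S/\sqrt V$ converges pointwise on all of $\R$ to $\exp(\si^2 t^2/2)$, the MGF of $N(0,\si^2)$. Since the limit is the MGF of a genuine probability distribution and pointwise convergence holds on a neighborhood of $0$ (in fact everywhere), Curtiss' continuity theorem for moment generating functions yields $S/\sqrt V \Rightarrow N(0,\si^2)$, which is exactly the assertion.

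The derivation is essentially automatic once Theorem~\ref{theorem1} is granted, so I do not anticipate a serious obstacle. The two items to check are both immediate: the scaling conditions hold because $\log^d V / \sqrt V \to 0$, and both signs of $\la$ are admitted by the clause ``all $\la \ne 0$'' in Theorem~\ref{theorem1}. The uniform closeness to $\si^2/2$ asserted after Theorem~\ref{theorem1} is precisely what supplies the pointwise convergence of the rescaled log-MGF that drives the argument; no further tightness input, truncation, or separate moment bound is needed.
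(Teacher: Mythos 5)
Your argument is correct and is essentially the paper's own route: the paper proves Corollary~\ref{1.3} by exactly this mechanism (plugging $\la=t/\sqrt{r_1\cdots r_d}$ into Theorem~\ref{theorem1}, so that the normalized cumulant generating function tends to $\tfrac12\si^2 t^2$, and invoking MGF-continuity), merely deferring the details to the analogous derivation in part~I and noting that the multi-parameter limit reduces to sequences. The only point worth stating explicitly is that for each fixed $t$ the finiteness of $\Ex\exp\(tS/\sqrt V\)$ is guaranteed only for sufficiently large boxes (namely once $|t|\log^d V/\sqrt V\le\de$ and $r_i\ge R$), which is all that the continuity theorem needs and also covers the degenerate case $\si=0$.
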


For the proofs see Sect.~\ref{sect6}.

\section[Single-step bounds]
   {\raggedright Single-step bounds}
\label{sect2}
\emph{A digression on boxes.}
By a box we mean a set of the form $ B = [0,r_1] \times \dots \times
[0,r_d] \subset \R^d $ where $ r_1,\dots,r_d \in (0,\infty) $. We
denote
\[
\vol B = r_1 \dots r_d \, ; \quad 
\length B = \max ( r_1, \dots, r_d ) \, ; \quad
\width B = \min ( r_1, \dots, r_d ) \, ;
\]
clearly, $ (\width B)^d \le \vol B \le (\length B)^d $. We consider
the longest side of $B$,
\[
\arglength B = \min \{ k : r_k = \length B \}
\]
and halve it, getting another box denoted by $ B / 2 $:
\[
B/2 = [0,s_1] \times\dots\times [0,s_d] \, , \quad \text{where} \quad
  s_k = \begin{cases}
    \frac12 r_k &\text{if } k=\arglength B , \\
    r_k &\text{otherwise} .
    \end{cases}
\]
Note the equality
\begin{equation}\label{2.05}
\width(B/2) = \min \Big( \width B, \frac12 \length B \Big) \, ;
\end{equation}
it will be used in the next section.

\emph{Back to random fields.}
From now on we assume that a splittable CMS random field $X$ on $ \R^d $
is given, and denote
\[
f_B(\la) = \log \Ex \exp \frac{\la}{\sqrt{\vol B}} \int_B X_t \, \D t
\]
for arbitrary box $ B \subset \R^d $ and $ \la \in \R $; note that $
f_B(\la) \in [0,\infty] $ (since
$ \Ex \exp \frac{\la}{\sqrt{\dots}} \int \dots \ge \Ex \(
1+\frac{\la}{\sqrt{\dots}} \int \dots \) = 1 $). Also, we denote for
convenience
\[
R(v) = v^{\frac1d} \quad \text{and} \quad S(v) =
v^{\frac{d-1}d} \quad \text{for } v \in (0,\infty) \, .
\]
Sometimes the case $ d=1 $ needs special attention. By convention, $
x^0 = 1 $ for all $ x \in \R $ (not only for $ x>0 $).

\begin{lemma}\label{2.1}
There exists $ C_1 \in (1,\infty) $ such that for every box $B$
satisfying $ \width B \ge C_1 $ hold

(a) $\displaystyle f_B (\la) \le \frac2p f_{B/2} \Big( \frac{p\la}{\sqrt2} \Big) +
C_1 \frac{p}{p-1} \cdot \frac{\la^2}{R(\vol B)} $\\
whenever $ C_1 |\la| \le \frac{p-1}p \sqrt{\vol B} \log^{-(d-1)}
S(\vol B) $;

(b) $\displaystyle f_B (\la) \ge 2p f_{B/2} \bigg( \frac{\la}{p\sqrt2} \bigg) -
C_1 \frac{1}{p-1} \frac{\la^2}{R(\vol B)} $\\
whenever $ C_1 |\la| \le (p-1) \sqrt{\vol B} \log^{-(d-1)}
S(\vol B) $.
\end{lemma}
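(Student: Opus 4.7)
The plan is to split $B$ across its longest side into two congruent halves $B_1,B_2$, each a translate of $B/2$, and to use the splittability of $X$ from \cite[Def.~1.4]{II} to approximately decouple the integrals on the two halves. I expect splittability to provide a representation (possibly via a coupling) of the form $\int_B X_t \, \D t = W_1 + W_2 + V$, in which $W_1,W_2$ are independent copies of $\int_{B/2} X_t \, \D t$ and $V$ is a ``border'' term supported near the interface between $B_1$ and $B_2$. Given such a decomposition, both (a) and (b) will follow from H\"older's inequality applied with conjugate exponents $p$ and $p/(p-1)$.

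Set $\mu = \la/\sqrt{\vol B}$, so that $(p\mu)\sqrt{\vol(B/2)} = p\la/\sqrt 2$. For~(a) I apply H\"older to the factorisation $e^{\mu \int_B X_t\,\D t} = e^{\mu(W_1+W_2)}\cdot e^{\mu V}$:
\[
\Ex e^{\mu(W_1+W_2+V)} \le \bigl(\Ex e^{p\mu(W_1+W_2)}\bigr)^{1/p} \bigl(\Ex e^{\frac{p}{p-1}\mu V}\bigr)^{\frac{p-1}{p}},
\]
and independence plus equidistribution of $W_1,W_2$ turn the first factor into $\exp\bigl(\tfrac2p f_{B/2}(p\la/\sqrt 2)\bigr)$. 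For~(b) I apply the same H\"older to $\Ex e^{\mu(W_1+W_2)} = \Ex\bigl[e^{\mu \int_B X_t\,\D t}\cdot e^{-\mu V}\bigr]$, then rescale $\mu = \la/(p\sqrt{\vol B})$ and rearrange to isolate $f_B(\la)$ on the left.

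Both manipulations reduce the lemma to a sub-Gaussian moment estimate on the border term,
\[
\log \Ex e^{cV} \le C c^2 S(\vol B) \qquad \text{for } C|c| \le \log^{-(d-1)} S(\vol B).
\]
Taking $c = p\la/\bigl((p-1)\sqrt{\vol B}\bigr)$ in~(a) and $c = \la/\bigl((p-1)\sqrt{\vol B}\bigr)$ in~(b), and invoking the identity $\vol B / S(\vol B) = R(\vol B)$, one recovers precisely the error terms stated in~(a) and~(b) together with their admissible ranges of~$\la$. The main technical difficulty is establishing this quadratic moment bound on~$V$: since the border term is supported on a thin $d$-dimensional strip whose transverse $(d-1)$-dimensional extent is of order $S(\vol B)$, I expect to control it by an induction on dimension, iterating splittability in each of the $d-1$ transverse directions. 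Each such iteration contributes one logarithmic factor, and this is the source of the $\log^{d-1}$ limitation on the admissible range of~$c$, and hence of~$\la$.
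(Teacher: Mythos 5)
Your H\"older bookkeeping is exactly the mechanism behind this lemma, and the arithmetic checks out: with $c=\frac{p}{p-1}\,\la/\sqrt{\vol B}$ in (a) and $c=\la/((p-1)\sqrt{\vol B})$ in (b), a quadratic bound $\log \Ex \E^{cV}\le \const\cdot c^2\,\vol B_0$ on the border term, where $\vol B_0=\vol B/\length B\le S(\vol B)$ and $\length B\ge R(\vol B)$, reproduces both error terms and both admissible ranges of $\la$. But the paper does not re-derive any of this: its proof is a citation of \cite[Lemmas 2.4 and 2.10]{II}, which already package the split along the longest side together with the H\"older step, plus \cite[(1.6)]{II}, which supplies the bound $g_{B_0}(\mu)\le C_1\mu^2$ for $C_1|\mu|\le \sqrt{\vol B_0}\,\log^{-(d-1)}\vol B_0$ on the cumulant generating function of the border term; the only work done here is the conversion to $R(\vol B)$ and $S(\vol B)$ (and noting $\width B_0=\width B\ge C_1$ so that (1.6) applies).

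The genuine gap in your proposal sits exactly where you flag ``the main technical difficulty'': the sub-Gaussian estimate on $V$ in the range $C|c|\le \log^{-(d-1)}S(\vol B)$ is not proved, only ``expected,'' and the dimension induction you sketch is not carried out. In the framework of \cite{II} this estimate is not something to be proved at this point at all --- it is built into the notion of a splittable CMS field via the functions $g$ and \cite[(1.6)]{II} --- so the induction is both missing from your argument and unnecessary for the paper's. A secondary imprecision: splittability does not give a pathwise identity $\int_B X_t\,\D t=W_1+W_2+V$ with $W_1,W_2$ independent copies of $\int_{B/2}X_t\,\D t$ on the original space; what is available (and what the paper quotes, see the displayed inequalities with $g_{B_0}$ in the proof of Lemma \ref{2.1} and later in Section \ref{sect5}) is a coupling-type statement yielding two-sided inequalities between cumulant generating functions with a $g_{B_0}$ correction. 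Granting those inequalities of \cite{II} as input, your reduction is correct and coincides with the cited derivation; as a standalone proof it is incomplete at its central step.
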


(Here ``whenever\ \dots'' means ``for all $ p \in (1,\infty) $ and
$ \la \in \R $ satisfying\ \dots''.)

\begin{proof}
By \cite[Lemma 2.4]{II} (for the constant $C_1$ given
by \cite[(1.6)]{II} and a single index $i$),
\[
f_B (\la) \le \frac2p f_{B/2} \Big( \frac{p\la}{\sqrt2} \Big) +
C_1 \frac{p}{p-1} \cdot \frac{\la^2}{\length B}
\]
whenever $ C_1 |\la| \le \frac{p-1}p \sqrt{\vol
B} \log^{-(d-1)} \frac{\vol B}{\length B} $; inequality (a) follows,
since $ \length B \ge R(\vol B) $ and $ \frac{\vol B}{\length B} \le
S(\vol B) $.

Inequality (b) follows similarly from \cite[Lemma 2.10]{II}, but some
clarifications are needed. There, the second inequality for $ r=s $
says (in our terms) that
\[
f_B (\la) \ge
2p f_{B/2} \bigg( \frac{\la}{p\sqrt2} \bigg) -
(p-1) g_{B_0} \bigg( \frac{1}{p-1} \frac{\la}{\sqrt{\length
B}} \bigg) \, ,
\]
where (see \cite[(1.6)]{II})
\[
g_{B_0} (\mu) \le C_1 \mu^2 \quad \text{for }
C_1 |\mu| \le \frac{ \sqrt{\vol B_0} }{ \log^{d-1} \vol B_0 } \, ;
\]
also $ \vol B_0 = \frac{\vol B}{\length B} \le S(\vol B) $ (and
$ \width B_0 = \width B \ge C_1 $).
\end{proof}

From now on $C_1$ is the constant given by Lemma \ref{2.1}; it depends
on the random field $X$ only. Every larger constant is also a
legitimate $ C_1 $; in Sect.~\ref{sect3} we'll assume that $ C_1 \ge 3
$.

\begin{lemma}\label{2.2}
Let a box $B$ satisfy $ \width B \ge C_1 $; denote $ x
= \sqrt{\frac{C_1}{R(\vol B)}} $.

(a) If numbers $ u,\de \in (0,\infty) $ satisfy
\[
f_{B/2}(\mu) \le u^2 \mu^2 \quad \text{for all } \mu \in [-\de,\de] \,
,
\]
then
\[
f_B(\la) \le (u+x)^2 \la^2 \quad \text{for all } \la \in [-\De,\De] \,
,
\]
where $ \De
= \min \( \frac{\sqrt2}p \de, \frac1{C_1} \frac{p-1}p \sqrt{\vol
B} \log^{-(d-1)} S(\vol B) \) $ and $ p = \frac{u+x}u $;

(b) if numbers $ u \in (x,\infty) $, $ \de \in (0,\infty) $ satisfy
\[
f_{B/2}(\mu) \ge u^2 \mu^2 \quad \text{for all } \mu \in [-\de,\de] \,
,
\]
then
\[
f_B(\la) \ge (u-x)^2 \la^2 \quad \text{for all } \la \in [-\De,\De] \,
,
\]
where $ \De
= \min \( p \de \sqrt2, \frac1{C_1} (p-1) \sqrt{\vol
B} \log^{-(d-1)} S(\vol B) \) $ and $ p = \frac u{u-x} $.
\end{lemma}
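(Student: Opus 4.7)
The plan is to apply Lemma~\ref{2.1} and substitute the quadratic hypothesis into the resulting inequality, then choose $p$ so that the two error contributions combine into the stated clean bound $(u\pm x)^2\la^2$.

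For part (a), I would start from Lemma~\ref{2.1}(a) and bound $f_{B/2}(p\la/\sqrt2)$ using the hypothesis $f_{B/2}(\mu)\le u^2\mu^2$; this step is legitimate provided $|p\la/\sqrt2|\le\de$, which gives the first term $\sqrt2\de/p$ in the definition of $\De$. Substituting yields
\[
f_B(\la) \;\le\; \frac{2}{p}\cdot u^2\cdot\frac{p^2\la^2}{2} \;+\; \frac{p}{p-1}\cdot x^2\la^2 \;=\; \Bigl(pu^2 + \tfrac{p}{p-1}x^2\Bigr)\la^2 ,
\]
using $x^2=C_1/R(\vol B)$. The choice $p=(u+x)/u$, which makes $p-1=x/u$ and hence $p/(p-1)=(u+x)/x$, causes $pu^2=(u+x)u$ and $\frac{p}{p-1}x^2=(u+x)x$; their sum is exactly $(u+x)^2$, as desired. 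The second term in the definition of $\De$ is just the range of validity of Lemma~\ref{2.1}(a), divided by $C_1$.

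Part (b) is analogous, starting from Lemma~\ref{2.1}(b) and the lower hypothesis $f_{B/2}(\mu)\ge u^2\mu^2$. Substituting $\mu=\la/(p\sqrt2)$ (which is legal when $|\la|\le p\sqrt2\,\de$, giving the first component of $\De$) yields
\[
f_B(\la) \;\ge\; \frac{u^2}{p}\la^2 \;-\; \frac{x^2}{p-1}\la^2 .
\]
The dual choice $p=u/(u-x)$, which requires $u>x$ (this is the hypothesis $u\in(x,\infty)$), gives $p-1=x/(u-x)$, so $u^2/p=u(u-x)$ and $x^2/(p-1)=x(u-x)$; their difference is $(u-x)^2$, matching the target. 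The second component of $\De$ is again the Lemma~\ref{2.1}(b) range.

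There is no real obstacle here: the statement is engineered so that the two parts of Lemma~\ref{2.1} become sharp at precisely these optimal values of $p$. The only thing to check carefully is the ranges of $\la$; in both parts one must intersect the range coming from the validity of Lemma~\ref{2.1} with the range where the quadratic hypothesis on $f_{B/2}$ can be invoked after the change of variable. Both ranges are recorded verbatim in the stated definition of $\De$, so the verification is purely mechanical.
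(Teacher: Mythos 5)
Your proposal is correct and follows essentially the same route as the paper: apply Lemma~\ref{2.1}(a)/(b), plug the quadratic hypothesis into the halved box, and observe that the choices $p=(u+x)/u$ and $p=u/(u-x)$ make $pu^2+\tfrac{p}{p-1}x^2=(u+x)^2$ and $\tfrac{u^2}{p}-\tfrac{x^2}{p-1}=(u-x)^2$, with $\De$ recording the two validity ranges. Nothing is missing.
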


\begin{proof}
Item (a). Using Lemma \ref{2.1}(a) and taking into account that $ p>1
$, $ \big| \frac{p\la}{\sqrt2} \big| \le \de $ and $ C_1
|\la| \le \frac{p-1}p \sqrt{\vol B} \log^{-(d-1)} S(\vol B) $ we have
$ f_B(\la) \le \frac2p u^2 \( \frac{p\la}{\sqrt2} \)^2 + \frac{p}{p-1}
x^2 \la^2 = \( u^2 p + x^2 \frac{p}{p-1} \) \la^2 $, and $ u^2 p +
x^2 \frac{p}{p-1} = u^2 \frac{u+x}u + x^2 \frac{u+x}x = (u+x)^2 $.

Item (b). Using Lemma \ref{2.1}(b) and taking into account that $ p>1
$, $ \big| \frac{\la}{p\sqrt2} \big| \le \de $ and $ C_1
|\la| \le (p-1) \sqrt{\vol B} \log^{-(d-1)} S(\vol B) $ we have
$ f_B(\la) \ge 2p u^2 \( \frac{\la}{p\sqrt2} \)^2 - \frac1{p-1}
x^2 \la^2 = \( \frac{u^2}p -\frac{x^2}{p-1} \) \la^2 $, and
$ \frac{u^2}p -\frac{x^2}{p-1} = u^2 \frac{u-x}u - x^2 \frac{u-x}x =
(u-x)^2 $.
\end{proof}

\begin{lemma}\label{2.3}
Let a box $B$ satisfy $ \width B \ge C_1 $; denote
$ x = \frac1{R(\vol B) \log^{d-1} S(\vol B) } $,
$ y = \frac{C_1}{\sqrt{\vol(B/2)}} \log^{d-1} S(\vol B) $.
Let $ \la,\mu \in \R $ satisfy $ \la \mu > 0 $; denote
\[
\al = \frac{\sqrt2}{|\la|} - \frac1{|\mu|} \, , \quad \be = \frac{ f_B(\la) }{
|\la| \sqrt{\vol B} } - \frac{ f_{B/2}(\mu) }{ |\mu| \sqrt{\vol(B/2)}
} \, .
\]

\noindent (a) If $ \al \ge y $, then $ \be \le x $;

\noindent (b) if $ \al \le -y $, then $ \be \ge -x $.
\end{lemma}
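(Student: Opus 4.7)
\emph{Proof plan.} Both parts reduce directly to Lemma~\ref{2.1} by choosing its free parameter $p$ so that the argument of $f_{B/2}$ appearing there matches $\mu$. For part~(a), Lemma~\ref{2.1}(a) involves $f_{B/2}(p\la/\sqrt2)$, so I set $p=\sqrt2\,|\mu|/|\la|$; since $\la\mu>0$, this makes $p\la/\sqrt2=\mu$. For part~(b), Lemma~\ref{2.1}(b) involves $f_{B/2}(\la/(p\sqrt2))$, so I set $p=|\la|/(\sqrt2\,|\mu|)$. A short computation then yields $(p-1)/p=(|\la|/\sqrt2)\,\al$ in part~(a) and $p-1=(|\la|/\sqrt2)(-\al)$ in part~(b); in particular $p>1$ amounts to $\al>0$, resp.\ $\al<0$, which the hypotheses $\al\ge y$ and $\al\le -y$ provide.

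Next I verify that each hypothesis of Lemma~\ref{2.1} is exactly the corresponding hypothesis of Lemma~\ref{2.3}. Using $\sqrt{\vol(B/2)}=\sqrt{\vol B}/\sqrt2$, rewrite $y$ as $\sqrt2\,C_1\log^{d-1}S(\vol B)/\sqrt{\vol B}$; substituting the identities for $(p-1)/p$ and $p-1$ above into the conditions $C_1|\la|\le\frac{p-1}{p}\sqrt{\vol B}\log^{-(d-1)}S(\vol B)$ and $C_1|\la|\le(p-1)\sqrt{\vol B}\log^{-(d-1)}S(\vol B)$ gives $\al\ge y$ and $\al\le -y$ respectively, on the nose. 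Now applying Lemma~\ref{2.1}(a) and dividing through by $|\la|\sqrt{\vol B}$, the key cancellation
\[
\frac{2}{p|\la|\sqrt{\vol B}}=\frac{\sqrt2}{|\mu|\sqrt{\vol B}}=\frac{1}{|\mu|\sqrt{\vol(B/2)}}
\]
converts the $f_{B/2}$-term precisely into the second summand of $\be$. What remains is $\be\le\tfrac{C_1 p}{p-1}\cdot\tfrac{|\la|}{R(\vol B)\sqrt{\vol B}}$, and the hypothesis rewritten as $\tfrac{C_1 p}{p-1}|\la|\le\sqrt{\vol B}\log^{-(d-1)}S(\vol B)$ gives $\be\le x$ at once.

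Part~(b) is completely symmetric: apply Lemma~\ref{2.1}(b), divide by $|\la|\sqrt{\vol B}$, use the analogous cancellation $\tfrac{2p}{|\la|\sqrt{\vol B}}=\tfrac{1}{|\mu|\sqrt{\vol(B/2)}}$ to obtain $\be\ge -\tfrac{C_1|\la|}{(p-1)R(\vol B)\sqrt{\vol B}}$, and bound the right side from below by $-x$ using the hypothesis. I expect no real obstacle: the argument is bookkeeping, and the only point requiring attention is that the hypotheses $\al\ge y$ and $\al\le -y$ \emph{match} (not merely imply) the hypotheses of Lemma~\ref{2.1}, which is precisely why a single constant $x$ suffices to absorb the error term.
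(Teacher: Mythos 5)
Your proposal is correct and follows essentially the same route as the paper: the same choice $p=\sqrt2\,|\mu|/|\la|$ (resp.\ $p=|\la|/(\sqrt2\,|\mu|)$), the same application of Lemma~\ref{2.1}(a)/(b), the same cancellation turning the $f_{B/2}$-term into the second summand of $\be$, and the same observation that $\al\ge y$ (resp.\ $\al\le -y$) is exactly the hypothesis of Lemma~\ref{2.1}, so the error term is bounded by $x$. No gaps.
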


\begin{proof} \let\qed\relax
Item (a). We take $ p = \frac{\mu\sqrt2}{\la} $, note that $ p>1 $
(since $ \al \ge y > 0 $), $ \mu = \frac{p\la}{\sqrt2} $, and
$ \frac{p-1}{p|\la|} = \frac1{|\la|} - \frac1{|\mu|\sqrt2}
= \frac{\al}{\sqrt2} \ge \frac{y}{\sqrt2} = \frac{C_1}{\sqrt{\vol
B}} \log^{d-1} S(\vol B) $, that is, $ C_1
|\la| \le \frac{p-1}p \sqrt{\vol B} \log^{-(d-1)} S(\vol B) $. Using
Lemma \ref{2.1}(a),
\begin{multline*}
\be = \frac{ f_B(\la) }{ |\la| \sqrt{\vol B} } - \frac{
 f_{B/2}\(\frac{p\la}{\sqrt2}\)
 }{ \frac{p|\la|}{\sqrt2} \sqrt{\frac{\vol B}{2} } } =
 \frac1{ |\la| \sqrt{\vol B} } \bigg( f_B(\la) - \frac2p
 f_{B/2} \Big(\frac{p\la}{\sqrt2}\Big) \bigg) \le \\
\le \frac1{ |\la| \sqrt{\vol B} }
 C_1 \frac{p}{p-1} \frac{\la^2}{R(\vol B)} = C_1
 |\la| \cdot \frac{p}{p-1} \cdot \frac1{R(\vol B)\sqrt{\vol B}} \le \\
\le \frac{p-1}p \sqrt{\vol B} \log^{-(d-1)} S(\vol
 B) \cdot \frac{p}{p-1} \cdot \frac1{R(\vol B)\sqrt{\vol B}} = x \, .
\end{multline*}

Item (b). We take $ p = \frac{\la}{\mu\sqrt2}$, note that $ p>1 $
(since $ \al \le -y < 0 $), $ \mu = \frac{\la}{p\sqrt2} $, and
$ \frac{p-1}{|\la|} = \frac1{|\mu|\sqrt2} - \frac1{|\la|}
= - \frac{\al}{\sqrt2} \ge \frac{y}{\sqrt2} = \frac{C_1}{\sqrt{\vol
B}} \log^{d-1} S(\vol B) $, that is, $ C_1
|\la| \le (p-1) \sqrt{\vol B} \log^{-(d-1)} S(\vol B) $. Using
Lemma \ref{2.1}(b),
\begin{multline*}
\be = \frac{ f_B(\la) }{ |\la| \sqrt{\vol B} } - \frac{
 f_{B/2}\(\frac{\la}{p\sqrt2}\)
 }{ \frac{|\la|}{p\sqrt2} \sqrt{\frac{\vol B}{2} } } =
 \frac1{ |\la| \sqrt{\vol B} } \bigg( f_B(\la) - 2p
 f_{B/2} \Big(\frac{\la}{p\sqrt2}\Big) \bigg) \ge \\
\ge \frac1{ |\la| \sqrt{\vol B} } \cdot
 (-C_1) \frac1{p-1} \frac{\la^2}{R(\vol B)} = - C_1
 |\la| \cdot \frac1{p-1} \cdot \frac1{R(\vol B)\sqrt{\vol B}} \ge \\
\ge - (p-1) \sqrt{\vol B} \log^{-(d-1)} S(\vol
 B) \cdot \frac1{p-1} \cdot \frac1{R(\vol B)\sqrt{\vol B}} = - x \, .
\quad \rlap{$\qedsymbol$}
\end{multline*}
\end{proof}

\section[Multi-step bounds based on Lemma \ref{2.2}]
  {\raggedright Multi-step bounds based on Lemma \ref{2.2}}
\label{sect3}
\emph{Second digression on boxes.}
We iterate the box-halving operation $ B \mapsto B/2 $, getting $
B \mapsto B/2^n$:
\[
B/2^0 = B \, , \quad \text{and} \quad B/2^n = (B/2^{n-1})/2 \;\> \text{ for
} n=1,2,\dots \, ;
\]
clearly, $ \vol(B/2^n) = 2^{-n} \vol B $.

\begin{lemma}\label{3.1}
If a box $ B \subset \R^d $ and a number $ C>0 $ satisfy $
C \le \width B $, then there exists one and only one $
n \in \{0,1,2,\dots\} $ such that
\[
C \le \width(B/2^n) \le \length(B/2^n) < 2C \, ,
\]
and this $n$ satisfies $ 2^{-d} \vol B < C^d 2^n \le \vol B $.
\end{lemma}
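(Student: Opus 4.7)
The plan is to identify the claimed $n$ as the smallest nonnegative integer for which $\length(B/2^n) < 2C$, verify by induction that $\width(B/2^n) \ge C$ up to and including this value, show the two inequalities both fail outside this index, and finally derive the volume bounds from $(\width B')^d \le \vol B' \le (\length B')^d$ applied to $B' = B/2^n$.

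First I would observe that each application of $B \mapsto B/2$ halves one coordinate, and since the operation always targets the currently longest side, every coordinate is halved infinitely often along the iteration; consequently $\length(B/2^n) \to 0$. Hence the set of $n$ with $\length(B/2^n) < 2C$ is nonempty; call its minimum $n^*$.

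Next I would show $\width(B/2^n) \ge C$ for every $n \le n^*$ by induction on $n$. The base case $n=0$ is the hypothesis $C \le \width B$. For the inductive step I would invoke \eqref{2.05},
\[
\width(B/2^{n+1}) = \min\(\width(B/2^n),\tfrac12\length(B/2^n)\),
\]
and use that $\length(B/2^n) \ge 2C$ for $n < n^*$, so both arguments of $\min$ are $\ge C$. Setting $n = n^*$ then gives $C \le \width(B/2^{n^*}) \le \length(B/2^{n^*}) < 2C$, establishing existence.

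For uniqueness, for $n < n^*$ the right inequality $\length(B/2^n) < 2C$ fails by the minimality of $n^*$. For $n > n^*$, the crucial observation is that the longest side of $B/2^{n^*}$ has length strictly less than $2C$, so the definition of $B/2$ as halving that side produces in $B/2^{n^*+1}$ a side of length $< C$; hence $\width(B/2^{n^*+1}) < C$, and since the width is nonincreasing under halving (again by \eqref{2.05}) the left inequality then fails for all $n \ge n^*+1$. Finally, combining $C \le \width(B/2^n) \le \length(B/2^n) < 2C$ with $(\width B')^d \le \vol B' \le (\length B')^d$ and $\vol(B/2^n) = 2^{-n}\vol B$ gives $C^d \le 2^{-n}\vol B < (2C)^d$, which after multiplication by $2^n$ is exactly $2^{-d}\vol B < C^d 2^n \le \vol B$. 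The only mildly subtle point is the uniqueness argument, where one must use the explicit "halve the longest side" rule rather than just the recursion \eqref{2.05}, in order to conclude that at step $n^*+1$ the width strictly drops below $C$.
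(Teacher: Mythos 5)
Your proof is correct and essentially the paper's own argument: both proofs single out the first halving step at which a threshold is crossed, use \eqref{2.05} for the monotonicity/uniqueness part, and read the volume bounds off $(\width B')^d \le \vol B' \le (\length B')^d$ with $\vol(B/2^n)=2^{-n}\vol B$. The only real difference is the anchoring: you take the least $n$ with $\length(B/2^n)<2C$, which requires knowing $\length(B/2^n)\to 0$ (your one-line justification is correct but deserves a short argument, e.g. $\length(B/2^{n+d})\le\tfrac12\length(B/2^n)$), whereas the paper takes the least $n$ with $\width(B/2^{n+1})<C$, for which the needed decay $\width(B/2^n)\le(2^{-n}\vol B)^{1/d}\to 0$ is immediate.
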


\begin{sloppypar}
\begin{proof}
If $n$ fits (that is, $ C \le \width(B/2^n) \le \length(B/2^n) < 2C
$), then, using \eqref{2.05},
$ \width(B/2^{n+1}) \le \frac12 \length(B/2^n) < C $, which shows,
first, that $n$ (if exists) is unique (since $n+1$ and larger numbers
cannot fit), and second, that the least candidate is the least $n$
such that $ \width(B/2^{n+1}) < C $ (it exists, since
$ \width(B/2^n) \le (2^{-n}\vol B)^{1/d} \to 0 $ as $ n\to\infty
$). For this $n$ we have $ \width(B/2^{n+1}) = \frac12 \length(B/2^n)
$ (since $ \width(B/2^n) \ge C > \width(B/2^{n+1})
= \min \( \width(B/2^n), \frac12 \length(B/2^n) \) $ by \eqref{2.05}),
whence $ \length(B/2^n) = 2 \width(B/2^{n+1}) < 2C $, which shows that
this candidate fits. Finally, $ C \le \width(B/2^n) \le
(2^{-n}\vol B)^{1/d} \le \length (B/2^n) < 2C $, thus $ C^d \le
2^{-n} \vol B < (2C)^d $.
\end{proof}
\end{sloppypar}

\begin{corollary}\label{3.2}
If $ \width B \ge C $, then $ \width(B/2^n) \ge C $ for all $n$ such
that $ C^d 2^{n-1} \le 2^{-d} \vol B $ (since such $n$ cannot exceed
the $n$ of \ref{3.1}).
\end{corollary}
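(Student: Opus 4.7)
The corollary is essentially a quantitative restatement of Lemma~\ref{3.1}, and the parenthetical in the statement already indicates the entire plan: compare the given $n$ with the $n^\ast$ produced by Lemma~\ref{3.1}.

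My plan is as follows. First, I would apply Lemma~\ref{3.1} to $B$ and $C$ (the hypothesis $C \le \width B$ is exactly the hypothesis of that lemma) to obtain the unique $n^\ast \in \{0,1,2,\dots\}$ with $C \le \width(B/2^{n^\ast}) \le \length(B/2^{n^\ast}) < 2C$, and, crucially, with $2^{-d}\vol B < C^d\, 2^{n^\ast}$. It then suffices to prove the two claims (i) any $n$ satisfying $C^d\,2^{n-1} \le 2^{-d}\vol B$ obeys $n \le n^\ast$, and (ii) the width $\width(B/2^n)$ is non-increasing in $n$, so that $\width(B/2^n) \ge \width(B/2^{n^\ast}) \ge C$.

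For (i) I would simply chain the two inequalities on $\vol B / C^d$: the hypothesis on $n$ rewrites as $2^{n+d-1} \le \vol B/C^d$, while Lemma~\ref{3.1} gives $\vol B/C^d < 2^{n^\ast + d}$, so $2^{n+d-1} < 2^{n^\ast + d}$ and hence $n \le n^\ast$. For (ii) I would invoke \eqref{2.05}, which yields $\width(B/2) \le \width B$ for every box $B$; iterating this at each halving step shows $\width(B/2^m) \ge \width(B/2^{m+1})$, so widths decrease monotonically along the sequence $B, B/2, B/2^2, \dots$

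Combining (i) and (ii) gives $\width(B/2^n) \ge \width(B/2^{n^\ast}) \ge C$, which is the claim. There is no real obstacle here: the only mildly delicate point is getting the powers of $2$ aligned correctly in step (i), where the factor $2^{-d}$ on the right-hand side of the hypothesis $C^d\,2^{n-1} \le 2^{-d}\vol B$ is exactly what is needed to beat the strict inequality $C^d\,2^{n^\ast} > 2^{-d}\vol B$ from Lemma~\ref{3.1} and force $n \le n^\ast$ rather than only $n \le n^\ast + 1$.
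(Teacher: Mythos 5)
Your proof is correct and follows essentially the same route as the paper, whose entire argument is the parenthetical remark: compare the given $n$ with the $n^\ast$ of Lemma~\ref{3.1} via the chain $C^d 2^{n-1} \le 2^{-d}\vol B < C^d 2^{n^\ast}$, and use that all iterates $B/2^k$ with $k \le n^\ast$ have width at least $C$ (which you justify, correctly, by the monotonicity of width coming from \eqref{2.05}).
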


\emph{Back to random fields.}
We have a splittable CMS random field $X$ on $\R^d$ and the corresponding
functions $ f_B(\la) $ defined in Sect.~\ref{sect2}. We still use $
R(v) $, $ S(v) $ and $ C_1 $ introduced there, but now we assume that
$ C_1 \ge 3 $.

Our goal may be formulated using ``for large enough'' quantifiers as
follows.

\begin{proposition}\label{3.3}
For all $ \eps>0 $, for all $V$ large enough, for all $ \de>0 $, for
all $n$ large enough, for all boxes $ B \subset \R^d $ and numbers $a>0$, if
\begin{equation}\label{3.4}
\begin{gathered}
a \le \frac1\eps \, , \quad 2^n V \le \vol B \le \frac1\eps 2^n V \,
 , \quad \width B \ge C_1 \quad \text{and} \\
f_{B/2^n} (\la) \le a \la^2 \quad \text{for all } \la \in
 [-\de,\de] \, ,
\end{gathered}
\end{equation}
then
\begin{equation}\label{3.5}
\begin{gathered}
f_B (\la) \le (a+\eps) \la^2  \quad \text{for all } \la \in
 [-\De,\De] \, , \quad \text{where} \\
\De = \frac1{C_1 \sqrt{a+\eps}} \sqrt{S(\vol B)} \log^{-(d-1)} S(\vol B) \, .
\end{gathered}
\end{equation}
\end{proposition}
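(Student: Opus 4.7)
The plan is to iterate Lemma~\ref{2.2}(a) exactly $n$ times, lifting the hypothesis on $f_{B/2^n}$ up through $B/2^{n-1}, B/2^{n-2}, \dots, B$. The proof thus splits into three bookkeeping tasks: (i) checking the width condition at every level, (ii) tracking the coefficient $u^{(k)}$ in the quadratic bound $f_{B/2^k}(\la) \le (u^{(k)})^2 \la^2$, and (iii) tracking the range $\de^{(k)}$ on which this bound holds.

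For (i), apply Corollary~\ref{3.2} with $C=C_1$: the hypothesis $\vol B \ge 2^n V$ gives $\width(B/2^k) \ge C_1$ for all $k=0,\dots,n-1$ provided $V \ge 4C_1^d/2^d$, so Lemma~\ref{2.2}(a) is applicable at every step once $V$ is large enough. For (ii), set $u^{(n)} = \sqrt a$ and $x^{(k)} = \sqrt{C_1/R(\vol(B/2^k))}$, so that the $k$-th application of Lemma~\ref{2.2}(a) (with the role of $B$ played by $B/2^k$) yields $u^{(k)} = u^{(k+1)} + x^{(k)}$. Since $\vol(B/2^k) \ge 2^{n-k}V$, the telescoping gives
\[
u^{(0)} - \sqrt a \;=\; \sum_{k=0}^{n-1} x^{(k)} \;\le\; \sqrt{C_1}\, V^{-1/(2d)} \sum_{j=1}^{\infty} 2^{-j/(2d)} \, ;
\]
taking $V$ large (the bound on the tail is uniform in $n$ and in $a \le 1/\eps$) we obtain $u^{(0)} \le \sqrt{a+\eps}$, which is the sought-for coefficient bound.

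For (iii), set $\de^{(n)} = \de$ and $\de^{(k)} = \De^{(k)}$, the $\De$ produced by Lemma~\ref{2.2}(a) at step $k$. Note the identity $(p^{(k)}-1)/p^{(k)} = x^{(k)}/u^{(k)}$, and the algebraic simplification $\vol/R = S$, which turns the Lemma~\ref{2.1}-type constraint at level $k$ into
\[
T^{(k)} \;=\; \frac1{u^{(k)}\sqrt{C_1}}\, \sqrt{S(\vol(B/2^k))}\, \log^{-(d-1)} S(\vol(B/2^k)) \, .
\]
Unfolding the recursion, $\de^{(0)}$ is the minimum of $(2^{n/2}\sqrt a / u^{(0)})\,\de$ and the quantities $(2^{j/2} u^{(j)}/u^{(0)})\,T^{(j)}$ for $j=0,\dots,n-1$. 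Using $S(\vol(B/2^j)) = 2^{-j(d-1)/d} S(\vol B)$, the $j$-th such term reduces to
\[
\frac{2^{j/(2d)}}{u^{(0)}\sqrt{C_1}}\, \sqrt{S(\vol B)}\, \log^{-(d-1)} S(\vol(B/2^j)) \, ,
\]
which is increasing in $j$ (both the power of $2$ and the inverse log factor grow with $j$), so the minimum over the $T$-terms is attained at $j=0$. Combined with $u^{(0)} \le \sqrt{a+\eps}$ and $\sqrt{C_1} \le C_1$ (using $C_1 \ge 3$), this yields $T^{(0)} \ge \De$ with $\De$ as in \eqref{3.5}. The remaining $\de$-term is at least $(2^{n/2}\sqrt a / \sqrt{a+\eps})\,\de$, which grows like $2^{n/2}$ whereas $\De$ grows only like $2^{n(d-1)/(2d)}$; hence for $n$ large (depending on $\de$, $\eps$, $V$) this term too exceeds $\De$.

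The main obstacle is the bookkeeping in (iii): each iteration imposes both a multiplicative constraint (propagating $\de^{(k+1)}$) and an absolute one (from $T^{(k)}$), and one must verify that the binding absolute constraint among all levels is exactly the final-step constraint $T^{(0)}$, and that it is strong enough to absorb the $\De$ stipulated by the proposition. The coefficient side (ii) is easier once one recognizes the geometric series, and (i) is a direct application of Corollary~\ref{3.2}.
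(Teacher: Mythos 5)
Your scheme (i)--(ii) and the recursion in (iii) are sound, and in fact your treatment of the absolute constraints is cleaner than the paper's: keeping the exact constraint $T^{(k)}$, noticing that the factor $u^{(j)}$ cancels in $2^{j/2}\,(u^{(j)}/u^{(0)})\,T^{(j)}$, and using monotonicity in $j$ to reduce everything to $T^{(0)}\ge\De$ avoids the paper's auxiliary quantities $M_k$, its two-regime choice of $\De_k$, and its log-convexity argument. But there is a genuine gap in the last step of (iii), and it is exactly the point the paper addresses by its Lemma~\ref{3.6}. The propagated $\de$-term is $2^{n/2}\,(\sqrt a/u^{(0)})\,\de$, and after cancelling $\sqrt{a+\eps}$ against the one hidden in $\De$, the requirement ``$\de$-term $\ge\De$'' becomes essentially $2^{n/(2d)}\gtrsim \frac{1}{C_1\de\sqrt a}\,(V/\eps)^{(d-1)/(2d)}$. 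The needed $n$ therefore depends on $a$ and blows up as $a\to0^+$, whereas the quantifier order in Prop.~\ref{3.3} ($\forall\de\;\forall^\infty n\;\forall B\;\forall a$) forces $n$ to be chosen before, hence independently of, $a$. This is not merely a lossy estimate: with $u^{(n)}=\sqrt a$ tiny, $p^{(n-1)}\approx x^{(n-1)}/\sqrt a$ is huge, so the range actually propagated from level $n$ really does collapse; your parenthetical ``uniform in $a\le1/\eps$'' is correct only for the coefficient bound in (ii), not for the range bound.

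The fix is the paper's preliminary reduction (Lemma~\ref{3.6}): replace $(\eps,a)$ by $(\eps/2,\,a+\eps/2)$, i.e.\ run your recursion from $u^{(n)}=\sqrt{a+\eps/2}$ instead of $\sqrt a$ (the hypothesis $f_{B/2^n}(\la)\le a\la^2$ trivially gives the inflated bound, and the target coefficient $a+\eps$ and the $\De$ of \eqref{3.5}, which involves $\sqrt{a+\eps}$, are unchanged). With the base coefficient bounded below by $\sqrt{\eps/2}$ the ratios $p^{(k)}$ stay near $1$, the $\de$-term comparison reads $2^{n/(2d)}\gtrsim\const(\de,V,\eps,d,C_1)$, and your argument then goes through with $n$ chosen independently of $a$ and $B$, as required.
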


Symbolically:
\[
\forall \eps>0 \;\, \forall^\infty V \;\, \forall \de>0 \;\, \forall^\infty
n \;\, \forall B \;\, \forall
a>0 \;\> \( \eqref{3.4} \impl \eqref{3.5} \) \, .
\]

In other words, the implication ``\eqref{3.4} \imp \eqref{3.5}''
holds \emph{ultimately} in the following sense: it admits a sufficient
condition of the form\footnote{%
 This is the meaning of the word ``ultimately'' throughout this
 section (and only in this section). Existence of functions $
 V_0(\cdot) $, $ N(\cdot) $ will be proved; their regularity
 properties (like continuity, monotonicity etc.) are irrelevant (but
 hold).}
\[
V > V_0(\eps,d,C_1) \, , \quad
n \ge N(\de,V,\eps,d,C_1) \, .
\]

To dispel any doubts, here is a more traditional formulation.

\begin{customprop}{\ref*{3.3}a}\label{eight}
For every $ \eps > 0 $ there exists $ V_\eps $ such that for all $
V \in [V_\eps,\infty) $ and $ \de>0 $ there exists natural $N$ such
that \eqref{3.5} holds for all $ n \in \{N,N+1,\dots\} $, $ a \in
(0,\infty) $ and boxes $B$ satisfying \eqref{3.4}.
\end{customprop}

Note that $X$ is given, thus $ V_\eps $ and $ N $ may depend on $ d $
and $ C_1 $.

\begin{lemma}\label{3.6}
It is sufficient to prove Prop.~\ref*{3.3}a assuming in addition $
a \ge \eps $.
\end{lemma}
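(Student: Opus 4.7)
The plan is a straightforward ``take $\eps/2$'' reduction: when the given coefficient $a$ falls below $\eps$, I would replace it by a slightly larger value that satisfies the extra hypothesis, and absorb the discrepancy into the $\eps$\nobreakdash-slack of the conclusion.

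Concretely, suppose Prop.~\ref*{3.3}a has been established under the extra assumption $a \ge \eps$, and fix $\eps > 0$. I would apply the restricted version with the reduced tolerance $\eps' := \eps/2$, and simply inherit its thresholds, declaring $V_\eps := V_{\eps'}^{\mathrm{restr.}}$ and $N(\de,V,\eps) := N^{\mathrm{restr.}}(\de,V,\eps')$ for each $V \ge V_\eps$ and $\de > 0$. Given any $B, n, \de, a$ satisfying \eqref{3.4}, put $a' := \max(a, \eps')$. Monotonicity in $a$ gives $f_{B/2^n}(\la) \le a\la^2 \le a'\la^2$ on $[-\de,\de]$; by construction $a' \ge \eps'$; and $a \le 1/\eps$ together with $\eps' = \eps/2$ yields $a' \le 1/\eps'$ (restricting to small $\eps$, which is the only interesting regime). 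The restricted proposition therefore applies with $(\eps',a')$ in place of $(\eps,a)$ and delivers $f_B(\la) \le (a'+\eps')\la^2$ on $[-\De',\De']$, where $\De' := (C_1\sqrt{a'+\eps'})^{-1}\sqrt{S(\vol B)}\log^{-(d-1)} S(\vol B)$.

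To conclude, I would check the one-line inequality $a' + \eps' \le a + \eps$ case-by-case: if $a' = a$, then $a' + \eps' = a + \eps/2 \le a + \eps$; if $a' = \eps/2 \ge a$, then $a' + \eps' = \eps \le a + \eps$. This does two things simultaneously: it upgrades the coefficient bound to $(a'+\eps')\la^2 \le (a+\eps)\la^2$, and, because $\De$ is decreasing in its coefficient, it enlarges the admissible range, $\De' \ge \De$. Thus \eqref{3.5} holds with the original parameters $(a,\eps,\De)$, as required.

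No step presents a genuine obstacle --- the argument is essentially bookkeeping. The only subtlety worth being explicit about is that shrinking the coefficient from $a'+\eps'$ down to $a+\eps$ \emph{enlarges} the range of validity rather than shrinking it, so both the amplitude bound and the range constraint of \eqref{3.5} transfer to the original $(\eps,a)$ in one stroke, with no separate range argument needed.
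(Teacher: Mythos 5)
Your argument is correct and is essentially the paper's own: the paper also halves $\eps$ (taking $\ti\eps=\eps/2$) and bumps $a$ up (to $\ti a=a+\eps/2$, so that $\ti a+\ti\eps=a+\eps$ exactly and no separate range comparison is needed), whereas your choice $a'=\max(a,\eps/2)$ differs only in the trivial final inequality $a'+\eps'\le a+\eps$ and the observation that a smaller coefficient enlarges $\De$. The small points you gloss over (that $\vol B\le\frac1\eps 2^nV$ implies $\vol B\le\frac1{\eps'}2^nV$, and the WLOG restriction to small $\eps$) are handled in the paper at the same level of brevity, so there is no real gap.
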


\begin{proof}
WLOG, $ \eps\le1 $. We consider formulas (\ref*{3.4}$\,'$),
(\ref*{3.5}$\,'$) obtained from \eqref{3.4}, \eqref{3.5} by replacing
$ \eps,a $ with $ \ti\eps = \frac\eps2 $, $ \ti a = a + \frac\eps2
$. We have \eqref{3.4} \imp (\ref*{3.4}$\,'$), since $ \ti\eps \le \eps
$, $ \ti a \ge a $ and $ \ti\eps \ti a = \frac{\eps a}2
+ \frac{\eps^2}4 \le \frac12 + \frac14 \le 1 $. Also,
(\ref*{3.5}$\,'$) \imp \eqref{3.5}, since $ \ti a + \ti\eps = a+\eps
$. By assumption, (\ref*{3.4}$\,'$) \imp (\ref*{3.5}$\,'$) (since $ \ti
a \ge \ti\eps $), and we get \eqref{3.4} \imp \eqref{3.5}.
\end{proof}

\[
\includegraphics[scale=1]{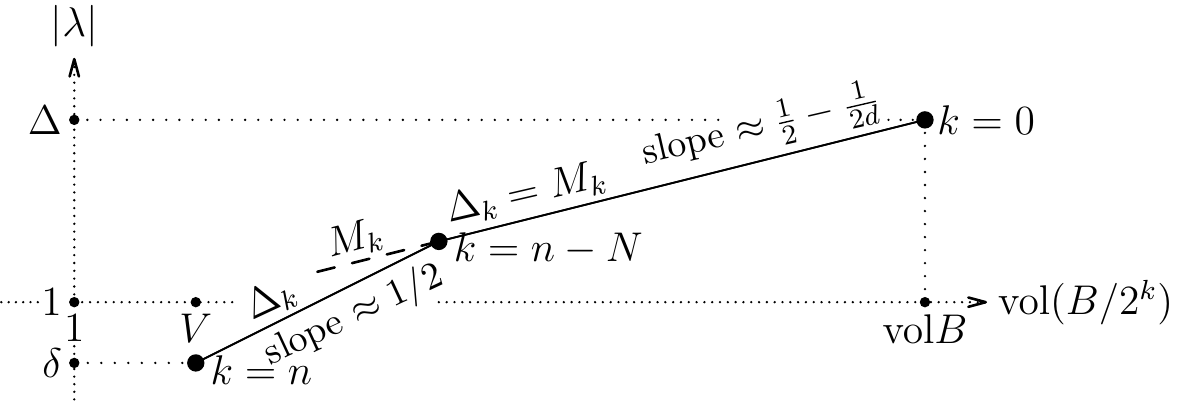}
\]
\begin{center}A hint to the proof of Prop.~\ref*{3.3}a (log--log plot).\end{center}

\begin{proof}[Proof of Prop.~\ref*{3.3}a]
According to Lemma \ref{3.6}, we assume $ a \ge \eps $.

Given $ \eps $, $ V $, $ \de $, $ n $, $ B $ and $ a $
satisfying \eqref{3.4}, we note that ultimately
$ \width(B/2^{n-1}) \ge C_1 $ by Corollary \ref{3.2} (since $
2^{-d} \vol B \ge 2^{-d} \cdot 2^n V > 2^{n-2}
C_1^d $ ultimately), thus Lemma \ref{2.2}(a) may be
applied to $ B/2^k $ for each $ k=0,\dots,n-1 $ provided that numbers
$u_k$ and $\de_k$ are given. In order to prove by induction in $ k =
n-1,n-2,\dots,0 $ the inequality
\begin{equation}\label{3.6a}
f_{B/2^k} (\la) \le a_k \la^2  \quad \text{for all } \la \in
 [-\De_k,\De_k] 
\end{equation}
we need $ a_0,\dots,a_n \ge 0 $ and $ \De_0,\dots,\De_n \ge 0 $ such
that for all $ k=0,\dots,n-1 $
\begin{gather}
\sqrt{a_k} \ge \sqrt{a_{k+1}} + x_k \, , \label{3.7} \\
\De_k \le  \min \bigg( \frac{\sqrt2}{p_k} \De_{k+1}, \frac1{C_1}
 \frac{p_k-1}{p_k} \sqrt{\vol(B/2^k)} \log^{-(d-1)}
 S(\vol(B/2^k)) \bigg) \label{3.8}
\end{gather}
where $ p_k = \frac{\sqrt{a_{k+1}}+x_k}{\sqrt{a_{k+1}}} $ and $ x_k
= \sqrt{\frac{C_1}{R(\vol(B/2^k))}} $; we also need $ a_n \ge a $,
$ \De_n \le \de $ in order to get \eqref{3.6a} for $ k=n $
from \eqref{3.4}; and $ a_0 \le a+\eps $,
$ \De_0 \ge \De $ in order to get \eqref{3.5} from \eqref{3.6a} for
$k=0$.

We take $ \sqrt{a_k} = \sqrt a + \sqrt{ \frac{C_1}{R(\vol(B/2^n))}
} \sum_{i=1}^{n-k} 2^{-\frac{i}{2d}} $ (thus $ \sqrt{a_k}
= \sqrt{a_{k+1}}+x_k $, since $ x_k = 2^{-(n-k)/(2d)} x_n $) and note
that $ \sqrt{a_0} \le \sqrt a + \sqrt{ \frac{C_1}{V^{1/d}} }
\sum_{i=1}^\infty 2^{-\frac{i}{2d}} $ (since $ R(\vol(B/2^n)) \ge
R(V) = V^{1/d} $). Ultimately $ \sqrt{ \frac{C_1}{V^{1/d}} } \sum_{i=1}^\infty
2^{-\frac{i}{2d}} \le \sqrt{\eps^{-1}+\eps}-\sqrt{\eps^{-1}} $, thus
$ a_0 \le a+\eps $ (since $ a \le \eps^{-1} $ and therefore
$ \sqrt{\eps^{-1}+\eps}-\sqrt{\eps^{-1}} \le \sqrt{a+\eps} - \sqrt a
$).

Now about $ \De_k $. We introduce $ M_k = \frac1{C_1} \sqrt{ \frac1a
S(\vol(B/2^k)) } \log^{-(d-1)} S(\vol(B/2^k)) $ and want to replace
the condition \eqref{3.8} with a stronger condition
\begin{equation}\label{3.10}
\De_k \le \min \bigg( \frac{\sqrt2}{p_k} \De_{k+1}, M_k \bigg) \, ;
\end{equation}
to this end we prove that \eqref{3.10} is stronger, that is,\\
$ M_k \le \frac1{C_1} \frac{p_k-1}{p_k} \sqrt{\vol(B/2^k)} \log^{-(d-1)}
S(\vol(B/2^k)) $. We rewrite this inequality as $ \sqrt{ \frac1a
S(\vol(B/2^k)) } \le \frac{p_k-1}{p_k} \sqrt{\vol(B/2^k)} $, that is,
\[
\frac{p_k}{p_k-1} \le \sqrt{ a R(\vol(B/2^k)) } \, .
\]
We note that $ \frac{p_k}{p_k-1} = 1 + \frac{\sqrt{a_{k+1}}}{x_k} = 1
+ \sqrt{ \frac{a_{k+1} R(\vol(B/2^k))}{C_1} } \le 1
+ \sqrt{ \frac{(a+\eps) R(\vol(B/2^k))}{C_1} } $ and prove that the
latter does not exceed $ \sqrt{aR(\vol(B/2^k)) } $, that is, $
1 \le \sqrt{aR(\vol(B/2^k))} ( 1 - \sqrt{ \frac{a+\eps}{C_1 a} } )
$. We know that $ a \ge \eps $ and $ C_1 \ge 3 $, thus
$ \frac{a+\eps}{C_1 a} \le \frac2{C_1} \le \frac23 $, and $
aR(\vol(B/2^k)) \ge \eps R(V) $. Ultimately $ \sqrt{ \eps
V^{1/d} } ( 1 - \sqrt{\frac23} ) \ge 1 $, thus \eqref{3.10} is
stronger than \eqref{3.8}.

We note that $ p_k-1 = \frac{x_k}{\sqrt{a_{k+1}}} = \sqrt{
\frac{ C_1 }{ a_{k+1} R(\vol(B/2^k)) } } \le \sqrt{ \frac{C_1}{a}
} \frac1{ \sqrt{ R(2^{n-k}\vol(B/2^n)) } } \le \sqrt{ \frac{C_1}{\eps}
} (2^{n-k}V)^{-\frac1{2d}} \le \sqrt{ \frac{C_1}{\eps V^{1/d}}
} \cdot 2^{-\frac{n-k}{2d}} $.
Ultimately $ \eps V^{1/d} \ge C_1 $, thus $ p_k - 1 \le
2^{-\frac{n-k}{2d}} $. Ultimately $ 2^{-\frac{N+1}{2d}} \le
2^{\frac1{2d}}-1 $ (where $ N = N(\de,V,\eps,d,C_1) $), thus $
M_k \le \frac{ \sqrt2 }{ p_k } M_{k+1} $ for $ k = 0,\dots,n-N-1 $
(since $ \frac{M_k}{M_{k+1}} \le 2^{\frac{d-1}{2d}} $ and $ p_k \le 1
+ 2^{-\frac{n-k}{2d}} \le 1 + 2^{-\frac{N+1}{2d}} \le 2^{\frac1{2d}}
$.)

We choose $ \De_k $ as follows:
\[
\begin{cases}
\De_k = M_k &\text{for } k=0,\dots,n-N;\\
\De_k
= \frac{p_{k-1}}{\sqrt2} \frac{p_{k-2}}{\sqrt2} \dots \frac{p_{n-N}}{\sqrt2}
M_{n-N} &\text{for } k=n-N+1,\dots,n.
\end{cases}
\]
Clearly, $ \De_0 = M_0 \ge \De $. It remains to prove that
$ \De_n \le \de $, and \eqref{3.10}.

\begin{sloppypar}
We note that $ p_{n-N} \dots p_{n-1} \le \prod_{k=1}^N ( 1 +
2^{-\frac{k}{2d}} ) \le \exp \sum_{k=1}^\infty 2^{-\frac{k}{2d}} $ and
(using \eqref{3.4}) $ C_1 M_{n-N} \le \sqrt{ \frac1a
S(\vol(B/2^{n-N})) } \le \sqrt{ \frac1a S(\frac1\eps \frac{2^n
V}{2^{n-N}} ) } $, thus $ \De_n
= \frac{p_{n-1}}{\sqrt2} \dots \frac{p_{n-N}}{\sqrt2} M_{n-N} \le
2^{-N/2} ( \exp \sum_{k=1}^\infty 2^{-\frac{k}{2d}}
) \frac1{C_1} \sqrt{ \frac1a S(\frac{2^N V}\eps) } = 2^{-\frac N{2d} }
( \exp \sum_{k=1}^\infty 2^{-\frac{k}{2d}} ) \frac1{C_1 \sqrt
a} \sqrt{S(V/\eps)} $. Ultimately $ 2^{-\frac N{2d} }
( \exp \sum_{k=1}^\infty 2^{-\frac{k}{2d}}
) \frac1{C_1 \sqrt\eps} \sqrt{S(V/\eps)} \le \de $, thus
$ \De_n \le \de $.

We have $ \De_k \le \frac{\sqrt2}{p_k} \De_{k+1} $ for all $k$ (since
$ \De_k = M_k \le \frac{\sqrt2}{p_k} M_{k+1}
= \frac{\sqrt2}{p_k} \De_{k+1} $ for $ k=0,\dots,n-N-1 $, and
$ \De_{k+1} = \frac{p_k}{\sqrt2} \De_k $ for $k=n-N,\dots,n-1 $).
Thus, in order to prove \eqref{3.10} it is sufficient to check that
$ \De_k \le M_k $ for $ k = n-N+1,\dots,n $. We'll get a bit more:
$ \De_k \le \frac1{C_1} \sqrt{ \frac1a S(\vol(B/2^k)) } \log^{-(d-1)}
S(\vol(B/2^{n-N})) $, that is,
\begin{gather*}
\frac{p_{k-1}}{\sqrt2} \frac{p_{k-2}}{\sqrt2} \dots \frac{p_{n-N}}{\sqrt2} \sqrt{
S(\vol(B/2^{n-N})) } \le \sqrt{ S(\vol(B/2^k)) } \, ; \\
p_{n-N} \dots p_{k-1} \le 2^{ \frac{k-n+N}2 } \cdot 2^{-\frac{d-1}{2d}
(k-n+N)} = 2^{ \frac{k-n+N}{2d} } \, .
\end{gather*}
We note that $ p_k \le p_{k+1} $ for all $k$ (since $ p_k = 1 + \sqrt{
\frac{ C_1 }{ a_{k+1} R(\vol(B/2^k)) } } $ and $ a_k \ge a_{k+1} $),
therefore the product $ p_{n-N} \dots p_{k-1} $ is a logarithmically
convex function of $k$, and we may check the inequality $
p_{n-N} \dots p_{k-1} \le 2^{ \frac{k-n+N}{2d} } $ only for $ k=n-N $
and $ k=n $. For $ k=n-N $ it is just $ 1 \le 1 $. For $ k=n $ we need
$ p_{n-N} \dots p_{n-1} \le 2^{ \frac{N}{2d} } $; it remains to note
that ultimately $ 2^{ \frac{N}{2d} } \ge \exp \sum_{k=1}^\infty
2^{-\frac{k}{2d}} $.
\end{sloppypar}
\end{proof}

Here is the (quite similar) lower bound.

\begin{proposition}\label{3.11}
For all $ \eps>0 $, for all $V$ large enough, for all $ \de>0 $, for
all $n$ large enough, for all boxes $ B \subset \R^d $ and numbers
$a>0$, if
\begin{equation}\label{3.4'}\tag{\ref*{3.4}$\,'$}
\begin{gathered}
a \le \frac1\eps \, , \quad 2^n V \le \vol B \le \frac1\eps 2^n V \,
 , \quad \width B \ge C_1 \quad \text{and} \\
f_{B/2^n} (\la) \ge a \la^2 \quad \text{for all } \la \in
 [-\de,\de] \, ,
\end{gathered}
\end{equation}
then
\begin{equation}\label{3.5'}\tag{\ref*{3.5}$\,'$}
\begin{gathered}
f_B (\la) \ge (a-\eps) \la^2  \quad \text{for all } \la \in
 [-\De,\De] \, , \quad \text{where} \\
\De = \frac1{C_1 \sqrt{a}} \sqrt{S(\vol B)} \log^{-(d-1)} S(\vol B) \, .
\end{gathered}
\end{equation}
\end{proposition}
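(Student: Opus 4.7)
The plan is to mirror the proof of Prop.~\ref*{3.3}a, substituting Lemma~\ref{2.2}(b) for Lemma~\ref{2.2}(a) at every step; the essential difference is that the iterated coefficient $a_k$ now \emph{decreases} as $k$ decreases, so extra care is needed to keep it positive. First I would reduce to $a > \eps$: if $a \le \eps$, then $(a - \eps)\la^2 \le 0 \le f_B(\la)$ and the conclusion is automatic. Then I iterate downward from $k = n$ to $k = 0$, applying Lemma~\ref{2.2}(b) with $u = \sqrt{a_{k+1}}$ and $x = x_k = \sqrt{C_1/R(\vol(B/2^k))}$, so that the invariant $f_{B/2^k}(\la) \ge a_k\la^2$ on $[-\De_k,\De_k]$ propagates with $\sqrt{a_k} = \sqrt{a_{k+1}} - x_k$ and $p_k = \sqrt{a_{k+1}}/\sqrt{a_k}$ (so $p_k - 1 = x_k/\sqrt{a_k}$); the base case is $a_n = a$, $\De_n = \de$.

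The main obstacle is keeping $a_k$ bounded below. Using $x_k = 2^{-(n-k)/(2d)}x_n$, a geometric sum gives $\sqrt{a_0} \ge \sqrt{a} - \sqrt{C_1/V^{1/d}}\sum_{i\ge 1} 2^{-i/(2d)}$. Since $a \in [\eps,1/\eps]$, the gap $\sqrt{a} - \sqrt{a - \eps}$ is bounded below uniformly in $a$ by a positive function of $\eps$ alone (e.g.\ $\eps^{3/2}/2$), so taking $V$ large in terms of $\eps$ forces $\sqrt{a_0} \ge \sqrt{a - \eps}$, i.e.\ $a_0 \ge a - \eps$. This is the only place where the reduction $a > \eps$ is essential; without it the subtractive iteration could drive $a_k$ to or below zero.

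The bookkeeping for $\De_k$ is then essentially routine. Lemma~\ref{2.2}(b) demands $\De_k \le \sqrt{2}\,p_k\De_{k+1}$ together with $\De_k \le \frac{1}{C_1}(p_k - 1)\sqrt{\vol(B/2^k)}\log^{-(d-1)}S(\vol(B/2^k)) = \frac{1}{\sqrt{C_1 a_k}}\sqrt{S(\vol(B/2^k))}\log^{-(d-1)}S(\vol(B/2^k))$. Setting $M_k = \frac{1}{C_1\sqrt{a}}\sqrt{S(\vol(B/2^k))}\log^{-(d-1)}S(\vol(B/2^k))$, the inequality $M_k \le$ (this bound) follows immediately from $a_k \le a$ and $C_1 \ge 3$ --- noticeably simpler than the analogous step in Prop.~\ref*{3.3}a, which required the nontrivial comparison $\sqrt{aR(\vol(B/2^k))} \ge 1 + \sqrt{(a+\eps)R(\vol(B/2^k))/C_1}$. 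I then take $\De_k = M_k$ for $0 \le k \le n - N$ and $\De_{k+1} = \De_k/(\sqrt{2}\,p_k)$ for $n - N \le k \le n - 1$. Verifying the remaining constraints proceeds exactly as in Prop.~\ref*{3.3}a: $M_k/M_{k+1} \le 2^{(d-1)/(2d)} < \sqrt{2} \le \sqrt{2}\,p_k$ in the first regime, log-convexity of $\prod p_j$ forces $\De_k \le M_k$ in the second, and $\De_n \le 2^{-N/2}M_{n-N}$ is $\le \de$ ultimately in $N$. Since $\De_0 = M_0$ equals the target $\De$, \eqref{3.5'} follows.
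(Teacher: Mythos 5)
Your argument is correct and is essentially the paper's own proof: reduce to $a$ above $\eps$, iterate Lemma~\ref{2.2}(b) with $\sqrt{a_k}=\sqrt{a_{k+1}}-x_k$ (so that $a_0\ge a-\eps$ once $V$ is large in terms of $\eps$), and control the intervals through the same quantities $M_k$, using $M_k\le\frac1{\sqrt{C_1a_k}}\sqrt{S(\vol(B/2^k))}\log^{-(d-1)}S(\vol(B/2^k))$ because $a_k\le a$. The only cosmetic difference is that the paper recycles the $\De_k$ from the proof of Prop.~\ref*{3.3}a (built with the old $p_k$), while you rebuild them with the new $p_k$; since the recursion now only requires $\De_k\le\sqrt2\,p_k\De_{k+1}$ with $p_k\ge1$, your choice makes the checks $\De_k\le M_k$ and $\De_n\le\de$ even easier, and in particular the log-convexity step you mention is not actually needed.
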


\begin{proof}
We assume $ a \ge \eps $ (otherwise the conclusion is void). We apply
Lemma \ref{2.2}(b) similarly to the proof of Prop. \ref*{3.3}a; the
logic is the same, but formulas are a bit different:
\begin{gather*}
f_{B/2^k} (\la) \ge a_k \la^2  \quad \text{for all } \la \in
 [-\De_k,\De_k] 
\, , \tag{\ref*{3.6a}$\,'$}\label{3.6a'} \\
\sqrt{a_k} \le \sqrt{a_{k+1}} - x_k \, , \tag{\ref*{3.7}$\,'$}\label{3.7'} \\
\!\!\!\!
\De_k \le  \min \!\bigg(\! p_k \sqrt2 \De_{k+1}, \frac1{C_1}
 (p_k-1) \sqrt{\vol(B/2^k)} \log^{-(d-1)}
 S(\vol(B/2^k)) \!\bigg)\! \!\!\!\!
 \tag{\ref*{3.8}$\,'$}\label{3.8'}
\end{gather*}
where $ p_k = \frac{\sqrt{a_{k+1}}}{\sqrt{a_{k+1}}-x_k} $ (and $x_k$
as before); we also need $ a_n \le a $, $ \De_n \le \de $; and $
a_0 \ge a-\eps $, $ \De_0 \ge \De $.

We take $ \sqrt{a_k} = \sqrt a - \sqrt{ \frac{C_1}{R(\vol(B/2^n))}
} \sum_{i=1}^{n-k} 2^{-\frac{i}{2d}} $ and note that
$ \sqrt{a_0} \ge \sqrt a - \sqrt{ \frac{C_1}{V^{1/d}}
} \sum_{i=1}^\infty 2^{-\frac{i}{2d}} $. Ultimately
$ \sqrt{ \frac{C_1}{V^{1/d}} } \sum_{i=1}^\infty
2^{-\frac{i}{2d}} \le \sqrt{\eps^{-1}}-\sqrt{\eps^{-1}-\eps} $, thus
$ a_0 \ge a-\eps $.

Now about $ \De_k $. We borrow $ M_k $ and $ \De_k $ from the proof of
Prop. \ref*{3.3}a (``as is'', without replacing $p_k$ used there with
$p_k$ used here). We want to replace the condition \eqref{3.8'} with a
stronger condition
\begin{equation}\tag{\ref*{3.10}$\,'$}\label{3.10'}
\De_k \le \min \bigg( p_k \sqrt2 \De_{k+1}, M_k \bigg) \, ;
\end{equation}
to this end we prove that $ M_k \le \frac1{C_1}
(p_k-1) \sqrt{\vol(B/2^k)} \log^{-(d-1)} S(\vol(B/2^k)) $, that is,
\[
\frac1{p_k-1} \le \sqrt{ a R(\vol(B/2^k)) } \, .
\]
We have $ \frac1{p_k-1} = -1 + \sqrt{ \frac{a_{k+1}
R(\vol(B/2^k))}{C_1} } \le \sqrt{ a R(\vol(B/2^k)) } $, since $
a_{k+1} \le a $ (and $ C_1 \ge 1 $). Thus, \eqref{3.10'} is stronger
than \eqref{3.8'} (this time, no need to bother about $ V $).

As was seen in the proof of Prop. \ref*{3.3}a, $ \De_n \le \de $,
$ \De_0 \ge \De $ (now $ \De = \frac1{C_1 \sqrt a} \dots $ rather than
$ \frac1{C_1 \sqrt{a+\eps}} \dots $ since Lemma \ref{3.6} is now
irrelevant), and $ \De_k \le M_k $; also
$ \De_k \le \frac{ \sqrt2 }{ \ti p_k } \De_{k+1} $ where $ \ti p_k $
is the $ p_k $ used there. Anyway, $ \ti p_k \ge 1 $ and $ p_k \ge 1
$, thus $ \De_k \le \frac{ \sqrt2 }{ \ti p_k
} \De_{k+1} \le \sqrt2 \De_{k+1} \le p_k \sqrt2 \De_{k+1} $, which
gives \eqref{3.10'}.
\end{proof}

\section[Multi-step bounds based on Lemma \ref{2.3}]
  {Multi-step bounds based on Lemma \ref{2.3}}
\label{sect4}
The constant $ C_1 $, the functions $ R(\cdot) $, $ S(\cdot) $ and the
random field $X$ are the same as in Sect.~\ref{sect3}.

\begin{proposition}\label{4.1}
For all $ \eps>0 $ and $ W \ge C_1 $ there exists $ C>0 $ such that
for all boxes $ B \subset \R^d $ and numbers $\la\in\R$ such that
\begin{gather}
\vol B \ge C \, , \quad \width B \ge W \, , \quad
 \text{and} \label{4.2a} \\
0 < C |\la| \le \sqrt{\vol B} \log^{-d} \vol B \label{4.3a}
\end{gather}
there exist $n\in\{0,1,2,\dots\}$ and $\mu\in\R$ such that
\begin{gather}
\width(B/2^n) \ge W \, , \quad
\frac1{\la^2} f_B(\la) \le (1+\eps) \frac1{\mu^2} f_{B/2^n}(\mu) +
 \eps \, , \quad \text{and} \label{4.4} \\
|\mu| \le \eps \sqrt{S(\vol(B/2^n))} \log^{-(d-1)} \vol(B/2^n) \,
 . \label{4.5}
\end{gather}
\end{proposition}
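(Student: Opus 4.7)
The plan is to iterate Lemma~\ref{2.3}(a) along the halving chain $B_k := B/2^k$ (put $v_k := \vol B_k$). WLOG $\lambda > 0$. Starting from $\lambda_0 = \lambda$, I define $\lambda_{k+1}$ so as to saturate the hypothesis $\alpha_k = y_k$ of Lemma~\ref{2.3}(a),
\[
\lambda_{k+1} = \frac{\lambda_k}{\sqrt 2 - y_k\lambda_k}, \qquad y_k = \frac{C_1 \log^{d-1} S(v_k)}{\sqrt{v_{k+1}}},
\]
and write $p_k := \sqrt 2\,\lambda_{k+1}/\lambda_k = (1 - y_k\lambda_k/\sqrt 2)^{-1} \ge 1$. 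Let $n$ be the smallest index with $\lambda_n \le \eps\sqrt{S(v_n)}\log^{-(d-1)}v_n$, and set $\mu := \lambda_n$; the case $n = 0$ is trivial.

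Chaining Lemma~\ref{2.3}(a) over $k = 0,\dots,n-1$ with $g_k := f_{B_k}(\lambda_k)/(\lambda_k\sqrt{v_k})$ and $x_k := 1/(R(v_k)\log^{d-1} S(v_k))$ gives $g_0 \le g_n + \sum_{k<n} x_k$. With $r_k := \sqrt{v_k}/\lambda_k$ (so $r_0/r_n = \prod_{k<n} p_k$), multiplying by $r_0$ yields
\[
\frac{f_B(\lambda)}{\lambda^2} \le \Bigl(\prod_{k<n} p_k\Bigr)\frac{f_{B_n}(\mu)}{\mu^2} + r_0\sum_{k<n} x_k,
\]
reducing \eqref{4.4} to (*) $\prod_{k<n} p_k \le 1+\eps$ and (**) $r_0\sum x_k \le \eps$. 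Corollary~\ref{3.2} keeps $\width(B_k) \ge W$ for all $k \le n$, provided $C$ is large enough relative to $W$.

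For (*), I use $-\log(1-t) \le 2t$ on $[0,\tfrac12]$ with $t = y_k\lambda_k/\sqrt 2$ and bootstrap: while the partial product stays $\le 1+\eps$, $\lambda_k \le (1+\eps)\lambda_0/2^{k/2}$, hence $\sum_{k<n} y_k\lambda_k \le \const\cdot(1+\eps)C_1\lambda_0 v_0^{-1/2}\sum_{k<n} \log^{d-1}v_k$; an integral comparison bounds the last sum by $\const\cdot\log^d v_0$, and hypothesis \eqref{4.3a} then makes the whole quantity $\le \const\cdot C_1/C \le \eps/3$ for $C \ge C(\eps, C_1, d)$, closing the bootstrap. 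For (**), the sum $\sum x_k$ is geometric in $k$ with ratio $2^{1/d}$ and (since $\log v_k$ decreases with $k$) dominated by its last term, $\sum x_k \le \const/(v_n^{1/d}\log^{d-1}v_n)$. Minimality of $n$ gives $\lambda_n \ge \lambda_{n-1}/\sqrt 2 \ge \const\cdot\eps\sqrt{S(v_n)}/\log^{d-1}v_n$, so $r_n \le \const\cdot\eps^{-1}v_n^{1/(2d)}\log^{d-1}v_n$, and combined with (*) this yields $r_0\sum x_k \le \const/(\eps v_n^{1/(2d)})$. A stopping-time analysis (using $\lambda_n \approx \lambda_0/2^{n/2}$ and \eqref{4.3a}) shows $v_n^{1/(2d)} \ge \const\cdot\eps C\log v_0$, so $r_0\sum x_k \le \const/(\eps^2 C\log C) \le \eps/3$ for $C$ large.

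The main obstacle is the precise bookkeeping of the log factors. The $\log^{-d}\vol B$ weight in hypothesis \eqref{4.3a} and the $\log^{-(d-1)}v_n$ weight in the conclusion \eqref{4.5} are calibrated exactly so that, across the $n \sim \log v_0$ iteration steps, both $\prod p_k$ and $r_0\sum x_k$ remain controlled uniformly in $\vol B$ once $C = C(\eps, W, C_1, d)$ is chosen large enough; a circular dependence between (*) and the bound $\lambda_k \le (1+\eps)\lambda_0/2^{k/2}$ is avoided by the bootstrap.
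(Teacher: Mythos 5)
Your proposal is correct in substance and uses the same engine as the paper: iterate Lemma~\ref{2.3}(a) along the halving chain with the hypothesis saturated (your recursion for $\la_{k+1}$ is exactly the paper's choice of $\la_0,\dots,\la_n$), telescope, and reduce \eqref{4.4} to the two conditions $\prod_k p_k=2^{n/2}|\mu|/|\la|\le1+\eps$ and $r_0\sum_k x_k\le\eps$, which are precisely the paper's \eqref{4.14} and \eqref{4.9}. The genuine difference is how $n$ is chosen. The paper fixes $n$ by the explicit formula \eqref{4.10} in terms of $v=\vol B$ and $|\la|$; this decouples all the estimates (Lemmas \ref{4.11}, \ref{4.13}, \ref{4.15} give the bounds on $n$ and $2^{-n}v$ directly, with no self-referential step), but then \eqref{4.5} is not automatic and must be proved by contradiction (Lemma \ref{4.16}). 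You instead take $n$ to be the first-passage time of $\la_k$ below the threshold in \eqref{4.5}, so \eqref{4.5} holds by fiat, and you recover the paper's quantitative facts (the analogue of Lemma \ref{4.13}, the lower bound $v_n^{1/(2d)}\gtrsim\eps C\log v_0$ playing the role of Lemma \ref{4.15}, and hence the width condition via Corollary \ref{3.2}) from the minimality of $n$ together with the bootstrap bound $\la_k\le(1+\eps)\la_0 2^{-k/2}$. Your key estimates check out (I verified the stopping-time bound $v_n^{1/(2d)}\ge\const\,\eps C\log v_0$ and the resulting $r_0\sum x_k\lesssim\eps^{-2}(C\log C)^{-1}$), but note that the bootstrap must be run as a single simultaneous induction: the bound $\sum_{k<n}\log^{d-1}v_k\lesssim\log^d v_0$, the applicability of Lemma~\ref{2.3} (width $\ge C_1$), and the validity of $-\log(1-t)\le 2t$ all require $v_k$ to stay large before stopping, and that largeness is itself derived from the bootstrap; so the width maintenance is not merely ``$C$ large relative to $W$'' as a standalone remark but part of the induction hypothesis. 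This is routine bookkeeping (which you flag), not a gap in the idea; in exchange, the paper's explicit \eqref{4.10} avoids any circularity at the price of the extra contradiction argument for \eqref{4.5}.
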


We start proving Prop.~\ref{4.1}.

\[
\includegraphics[scale=1]{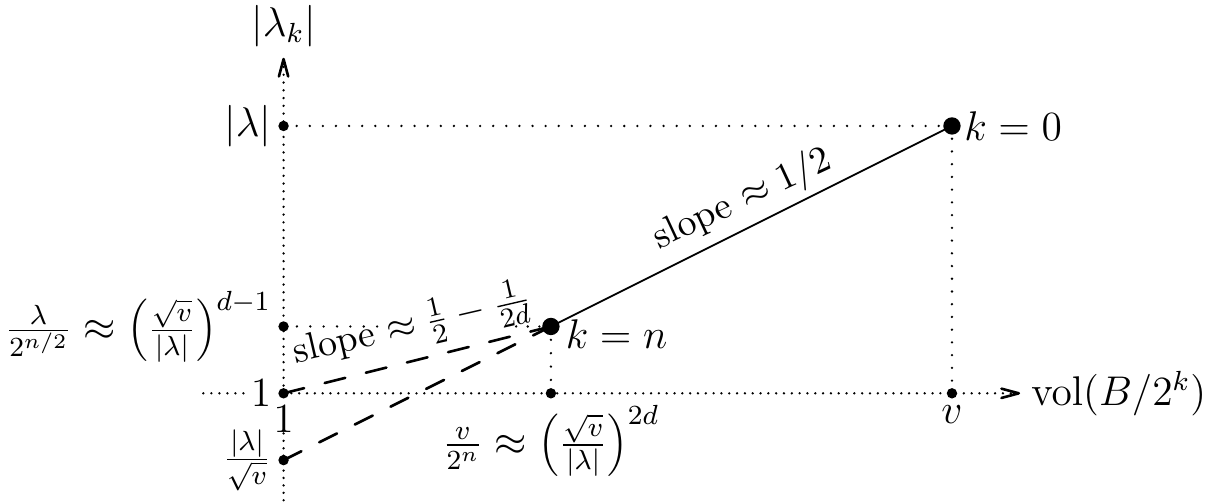}
\]
\begin{center}A hint to the proof (log--log plot).\end{center}

We'll find $C_2$ and $C_3$ such that \eqref{4.4}, \eqref{4.5} follow
from
\begin{gather}
\vol B \ge C_2 \, , \quad \width B \ge W \, , \quad
 \text{and} \tag{\ref*{4.2a}$\,'$} \label{4.2} \\
0 < C_3 |\la| \le \sqrt{\vol B} \log^{-d} \vol B \, ;
 \tag{\ref*{4.3a}$\,'$} \label{4.3}
\end{gather}
then we just take $ C = \max(C_2,C_3) $.

We'll prove that the claim ``for all $B,\la$ satisfying \eqref{4.2},
\eqref{4.3} there exist $n,\mu$ satisfying \eqref{4.4}, \eqref{4.5}''
holds \emph{ultimately} in the following sense: it admits a sufficient
condition of the form\footnote{%
 This is the meaning of the word ``ultimately'' throughout this
 section (and only in this section).}
\[
C_3 \ge C_{3,0}(\eps,W,d,C_1) \, , \quad
C_2 \ge C_{2,0}(C_3,\eps,W,d,C_1) \, .
\]

We assume that
\begin{equation}\label{4.6}
|\la| > \eps \sqrt{S(\vol B)} \log^{-(d-1)} \vol B \, ,
\end{equation}
since otherwise $n=0$, $\mu=\la$ do the job.
WLOG, $ \eps C_1 \le 1 $.

Given $n$ such that $ \width(B/2^{n-1}) \ge C_1 $ and numbers $ \la_0,
\dots, \la_n $, all positive or all negative, such that
\begin{equation}\label{4.7}
\frac{\sqrt2}{|\la_k|} - \frac1{|\la_{k+1}|} \ge y_k \quad \text{for all
} k=0,\dots,n-1 \, ,
\end{equation}
where
\[
y_k = \frac{C_1}{\sqrt{ 2^{-(k+1)} v }} \log^{d-1} S(2^{-k} v)
\]
and $ v $ denotes $ \vol B $, Lemma \ref{2.3}(a) gives
\begin{equation}\label{4.8}
\frac1{ |\la_k| \sqrt{ 2^{-k} v } } f_{B/2^k} (\la_k) \le \frac1{
|\la_{k+1}| \sqrt{ 2^{-(k+1)} v }} f_{B/2^{k+1}} (\la_{k+1}) + x_k \, ,
\end{equation}
where
\[
x_k = \frac1{ R(2^{-k} v) } \log^{-(d-1)} S(2^{-k} v) \, .
\]
In order to obtain \eqref{4.4}, \eqref{4.5} we need $n$ such that
$ \width(B/2^n) \ge W, $ and $ \la_0,
\dots, \la_n $ (as above) such that $ \la_0 = \la $ and, taking $ \mu
= \la_n $, we get \eqref{4.5}, $ 2^{n/2} |\mu| \le (1+\eps) |\la| $,
and
\begin{equation}\label{4.9}
x_0 + \dots + x_{n-1} \le \eps \frac{ |\la| }{ \sqrt v } \, ;
\end{equation}
then \eqref{4.8} gives $ \frac{ f_B(\la) }{ |\la| \sqrt v } \le \frac{
f_{B/2^n}(\mu) }{ |\mu| \sqrt{2^{-n}v} } + \eps \frac{ |\la| }{ \sqrt
v } $, which implies the second inequality of \eqref{4.4}:
\[
\frac1{\la^2} f_B(\la) \le \frac{ \sqrt v }{ |\la| } \Big( \frac{
f_{B/2^n}(\mu) }{ |\mu| \sqrt{2^{-n}v} } + \eps \frac{ |\la| }{
\sqrt v } \Big) = \frac{ 2^{n/2} |\mu| }{ |\la| } \cdot \frac{
f_{B/2^n}(\mu) }{ \mu^2 } + \eps \, .
\]

We note that $ \frac{\sqrt v}{ C_3 |\la| } \ge \log^d v $
by \eqref{4.3} and $ v \ge C_2 $ by \eqref{4.2}, whence
\begin{equation}\label{***}
\frac{\sqrt v}{ C_3 |\la| } \to \infty \quad \text{ultimately}
\end{equation}
in the following sense: for every $ M \in \R $ the inequality
$ \frac{\sqrt v}{ C_3 |\la| } \ge M $ ultimately holds. This means
existence of functions $ C_{3,1} $ and $ C_{2,1} $ such that
$ \frac{\sqrt v}{ C_3 |\la| } \ge M $ whenever $ C_3 \ge C_{3,1}
(M,\eps,W,d,C_1) $ and $ C_2 \ge C_{2,1} (M,C_3,\eps,W,d,C_1)
$. Moreover, the number $M$ may be replaced with an arbitrary function
of $ \eps,W,d,C_1 $; indeed, $ \frac{\sqrt v}{ C_3 |\la| } \ge
M(\eps,W,d,C_1) $ whenever $ C_3 \ge C_{3,1}
(M(\eps,W,d,C_1),\eps,W,d,C_1) $ and $ C_2 \ge C_{2,1}
(M(\eps,W,d,C_1),C_3,\eps,W,d,C_1) $. For example, ultimately $
C_3 \to \infty $, $ \frac{C_2}{C_3} \to \infty $, but $C_1$ and $W$ do
not tend to infinity.

We take integer $n$ such that
\begin{equation}\label{4.10}
2^{n-1} < (2d)^{2d(d-1)} (C_3 |\la|)^{2d} v^{-(d-1)}
\log^{2d(d-1)} \frac{\sqrt v}{C_3 |\la|} \le 2^n \, .
\end{equation}

\begin{lemma}\label{4.11}
$ n \ge 1 $ ultimately.
\end{lemma}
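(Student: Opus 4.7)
The plan is to show that the middle expression in \eqref{4.10}, which I will denote $\Phi(\la,v)$ with $v = \vol B$, exceeds $1$ ultimately; then $n \ge 1$ follows immediately from the defining inequality $\Phi \le 2^n$.

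First I would dispose of the $d=1$ case: here $\Phi = (C_3|\la|)^2$, and \eqref{4.6} simply reads $|\la| > \eps$ (since $S(v) = 1$ and $\log^0 v = 1$), so $\Phi > (C_3\eps)^2 > 1$ whenever $C_3 > 1/\eps$.

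For $d \ge 2$ the analysis is more delicate, and I would introduce the rescaled parameter $t = C_3|\la|/\sqrt v$, rewriting $\Phi = (2d)^{2d(d-1)}\, v\, t^{2d}\log^{2d(d-1)}(1/t)$. By \eqref{4.3}, $t \le \log^{-d}v$, and by \eqref{4.6}, $t > t_\star := C_3\eps\, v^{-1/(2d)} \log^{-(d-1)} v$; both endpoints tend to $0$ ultimately. A short derivative computation (substitute $t = e^{-u}$) shows that $g(t) := t^{2d}\log^{2d(d-1)}(1/t)$ is strictly increasing on $(0, e^{-(d-1)})$, an interval which contains the entire admissible $t$-range ultimately. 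Hence $\Phi(\cdot, v)$ attains its minimum over admissible $|\la|$ at $|\la| = |\la|_\star := \eps\, v^{(d-1)/(2d)} \log^{-(d-1)} v$, where the powers of $v$ and of $\log v$ cancel and leave
\[
\Phi(|\la|_\star, v) = (2d)^{2d(d-1)} (C_3\eps)^{2d} \left( \frac{1}{2d} + \frac{(d-1)\log\log v - \log(C_3\eps)}{\log v} \right)^{2d(d-1)}.
\]
The bracket tends to $1/(2d)$ as $v \to \infty$, so $\Phi(|\la|_\star, v) \to (C_3\eps)^{2d}$. Taking $C_{3,0} \ge 2/\eps$ makes this limit at least $2^{2d} \ge 4$, and then choosing $C_{2,0}$ large enough (in terms of $C_3$, $\eps$, $d$) to bring the bracket sufficiently close to $1/(2d)$ delivers $\Phi > 1$.

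The main obstacle is the monotonicity step. A naive approach, plugging the lower bound \eqref{4.6} into the $(C_3|\la|)^{2d}$ factor and handling $\log^{2d(d-1)}(\sqrt v/(C_3|\la|))$ separately, collapses at the upper end $|\la| \sim \sqrt v/(C_3\log^d v)$ of the admissible range: there the log factor is only $(d\log\log v)^{2d(d-1)}$, producing a ratio $(\log\log v/\log v)^{2d(d-1)} \to 0$ that destroys the bound. Exploiting the monotonicity of $g$ to restrict to the favorable endpoint $|\la|_\star$ is what makes the argument go through.
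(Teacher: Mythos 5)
Your argument is correct, but it reaches the conclusion by a different route than the paper. You work directly with the middle term of \eqref{4.10}, writing it as $(2d)^{2d(d-1)}\,v\,g(t)$ with $t=C_3|\la|/\sqrt v$ and $g(t)=t^{2d}\log^{2d(d-1)}(1/t)$, observe that $g$ is increasing on $(0,e^{-(d-1)})$ --- an interval that ultimately contains the whole admissible range of $t$, since \eqref{4.3} gives $t\le\log^{-d}v$ --- and then evaluate at the boundary value $t_\star$ coming from \eqref{4.6}, where the powers of $v$ and of $\log v$ cancel and leave roughly $(C_3\eps)^{2d}$; with $\eps C_3\ge 2$ ultimately, and $C_2$ large in terms of $C_3,\eps,d$ (which the ``ultimately'' format explicitly permits), this exceeds $1$ and forces $n\ge1$. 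The paper instead argues by contradiction: assuming the middle term is $\le1$, i.e.\ $(2d)^{d-1}\frac{C_3|\la|}{\sqrt{S(v)}}\log^{d-1}\frac{\sqrt v}{C_3|\la|}\le1$, it combines this with \eqref{4.6} and $\eps C_3\ge1$ to get $\log^{d-1}v>(2d)^{d-1}\log^{d-1}\frac{\sqrt v}{C_3|\la|}$, hence $C_3|\la|>\sqrt{S(v)}$, and then the assumed inequality collapses to $(2d)^{d-1}\log^{d-1}\frac{\sqrt v}{C_3|\la|}\le1$, contradicting \eqref{***}; the $d=1$ case is the same observation $C_3|\la|>1$ that you make. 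What your route buys is a more quantitative statement: it shows that $2^n$ is ultimately of order $(C_3\eps)^{2d}$, not merely $>1$, at the price of a small monotonicity lemma and slightly heavier bookkeeping; the paper's chain of inequalities needs no calculus and stays entirely within the quantities already in play (\eqref{4.6}, \eqref{4.3}, \eqref{***}). Two harmless points to tidy up: the admissible set of $|\la|$ is open at its lower end, so the minimum at $|\la|_\star$ is really an infimum --- but since $g$ is strictly increasing you get the strict inequality $\Phi(t)>\Phi(t_\star)$, which is all you need; and one should note (as you implicitly do) that $t_\star$ itself lies in the monotone interval, which is automatic because $t_\star<t\le\log^{-d}v<e^{-(d-1)}$ once $C_2$ is ultimately large.
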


\begin{sloppypar}
\begin{proof}
Assume the contrary: $ (2d)^{2d(d-1)} (C_3 |\la|)^{2d}
v^{-(d-1)} \log^{2d(d-1)} \frac{\sqrt v}{C_3|\la|} \le 1 $, that is,
\begin{equation}\label{*3.10}
(2d)^{d-1} \frac{ C_3 |\la| }{ \sqrt{S(v)} } \log^{d-1} \frac{\sqrt
v}{C_3 |\la|} \le 1 \, .
\end{equation}
Ultimately, first, $ |\la|>\eps $ due to \eqref{4.6}, second, $ \eps
C_3 > 1 $, and therefore, third, $ C_3 |\la| > 1 $, which
contradicts \eqref{*3.10} for $d=1$. Now consider $ d \ge 2
$. Using \eqref{4.6} and \eqref{*3.10},
\begin{gather*}
\eps \frac{ \sqrt{S(v)} }{ \log^{d-1} v } < |\la| \le \frac1{C_3 (2d)^{d-1}}
 \sqrt{S(v)} \log^{-(d-1)} \frac{\sqrt v}{C_3 |\la|} \, ; \\
\log^{d-1} v > \eps C_3 (2d)^{d-1} \log^{d-1} \frac{\sqrt v}{C_3 |\la|} =
\eps C_3 \Big( 2d \log \frac{\sqrt v}{C_3 |\la|} \Big)^{d-1} \, ;
\end{gather*}
ultimately $ \eps C_3 \ge 1 $, and we get
$ \log^{d-1} v > (2d)^{d-1} \log^{d-1} \frac{\sqrt v}{C_3|\la|} $;
$ v > \frac{v^d}{(C_3|\la|)^{2d}} $;
$ 1 > \frac{v^{d-1}}{(C_3|\la|)^{2d}}
= \( \frac{\sqrt{S(v)}}{C_3|\la|} \)^{2d} $.
Now \eqref{*3.10} gives $ (2d)^{d-1} \log^{d-1}
\frac{\sqrt v}{C_3 |\la|} \le 1 $, which ultimately
contradicts \eqref{***}.
\end{proof}
\end{sloppypar}

Below, ``$y\le\const(\al,\be)x$'' means that $ y \le \const \cdot x $
for some constant dependent on $\al,\be$ only.

\begin{lemma}\label{4.13}
$\displaystyle  n \le \frac{\eps}{1+\eps} \cdot \frac{\sqrt v}{C_1
|\la|} \log^{-(d-1)} v $ ultimately.
\end{lemma}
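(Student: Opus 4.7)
The plan is to extract from the defining inequality \eqref{4.10} an upper bound of the form $n \le \log_2 v + O(\log\log v)$, and then to compare this with the target, which by \eqref{4.3} is at least a constant multiple of $\log v$ --- a constant that grows with $C_3$ and so can be made as large as we please.

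First I would take logarithms in the upper half of \eqref{4.10}. The algebraic factor $(C_3|\la|)^{2d} v^{-(d-1)}$ is controlled via \eqref{4.3}, from which $C_3|\la|\le\sqrt v\,\log^{-d} v$ gives $(C_3|\la|)^{2d} v^{-(d-1)}\le v\,\log^{-2d^2} v$. For the logarithmic factor $\log^{2d(d-1)}\frac{\sqrt v}{C_3|\la|}$, present only when $d\ge 2$, I would use the lower bound $|\la|>\eps\sqrt{S(v)}\log^{-(d-1)} v$ coming from the standing assumption \eqref{4.6}; this translates into $\frac{\sqrt v}{C_3|\la|}\le\frac1{\eps C_3}v^{1/(2d)}\log^{d-1} v$ and hence $\log\frac{\sqrt v}{C_3|\la|}\le\bigl(\tfrac1{2d}+o(1)\bigr)\log v$ ultimately. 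Raising to the $2d(d-1)$-th power contributes a factor $(2d)^{-2d(d-1)}\log^{2d(d-1)} v\cdot(1+o(1))$, which is designed to cancel the combinatorial prefactor $(2d)^{2d(d-1)}$ in \eqref{4.10}. Combining the pieces I would get $2^n\le 2v\log^{-2d} v\cdot(1+o(1))$ ultimately, and therefore $n\le\log_2 v - 2d\log_2\log v + O(1)$.

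For the target side, the elementary consequence $\sqrt v/|\la|\ge C_3\log^d v$ of \eqref{4.3} gives $\frac1{C_1}\frac{\sqrt v}{|\la|}\log^{-(d-1)} v\ge\frac{C_3}{C_1}\log v$, so the right-hand side of the claimed bound is at least $\frac{\eps C_3}{(1+\eps)C_1}\log v$. Choosing the threshold $C_{3,0}(\eps,W,d,C_1)$ large enough that $\frac{\eps C_3\ln 2}{(1+\eps)C_1}>1$ forces this to exceed $\log_2 v$ by a positive linear-in-$\log v$ margin, which absorbs the $-2d\log_2\log v+O(1)$ slack in the bound on $n$ once $v$ (i.e.\ $C_2$) is in turn taken large via \eqref{***}.

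The main delicate step is the cancellation of the $(2d)^{2d(d-1)}$ prefactor by a matching bound on $\log^{2d(d-1)}\frac{\sqrt v}{C_3|\la|}$: a coarser estimate that simply replaced $\log\frac{\sqrt v}{C_3|\la|}$ by $\log v$ would inflate $2^n$ by exactly $(2d)^{2d(d-1)}$ and spoil the linear comparison between the two sides. Everything else is standard bookkeeping --- logarithm of \eqref{4.10} against the easy lower bound on $\sqrt v/(C_1|\la|)$, with both tiers of the ``ultimately'' quantifier handled in the natural order $C_3$ then $C_2$.
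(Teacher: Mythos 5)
Your argument is correct, and its skeleton is the same as the paper's: use the left-hand inequality of \eqref{4.10} to bound $n$ by (a multiple of) $\log v$, observe via \eqref{4.3} that the target right-hand side is at least $\frac{\eps C_3}{(1+\eps)C_1}\log v$, and close the gap by taking $C_3$ large (then $C_2$, i.e.\ $v$, large to absorb lower-order terms) --- exactly the two-tier use of ``ultimately''. Where you differ is in the intermediate estimate: you work out the sharp bound $2^n\le 2(1+o(1))\,v\log^{-2d}v$ by playing \eqref{4.6} against the prefactor $(2d)^{2d(d-1)}$, whereas the paper is deliberately coarse, noting only that $\frac{\eps}{|\la|}\le\const(d)$ by \eqref{4.6}, hence $\frac{\sqrt v}{C_3|\la|}\le\const(d)\sqrt v$ and $2^n\le\const(d)\,v^2$, i.e.\ $n\le\const(d)\log v$ with a constant depending on $d$ only. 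Your ``delicate step'' is carried out correctly, but your claim that it is necessary --- that a coarser estimate replacing $\log\frac{\sqrt v}{C_3|\la|}$ by $\log v$ would ``inflate $2^n$ by $(2d)^{2d(d-1)}$ and spoil the linear comparison'' --- is mistaken: a constant multiplicative factor on $2^n$ only shifts $n$ by an additive constant (and even a factor $\log^{2d(d-1)}v$ only costs $O(\log\log v)$), which is harmlessly absorbed since $C_3$ is allowed to depend on $d,\eps,C_1$ and the margin $\frac{\eps C_3}{(1+\eps)C_1}\log v$ grows linearly in $\log v$ with an arbitrarily large coefficient. So the refinement buys nothing here; the paper's proof is the coarse version of yours, ending with the condition $\eps C_3\ge\const(d)(1+\eps)C_1$ in place of your $\frac{\eps C_3\ln 2}{(1+\eps)C_1}>1$.
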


\begin{proof}
First we prove that $ n \le \const(d) \log v $.
  
We have $ \frac{\eps}{|\la|} \le \const(d) $, since by \eqref{4.6}, $
\frac{\eps}{|\la|} < \frac{ \log^{d-1} v }{ \sqrt{S(v)} } $, the latter
being bounded in $ v \in (1,\infty) $.

Ultimately $ \eps C_3 \ge 1 $, thus
$ \frac1{C_3 |\la|} = \frac1{\eps C_3} \cdot  \frac{\eps}{|\la|} \le
\frac{\eps}{|\la|} \le \const(d) $,
whence, using \eqref{***} and \eqref{4.3},
$ 1 \le \frac{\sqrt v}{C_3 |\la|} \le \const(d) \sqrt v $.
Using this and \eqref{4.10} (the left-hand inequality) we get
\begin{multline*}
2^n < 2 (2d)^{2d(d-1)} (C_3 |\la|)^{2d} \frac1{v^{d-1}} \log^{2d(d-1)}
 \frac{\sqrt v}{C_3 |\la|} \le \\
\le \const(d) (\sqrt v)^{2d} \frac1{v^{d-1}}
 \log^{2d(d-1)} ( \const(d) \sqrt v ) \le \\
\le \const(d) v \Big( \const(d) +
 \frac12 \log v \Big)^{2d(d-1)} \le \const(d) v^2 \, ,
\end{multline*}
whence $ n \le \const() \log \( \const(d) v^2 \) \le \const(d) \log
v $.

Second, using $ n \le \const(d) \log v $ and then \eqref{4.3} we have
\begin{multline*}
n \frac{1+\eps}{\eps} \cdot \frac{C_1 |\la|}{\sqrt v} \log^{d-1} v \le
 \frac{1+\eps}{\eps} \cdot \frac{C_1 |\la|}{\sqrt v} \const(d) \log^d v
= \\
\frac{1+\eps}{\eps C_3} \cdot \frac{C_1}{\sqrt v} \const(d) C_3
|\la| \log^d v \le
\frac{1+\eps}{\eps C_3} \cdot \frac{C_1}{\sqrt v} \const(d) \sqrt v =
\frac{1+\eps}{\eps C_3} \cdot C_1 \const(d) \, ;
\end{multline*}
and $ \eps C_3 \ge \const(d) (1+\eps) C_1 $ ultimately.
\end{proof}

Having $ n \log^{d-1} v \le \frac{\eps}{1+\eps} \cdot \frac{\sqrt
v}{C_1 |\la|} $ (ensured by Lemma \ref{4.13}) we define $
\la_0,\dots,\la_n $ (all positive or all negative) by $ \la_0 = \la $
and
\[
\frac{\sqrt2}{|\la_k|} - \frac1{|\la_{k+1}|} = y_k \quad \text{for all
} k=0,\dots,n-1 \, .
\]
That is,
\[
\frac1{ 2^{k/2} |\la_k| } = \frac1{|\la|} - \frac{C_1}{\sqrt v}
\sum_{i=0}^{k-1} \log^{d-1} S(2^{-i}v) \, ;
\]
the right-hand side is positive, since
\[
\sum_{i=0}^{k-1} \log^{d-1} S(2^{-i}v) \le k \log^{d-1} S(v) \le n
\log^{d-1} v \le \frac{\eps}{1+\eps} \cdot \frac{\sqrt v}{C_1
|\la|} \, ;
\]
moreover, for $ k = n $ we get an inequality required
before \eqref{4.9}:
\begin{equation}\label{4.14}
2^{n/2} |\mu| \le (1+\eps) |\la| \, .
\end{equation}

\begin{lemma}\label{4.15}
$\displaystyle 2^{-n} v \to \infty $ ultimately.
\end{lemma}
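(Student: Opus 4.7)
The plan is to use the upper bound on $2^n$ given by the left-hand inequality of \eqref{4.10}, then extract from it a power of the ratio $\sqrt v/(C_3|\la|)$, and finally invoke \eqref{***} to conclude.

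From the left-hand inequality of \eqref{4.10},
\[
2^n < 2 (2d)^{2d(d-1)} (C_3 |\la|)^{2d} v^{-(d-1)} \log^{2d(d-1)} \frac{\sqrt v}{C_3 |\la|} \, .
\]
Therefore
\[
\frac{v}{2^n} > \frac{v^d}{2 (2d)^{2d(d-1)} (C_3|\la|)^{2d} \log^{2d(d-1)} \frac{\sqrt v}{C_3|\la|}} = \frac{1}{2 (2d)^{2d(d-1)}} \cdot \frac{T^{2d}}{\log^{2d(d-1)} T} \, ,
\]
where I set $T = \sqrt v/(C_3|\la|)$ and used $v^d/(C_3|\la|)^{2d} = T^{2d}$.

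Now \eqref{***} states that $T \to \infty$ ultimately (for arbitrary prescribed target, in the sense explained there). Since any power $T^{2d}$ dominates any power of $\log T$ as $T \to \infty$ (trivially when $d=1$, since then the log factor is absent by the convention $x^0=1$), we have $T^{2d}/\log^{2d(d-1)} T \to \infty$ as $T \to \infty$. Combining with the previous display, $v/2^n \to \infty$ ultimately.

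The only step requiring care is the appeal to \eqref{***} with a prescribed target: one invokes it with $M$ chosen so large (as a function of $\eps,W,d,C_1$) that the resulting lower bound on $T^{2d}/\log^{2d(d-1)} T$ exceeds whatever threshold is needed downstream; the machinery after \eqref{***} explicitly allows $M$ to be any function of $\eps,W,d,C_1$. There is no real obstacle beyond this bookkeeping.
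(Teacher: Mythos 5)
Your proof is correct and is essentially the paper's own argument: both invert the left-hand inequality of \eqref{4.10} to get $2^{-n}v \ge \const(d)\,\bigl(T\log^{-(d-1)}T\bigr)^{2d}$ with $T=\sqrt v/(C_3|\la|)$, and then conclude via \eqref{***} (with the remark allowing $M$ to be a function of $\eps,W,d,C_1$) together with the fact that $x\log^{-(d-1)}x\to\infty$. No substantive difference.
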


\begin{proof}
By \eqref{4.10} (the left-hand inequality),
\[
\!\!\!\! 2^{-n} v > \frac{ v \cdot v^{d-1} }{ 2(2d)^{2d(d-1)}
(C_3 |\la|)^{2d} } \log^{-2d(d-1)} \frac{\sqrt v}{C_3 |\la|} =
\const(d) \Big( \frac{\sqrt v}{C_3
|\la|} \log^{-(d-1)} \frac{\sqrt v}{C_3 |\la|} \Big)^{2d} ,
\]
which ultimately tends to infinity due to \eqref{***}, since $
x \log^{-(d-1)} x \to \infty $ as $ x \to +\infty $.
\end{proof}

Now we are in position to ensure that $ \width(B/2^{n-1}) \ge C_1
$ (as required before \eqref{4.7}) and $ \width(B/2^n) \ge W
$ (as required in \eqref{4.4}). The former follows from the latter. By
Corollary \ref{3.2} it is sufficient to have $ \width B \ge W $ (which
is given) and $ W^d 2^{n-1} \le 2^{-d} v $, that is, $ 2^{-n} v \ge
2^{d-1} W^d $ (which ultimately holds by Lemma \ref{4.15}, recall the
remark after \eqref{***}).

Here is the proof of \eqref{4.5}.

\begin{lemma}\label{4.16}
$\displaystyle |\mu| \le \eps \sqrt{S(2^{-n}v)} \log^{-(d-1)}
(2^{-n}v) $ ultimately.
\end{lemma}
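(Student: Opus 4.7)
The plan is to combine \eqref{4.14} with the lower bound on $2^n$ furnished by \eqref{4.10}. Since $\sqrt{S(w)}=w^{(d-1)/(2d)}$ and $\tfrac12-\tfrac{d-1}{2d}=\tfrac1{2d}$, the desired inequality $|\mu|\le\eps\sqrt{S(2^{-n}v)}\log^{-(d-1)}(2^{-n}v)$ will, in view of \eqref{4.14}, follow from the single scalar inequality
\[
(1+\eps)|\la|\,\log^{d-1}(2^{-n}v)\;\le\;\eps\cdot 2^{n/(2d)}\,v^{(d-1)/(2d)}.
\]
Thus the task reduces to bounding $2^{n/(2d)}v^{(d-1)/(2d)}$ from below and $\log^{d-1}(2^{-n}v)$ from above, in compatible ways.

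For the lower bound I shall take the $(2d)$-th root of the right-hand inequality in \eqref{4.10}, obtaining
\[
2^{n/(2d)}\,v^{(d-1)/(2d)}\;\ge\;(2d)^{d-1}\,C_3|\la|\,\log^{d-1}\!\tfrac{\sqrt v}{C_3|\la|}.
\]
For the upper bound on $\log(2^{-n}v)$, the same inequality of \eqref{4.10} rearranges to $2^{-n}v\le\bigl(\tfrac{\sqrt v}{C_3|\la|}\bigr)^{2d}\big/\bigl((2d)^{2d(d-1)}\log^{2d(d-1)}\!\tfrac{\sqrt v}{C_3|\la|}\bigr)$, and taking logarithms yields $\log(2^{-n}v)\le 2d\,\log\!\tfrac{\sqrt v}{C_3|\la|}$ ultimately, since both subtracted correction terms $2d(d-1)\log(2d)$ and $2d(d-1)\log\log\!\tfrac{\sqrt v}{C_3|\la|}$ are non-negative (the latter once $\tfrac{\sqrt v}{C_3|\la|}\ge\E$, which holds ultimately by \eqref{***}). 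Raising to the $(d-1)$-th power then gives $\log^{d-1}(2^{-n}v)\le(2d)^{d-1}\log^{d-1}\!\tfrac{\sqrt v}{C_3|\la|}$.

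Substituting the two bounds into the reduced scalar inequality, the factors $(2d)^{d-1}$ and $\log^{d-1}\!\tfrac{\sqrt v}{C_3|\la|}$ cancel and it remains only to verify $\eps C_3\ge 1+\eps$, which holds ultimately since $C_3$ may be chosen arbitrarily large. The main obstacle is the logarithmic comparison in the previous paragraph---correctly signing the constant and $\log\log$ corrections coming from \eqref{4.10} and then invoking \eqref{***} to absorb them; after that the rest of the argument is purely algebraic.
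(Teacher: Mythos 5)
Your proof is correct, and it reorganizes the argument in a genuinely different way from the paper. The paper proves Lemma \ref{4.16} by contradiction: assuming the inequality fails, it combines \eqref{4.14} with the right-hand inequality of \eqref{4.10} to get $(2d)^{d-1}\log^{d-1}\frac{\sqrt v}{C_3|\la|} < \log^{d-1}(2^{-n}v)$, and then (for $d\ge2$) unfolds this to $2^n < (C_3|\la|)^{2d}/v^{d-1}$ and invokes \eqref{4.10} a second time to reach a contradiction with \eqref{***}. You instead argue directly, observing that the same right-hand inequality of \eqref{4.10} already yields the favorable comparison $\log(2^{-n}v)\le 2d\log\frac{\sqrt v}{C_3|\la|}$, because the correction terms $2d(d-1)\log(2d)$ and $2d(d-1)\log\log\frac{\sqrt v}{C_3|\la|}$ enter with the right sign (and vanish for $d=1$); combined with the $(2d)$-th root of the same inequality and the reduction via \eqref{4.14}, everything cancels down to $\eps C_3\ge 1+\eps$, which holds ultimately exactly as in the paper's step $\frac{1+\eps}{\eps C_3}\le1$. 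Your route is slightly shorter, avoids the second application of \eqref{4.10} and the separate treatment of $d=1$ versus $d\ge2$, at the cost of having to track the signs of the logarithmic corrections carefully, which you do. One small point to make explicit: your reduction step (multiplying the target inequality by $\log^{d-1}(2^{-n}v)$) and the raising of $\log(2^{-n}v)\le 2d\log\frac{\sqrt v}{C_3|\la|}$ to the $(d-1)$-th power require $\log(2^{-n}v)>0$; this is not automatic from your upper bound on $2^{-n}v$, but it holds ultimately by Lemma \ref{4.15}, which is established just before and should be cited.
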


\begin{sloppypar}
\begin{proof}
Assume the contrary. Using \eqref{4.14}, $ (1+\eps) 2^{-n/2} |\la| \ge
|\mu| > \eps \sqrt{S(2^{-n}v)} \log^{-(d-1)} (2^{-n}v) = \eps \cdot
2^{-n/2} \cdot 2^{\frac{n}{2d}} v^{\frac{d-1}{2d}} \log^{-(d-1)}
(2^{-n}v) $, whence $ 2^{\frac{n}{2d}} < \frac{1+\eps}{\eps} |\la|
v^{-\frac{d-1}{2d}} \log^{d-1} (2^{-n}v) $; using \eqref{4.10} (the
right-hand inequality),
\begin{multline*}
(2d)^{2d(d-1)} (C_3 |\la|)^{2d} v^{-(d-1)} \log^{2d(d-1)} \frac{\sqrt
 v}{C_3 |\la|} \le 2^n < \\
< \Big(\frac{1+\eps}{\eps}|\la|\Big)^{2d} v^{-(d-1)} \log^{2d(d-1)}
(2^{-n}v) \, ,
\end{multline*}
 \[
 (2d)^{d-1} \log^{d-1} \frac{\sqrt v}{C_3 |\la|} < \frac{1+\eps}{\eps
 C_3} \log^{d-1} (2^{-n}v) \, ;
\]
ultimately $ \frac{1+\eps}{\eps C_3} \le 1 $, and we get
\[
(2d)^{d-1} \log^{d-1} \frac{\sqrt v}{C_3 |\la|} < \log^{d-1} (2^{-n}v)
\, .
\]
For $d=1$ it means $1<1$. Now consider $ d \ge 2 $. We have $ 2d \log
\frac{\sqrt v}{C_3 |\la|} < \log (2^{-n}v) $; $ \( \frac{\sqrt v}{C_3
|\la|} \)^{2d} < 2^{-n} v $; $ 2^n < v \( \frac{C_3 |\la|}{\sqrt v}
\)^{2d} = \frac{ (C_3 |\la|)^{2d} }{ v^{d-1} } $. Using again
\eqref{4.10} (the right-hand inequality), $ (2d)^{2d(d-1)} \log^{2d(d-1)}
\frac{\sqrt v}{C_3 |\la|} < 1 $, which ultimately
contradicts \eqref{***}.
\end{proof}
\end{sloppypar}

We strengthen \eqref{***}:
\begin{equation}\label{***a}
\frac{\sqrt v}{ \phi(C_3,\eps,W,d,C_1) |\la| } \to \infty
\quad \text{ultimately}
\end{equation}
for every positive-valued function $\phi$. The proof is based again on
the inequality $ \frac{\sqrt v}{ C_3 |\la| } \ge \log^d C_2 $. For
arbitrary $M$ we have
\begin{multline*}
\frac{\sqrt v}{ \phi(C_3,\eps,W,d,C_1) |\la| } \ge M \> \Longleftarrow \>
\frac{C_3}{\phi(C_3,\eps,W,d,C_1)} \log^d C_2 \ge M \Longleftarrow \> \\
C_2 \ge \exp \bigg( \Big( \frac{ M \phi(C_3,\eps,W,d,C_1) }{ C_3
 } \Big)^{1/d} \bigg) \, ;
\end{multline*}
the latter holds ultimately.

Here is the proof of \eqref{4.9}.

\begin{lemma}\label{4.17}
$\displaystyle x_0 + \dots + x_{n-1} \le \eps \frac{ |\la| }{ \sqrt v } $.
\end{lemma}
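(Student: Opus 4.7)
The plan is to treat the sum as essentially geometric with ratio $2^{1/d}$, so that it is controlled by its last term, which in turn is bounded via the defining inequality \eqref{4.10} for $n$ together with the lower bound on $2^{-n}v$ already extracted in the proof of Lemma~\ref{4.15}. Writing $x_k = \frac{2^{k/d}}{v^{1/d}} \cdot \log^{-(d-1)} S(2^{-k}v)$ and noting that $2^{-k}v \ge 2^{-n}v$ while $2^{-n}v \to \infty$ ultimately by Lemma~\ref{4.15}, I get $\log^{d-1} S(2^{-k}v) \ge \const(d) \log^{d-1}(2^{-n}v)$ uniformly in $k \le n-1$ (trivially for $d=1$, by convention). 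Summing the geometric series $\sum_{k=0}^{n-1} 2^{k/d} \le \const(d) \cdot 2^{n/d}$ then yields
\[
\sum_{k=0}^{n-1} x_k \le \const(d) \cdot \frac{2^{n/d}}{v^{1/d} \log^{d-1}(2^{-n}v)}.
\]

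Next, taking $d$-th roots in the right-hand inequality of \eqref{4.10} gives $2^{n/d} \le \const(d) (C_3|\la|)^2 v^{-(d-1)/d} \log^{2(d-1)} u$, where $u = \sqrt{v}/(C_3|\la|)$. The lower bound $2^{-n}v \ge \const(d) \bigl(u \log^{-(d-1)} u\bigr)^{2d}$ already established in the proof of Lemma~\ref{4.15}, combined with $u \to \infty$ ultimately by \eqref{***}, implies $\log(2^{-n}v) \ge \const(d) \log u$ ultimately (the $\log\log u$ correction is absorbed into, say, half of $\log u$). Substituting and using the identity $C_3|\la|/\sqrt{v} = 1/u$, I obtain
\[
\sum_{k=0}^{n-1} x_k \le \const(d) \cdot \frac{(C_3|\la|)^2}{v} \log^{d-1} u = \const(d) \cdot \frac{C_3 \log^{d-1} u}{u} \cdot \frac{|\la|}{\sqrt{v}}.
\]

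The desired conclusion $\sum_k x_k \le \eps |\la|/\sqrt{v}$ then reduces to $\const(d) \cdot C_3 \cdot u^{-1} \log^{d-1} u \le \eps$; since $x^{-1} \log^{d-1} x \to 0$ as $x \to +\infty$, this holds ultimately by the strengthened divergence \eqref{***a} (applied with a function $\phi$ depending on $C_3,\eps,d$ chosen to make $u$ exceed the threshold at which $C_3 u^{-1} \log^{d-1} u \le \eps/\const(d)$). The main obstacle is not conceptual but purely bookkeeping: one must verify that all the ``ultimately'' conditions invoked (\eqref{***}, \eqref{***a}, Lemma~\ref{4.15}, and the $\log$-versus-$\log\log$ comparison) can be met simultaneously by taking $C_2$ sufficiently large in terms of $C_3,\eps,W,d,C_1$, and no new ingredient is required beyond what already appears in Lemmas~\ref{4.11}, \ref{4.15}, and \ref{4.16}.
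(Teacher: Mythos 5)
Your proof is correct and follows essentially the same route as the paper: bound the sum as a geometric series dominated by its last term, control $2^{n/d}$ via \eqref{4.10}, and finish with \eqref{***a}; the only differences are bookkeeping (you retain the factor $\log^{-(d-1)}(2^{-n}v)$ and cancel half the powers of $\log\frac{\sqrt v}{C_3|\la|}$ using the quantitative bound inside the proof of Lemma \ref{4.15}, whereas the paper simply bounds that factor by $1$ and uses $\log^{2d-2}u/u\le u^{-1/2}$). One small slip: the upper bound $2^{n/d}\le\const(d)\,(C_3|\la|)^2 v^{-(d-1)/d}\log^{2(d-1)}u$ follows from the \emph{left}-hand inequality of \eqref{4.10} (which bounds $2^{n-1}$ from above), not the right-hand one you cite, though the inequality you actually write and use is the correct one.
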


\begin{sloppypar}
\begin{proof}
\begin{multline*}
x_0 + \dots + x_{n-1} = \sum_{k=0}^{n-1} \frac1{ R(2^{-k} v) }
 \log^{-(d-1)} S(2^{-k} v) \le \\
\le \frac{ \log^{-(d-1)} S(2\cdot 2^{-n} v) }{ R(2\cdot 2^{-n}v)
 } \sum_{k=0}^\infty 2^{-\frac k d} \, ;
\end{multline*}
we note that ultimately $ \log^{-(d-1)} S(2\cdot 2^{-n} v) \le 1 $
(since $ 2^{-n} v \to \infty $ by Lemma \ref{4.15}) and see that $ x_0
+ \dots + x_{n-1} \le \const(d) \cdot \( \frac{2^n}v \)^{1/d}
$. By \eqref{4.10} (the left-hand inequality),
\begin{multline*}
\frac{ \sqrt v }{ \eps |\la| } \Big( \frac{2^n}v \Big)^{1/d} <
 \frac{ \sqrt v }{ \eps |\la| } 2^{1/d} (2d)^{2d-2}  \frac{ ( C_3 |\la| )^2
 }{ v } \log^{2d-2} \frac{ \sqrt v }{ C_3 |\la| } \le \\
\le \const(d) \cdot \frac{C_3}\eps \cdot \frac{ C_3 |\la| }{ \sqrt v
 } \log^{2d-2} \frac{ \sqrt v }{ C_3 |\la| }
 \le \const(d) \cdot \frac{C_3}\eps \cdot \sqrt{ \frac{ C_3 |\la|
 }{ \sqrt v } }
\end{multline*}
when $ \frac{ \sqrt v }{ C_3 |\la| } $ is large enough (which holds
ultimately). By \eqref{***a},
$ { \frac{\eps^2}{C_3^2} \cdot \frac{ \sqrt v }{ C_3 |\la| } \to \infty
} $, whence $ \const(d) \cdot \frac{C_3}\eps \cdot \sqrt{ \frac{ C_3 |\la|
}{ \sqrt v } } \le 1 $ ultimately.
\end{proof}
\end{sloppypar}

Prop.~\ref{4.1} is thus proved.

Here is a similar lower bound.

\begin{proposition}\label{4.20}
For all $ \eps>0 $ and $ W \ge C_1 $ there exists $ C>0 $ such that for all boxes
$ B \subset \R^d $ and numbers $\la\in\R$ satisfying \eqref{4.2a},
\eqref{4.3a} there exist $n\in\{0,1,2,\dots\}$ and $\mu\in\R$ such
that
\begin{equation}
\width(B/2^n) \ge W \, , \quad
\frac1{\la^2} f_B(\la) \ge (1-\eps) \frac1{\mu^2} f_{B/2^n}(\mu) -
 \eps \, , \label{4.4'}
\end{equation}
and \eqref{4.5} holds.
\end{proposition}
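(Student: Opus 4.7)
The plan is to transpose the proof of Proposition \ref{4.1} almost verbatim, substituting Lemma \ref{2.3}(b) for Lemma \ref{2.3}(a). First I would dispose of the easy case: if $|\la| \le \eps \sqrt{S(\vol B)} \log^{-(d-1)} \vol B$, then $n=0$ and $\mu=\la$ already satisfy \eqref{4.5}, and \eqref{4.4'} is automatic because $f_B\ge 0$ forces $(1-\eps) f_B(\la)/\la^2 - \eps \le f_B(\la)/\la^2$. So I may assume the opposite inequality (the analog of \eqref{4.6}) and select the integer $n$ by the same rule \eqref{4.10}.

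The sequence $\la_0,\dots,\la_n$ (all of the same sign as $\la$) is now defined by the recursion
\[
\frac{1}{|\la_{k+1}|} = \frac{\sqrt2}{|\la_k|} + y_k, \qquad \la_0 = \la,
\]
so that $\sqrt2/|\la_k| - 1/|\la_{k+1}| = -y_k$, putting Lemma \ref{2.3}(b) into force at each step (once the widths $\width(B/2^k)$ are controlled via Corollary \ref{3.2} together with the analog of Lemma \ref{4.15}). Telescoping after division by $2^{(k+1)/2}$ yields
\[
\frac{1}{2^{n/2}|\la_n|} = \frac{1}{|\la|} + \frac{C_1}{\sqrt v} \sum_{i=0}^{n-1} \log^{d-1} S(2^{-i}v),
\]
so $|\la_k|$ decays slightly faster than the geometric rate $2^{-k/2}$, and $\mu := \la_n$ satisfies $2^{n/2}|\mu| \le |\la|$. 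Iterating Lemma \ref{2.3}(b) then supplies
\[
\frac{f_B(\la)}{|\la|\sqrt v} \ge \frac{f_{B/2^n}(\mu)}{|\mu|\sqrt{2^{-n}v}} - (x_0+\dots+x_{n-1});
\]
multiplying through by $\sqrt v/|\la|$ and using $f\ge 0$, this collapses to \eqref{4.4'} provided $2^{n/2}|\mu|/|\la| \ge 1-\eps$ and $x_0+\dots+x_{n-1} \le \eps|\la|/\sqrt v$.

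The counterparts of Lemmas \ref{4.11}, \ref{4.13}, \ref{4.15}, \ref{4.16}, \ref{4.17} transfer with essentially the same proofs, since each depends only on the magnitudes of the quantities involved, not on the sign convention of Lemma \ref{2.3}; in particular, the argument of Lemma \ref{4.16} now goes through \emph{a fortiori} because $|\mu| \le 2^{-n/2}|\la|$ is tighter than the bound $(1+\eps)2^{-n/2}|\la|$ used before. The only genuinely new estimate is the analog of \eqref{4.14}: one needs $2^{n/2}|\mu| \ge (1-\eps)|\la|$, which follows from the telescoping identity together with the bound $n\log^{d-1} v \le \const(\eps)\,\sqrt v/(C_1|\la|)$ supplied by the analog of Lemma \ref{4.13}, after adjusting $\eps$ by a harmless fixed factor at the outset. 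I expect this lower bound on $2^{n/2}|\mu|$ to be the main (mild) obstacle; every other step is routine transposition.
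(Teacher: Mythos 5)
Your proposal is correct and is essentially the paper's own proof: transpose the argument of Proposition \ref{4.1} using Lemma \ref{2.3}(b), flip the sign in the recursion for the $\la_k$ (so that $\frac1{2^{k/2}|\la_k|}=\frac1{|\la|}+\frac{C_1}{\sqrt v}\sum_{i}\log^{d-1}S(2^{-i}v)$, giving $2^{n/2}|\mu|\le|\la|$ and making Lemma \ref{4.16} hold a fortiori), and obtain $2^{n/2}|\mu|\ge(1-\eps)|\la|$ from the bound of Lemma \ref{4.13}. The paper closes that last step with the elementary inequality $\frac{1+\eps}{1+2\eps}\ge1-\eps$ rather than by adjusting $\eps$, but this is an immaterial difference.
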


\begin{proof}
As before, we deal with $C_2,C_3$ rather than $C$.
We apply Lemma \ref{2.3}(b) similarly to the proof of Prop.~\ref{4.1};
the logic is the same, but some formulas are a bit different:
\begin{equation}\tag{\ref*{4.7}$\,'$}\label{4.7'}
\frac{\sqrt2}{|\la_k|} - \frac1{|\la_{k+1}|} \le -y_k \quad \text{for
  all } k=0,\dots,n-1 \, ;
\end{equation}
\begin{equation}\tag{\ref*{4.8}$\,'$}\label{4.8'}
\frac1{ |\la_k| \sqrt{ 2^{-k} v } } f_{B/2^k} (\la_k) \ge \frac1{
|\la_{k+1}| \sqrt{ 2^{-(k+1)} v }} f_{B/2^{k+1}} (\la_{k+1}) - x_k \,
;
\end{equation}
$ 2^{n/2} |\mu| \ge (1-\eps) |\la| $ (instead of $ 2^{n/2} |\mu| \le
(1+\eps) |\la| $); \eqref{4.8'} gives $ \frac{ f_B(\la) }{ |\la|
  \sqrt v } \ge \frac{ f_{B/2^n}(\mu) }{ |\mu| \sqrt{2^{-n}v} } - \eps
\frac{ |\la| }{ \sqrt v } $, which implies the second inequality
of \eqref{4.4'}:
\[
\frac1{\la^2} f_B(\la) \ge \frac{ \sqrt v }{ |\la| } \Big( \frac{
f_{B/2^n}(\mu) }{ |\mu| \sqrt{2^{-n}v} } - \eps \frac{ |\la| }{
\sqrt v } \Big) = \frac{ 2^{n/2} |\mu| }{ |\la| } \cdot \frac{
f_{B/2^n}(\mu) }{ \mu^2 } - \eps \, .
\]
Still, $ x_k $ and $ y_k $ are the same as before; formulas
\eqref{4.6}, \eqref{4.9}, \eqref{4.10} and Lemmas \ref{4.11},
\ref{4.13} are intact. In contrast, $ \la_k $ are different:
\[
\frac1{ 2^{k/2} |\la_k| } = \frac1{|\la|} + \frac{C_1}{\sqrt v}
\sum_{i=0}^{k-1} \log^{d-1} S(2^{-i}v) \, ;
\]
for $ k = n $ we get
\begin{equation}\tag{\ref*{4.14}$\,'$}\label{4.14'}
2^{n/2} |\mu| \ge (1-\eps) |\la| \, ,
\end{equation}
since $ \frac{1+\eps}{1+2\eps} \ge 1-\eps $. In addition we note that
$ 2^{n/2} |\mu| \le |\la| $. Lemmas \ref{4.15}, \ref{4.17} are
intact. In the proof of Lemma \ref{4.16} the first line is slightly
different: $ (1+\eps) 2^{-n/2} |\la| \ge 2^{-n/2} |\la| \ge |\mu|
> \dots $
\end{proof}

\section[Quadratic approximation based on Sect.~\ref{sect3}]
  {Quadratic approximation based on Sect.~\ref{sect3}}
\label{sect5}
\emph{Third digression on boxes.}
Still, by a box we mean (unless stated otherwise) a set of the form $ B =
[0,r_1] \times \dots \times [0,r_d] \subset \R^d $ where $
r_1,\dots,r_d \in (0,\infty) $. Given a multiindex
$\al=(\al_1,\dots,\al_d)\in\{0,1,2,\dots\}^d $, we define
$|\al|=\al_1+\dots+\al_d $ and
\[
2^\al B = [0,2^{\al_1}r_1] \times \dots \times [0,2^{\al_d}r_d] \,
, \quad
2^{-\al} B = [0,2^{-\al_1}r_1] \times \dots \times
[0,2^{-\al_d}r_d] \, .
\]
Note that $ B/2^n $ is always of the form $ 2^{-\al}B $ for some
(evidently unique) $\al$, and $|\al|=n$.
Generally, $ (2^\al B)/2^{|\al|} \ne B $, but sometimes this holds.

\begin{lemma}\label{5.1}
If $ \length B < 2 \width B $, then $ (2^\al B)/2^{|\al|} = B $ for
every $\al$.
\end{lemma}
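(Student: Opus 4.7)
The plan is to proceed by induction on $n = |\al|$, with $B$ fixed satisfying $\length B < 2\width B$. The base case $n=0$ is trivial since $(2^0 B)/2^0 = B$. For the inductive step with $|\al| \ge 1$, I would analyze which coordinate gets halved by the single operation $2^\al B \mapsto (2^\al B)/2$: if $i = \arglength(2^\al B)$, then $(2^\al B)/2 = 2^{\al - e_i}B$, and provided $\al_i \ge 1$, this is again of the form $2^{\al'}B$ with $|\al'| = n-1$, so the inductive hypothesis finishes the job.

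The only real content of the proof is verifying that $\arglength(2^\al B)$ always points to a coordinate $i$ with $\al_i \ge 1$, and this is where the hypothesis $\length B < 2\width B$ enters. Suppose toward contradiction that $\al_i = 0$. Since $|\al| \ge 1$, there exists $j$ with $\al_j \ge 1$. Because $i$ is an $\arglength$, we have $r_i = 2^{\al_i} r_i \ge 2^{\al_j} r_j \ge 2 r_j$. But $r_j \ge \width B$ and $r_i \le \length B$, so
\[
\length B \ge r_i \ge 2 r_j \ge 2 \width B \, ,
\]
contradicting the hypothesis. Hence $\al_i \ge 1$, as required.

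Putting the two pieces together: $(2^\al B)/2^n = ((2^\al B)/2)/2^{n-1} = (2^{\al - e_i}B)/2^{n-1} = B$ by induction, which completes the proof. There is no real obstacle here, as the pinched aspect ratio $\length B < 2 \width B$ is exactly the condition that prevents the halving operation on $2^\al B$ from ever trying to shrink an already-unenlarged coordinate; the hypothesis does not need to be propagated to intermediate boxes, because the argument only uses the ratio of $B$ itself to compare $r_i$ with $r_j$.
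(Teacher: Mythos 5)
Your proof is correct. It is, however, a genuinely different route from the paper's. You argue by induction on $|\al|$, showing that under the hypothesis $\length B < 2\width B$ the longest side of $2^\al B$ must be a coordinate $i$ with $\al_i \ge 1$ (otherwise $r_i \ge 2^{\al_j} r_j \ge 2 r_j$ for some enlarged $j$ would force $\length B \ge 2\width B$), so each halving step just decrements one entry of $\al$ and the induction closes; as you note, the aspect-ratio condition is only ever used for $B$ itself and need not be propagated. The paper instead invokes Lemma \ref{3.1} with $C = \width B$ applied to $2^\al B$: since all sides of $B$ lie in $[C,2C)$, the unique $n$ produced there yields $(2^\al B)/2^n = 2^{-\be}(2^\al B)$ with every $2^{\al_k-\be_k} r_k \in [C,2C)$, which forces $\be=\al$ and hence $n=|\al|$. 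Your version is self-contained and makes the mechanism explicit (which coordinate gets halved at each step, and why it is always an enlarged one); the paper's version is shorter on the page because it reuses the existence--uniqueness statement already proved and replaces the induction by the single observation that multiplying a number of $[C,2C)$ by a nonzero power of $2$ leaves that interval. Both are complete proofs of the lemma.
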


\begin{proof}
Lemma \ref{3.1} applied to the number $ C = \width B $ and the box $
2^\al B $ gives $n$ such that $ C \le \width\( (2^\al
B)/2^n \) \le \length\( (2^\al B)/2^n \) < 2C $. We know that $ (2^\al
B) / 2^n = 2^{-\be} (2^\al B) $ for some $\be$, and $ |\be|=n $. For
each $ k \in \{1,\dots,d\} $ we have $ r_k \in [C,2C) $ and $
2^{\al_k-\be_k} r_k \in [C,2C) $, which readily implies $ \be = \al $.
\end{proof}

\smallskip

\emph{Back to random fields.}
The constant $ C_1 $, the functions $ R(\cdot), S(\cdot) $ and the
random field $X$ are as before.
Here is some convergence of functions $ f_B $ to a quadratic
function, as $ \width B \to \infty $.

\begin{proposition}\label{pr}
There exist $ \si \ge 0 $ and $ C>0 $ such that for every $ \eps>0 $
there exists $ W>0 $ such that the following inequality holds for all
boxes $B$ such that $ \width B \ge W $ and all $\la$ such that $
C|\la| \le \sqrt{S(\vol B)} \log^{-(d-1)} S(\vol B) $:
\[
\bigg| f_B(\la) - \frac12 \si^2 \la^2 \bigg| \le \eps \la^2 \, .
\]
\end{proposition}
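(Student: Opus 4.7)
The plan is to combine Propositions~\ref{3.3} and \ref{3.11} with the smoothness of $f_B$ near the origin. Since $X$ is centered and the exponential bound in Lemma~\ref{2.1} iterates to give $\Ex\exp(t\int_B X_t\,\D t) < \infty$ on a neighborhood of $t=0$ whenever $\width B \ge C_1$, the function $f_B$ is analytic at $0$ with $f_B(0)=f_B'(0)=0$ and $f_B''(0) = \tfrac{1}{\vol B}\Var\int_B X_t\,\D t \in [0,\infty)$. Set $a(B):=\tfrac12 f_B''(0)$; then $f_B(\la) = a(B)\la^2+o(\la^2)$ as $\la\to0$, so $\limsup_{\la\to 0}f_B(\la)/\la^2 = \liminf_{\la\to 0}f_B(\la)/\la^2 = a(B)$.

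Applied with $a=a(B/2^n)+\eta$ on a sufficiently small $[-\de(\eta),\de(\eta)]$ and letting $\eta\downarrow 0$, Proposition~\ref{3.3} yields $a(B) \le a(B/2^n)+\eps$; Proposition~\ref{3.11} symmetrically yields $a(B) \ge a(B/2^n)-\eps$. Hence $|a(B)-a(B/2^n)| \le \eps$ whenever the volume/width hypotheses of those propositions hold. Lemma~\ref{5.1} then propagates the inequality outward: every balanced $B_0$ satisfies $(2^\al B_0)/2^{|\al|}=B_0$, so $|a(2^\al B_0)-a(B_0)|\le\eps$ for $|\al|$ in the valid range. Conversely, Lemma~\ref{3.1} ensures every sufficiently large $B$ has a balanced halving $B/2^n$ of prescribed volume. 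Iterating these comparisons across geometric volume scales yields a common limit $\sigma^2/2 := \lim_{\vol B\to\infty}a(B)$.

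With $\sigma^2/2$ in hand, Proposition~\ref{pr} follows from another application of the same propositions. Given $\eps>0$, pick $V$ and then $n$ per Propositions~\ref{3.3}/\ref{3.11} with error $\eps$, and choose $W$ large enough that $\width B \ge W$ forces $B/2^n$ to be balanced of volume in $[V,V/\eps]$ and, by the previous step, $|a(B/2^n)-\sigma^2/2|\le\eps$. The two propositions then sandwich $f_B(\la)$ between $(\sigma^2/2-2\eps)\la^2$ and $(\sigma^2/2+2\eps)\la^2$ on $|\la|\le\Delta$ with $\Delta$ proportional to $\sqrt{S(\vol B)}\log^{-(d-1)}S(\vol B)$, the implicit constant depending only on $\sigma^2$ and $C_1$. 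Taking $C$ to absorb that constant and rescaling $\eps$ yields the conclusion.

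The main obstacle is the convergence step: Propositions~\ref{3.3}/\ref{3.11} compare $a$ only along a single halving sequence $(B/2^k)_k$, while two balanced boxes of the same volume but different shape typically share no common such sequence. Closing this gap requires playing the upper and lower bounds against each other---from small balanced $B_0$ upward to $2^\al B_0$ via Prop~\ref{3.3} and from huge $B$ downward to $B/2^n$ via Prop~\ref{3.11}---so that together the possible values $a(B_0)$ over all balanced $B_0$ in any fixed geometric volume range are pinned down into a single $\eps$-interval around $\sigma^2/2$.
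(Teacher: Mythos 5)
Your outline reproduces the first half of the paper's argument (compare $a(B)$ with $a(B/2^n)$ via Prop.~\ref{3.3} and Prop.~\ref{3.11}, using Lemma~\ref{5.1} and Lemma~\ref{3.1} to move between small balanced boxes and their dyadic blow-ups), but the step you yourself flag as ``the main obstacle'' is a genuine gap, and the strategy you sketch for it cannot close it. Halving sequences never leave a dyadic similarity class: $B/2^n$ is always $2^{-\be}B$ for some multiindex $\be$, so starting from $2^\al B_0$ and halving back down you only ever reach boxes whose sides are dyadic multiples of the sides of $B_0$ (this is exactly Lemma~\ref{5.1}). Two balanced boxes $B_0$, $B_1$ of comparable volume but incommensurable side ratios are never connected by any combination of blow-ups and halvings, so ``playing the upper and lower bounds against each other'' pins down at most a shape-dependent limit $\si_{B_0}$ along each dyadic family, not a single $\si$. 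The paper needs a genuinely new ingredient here: it invokes the unequal-splitting estimate \cite[Lemma 2.10]{II} along one coordinate to prove the additivity $(r+s)\si^2_{r+s}=r\si^2_r+s\si^2_s$, and combines this with Borel measurability of $B\mapsto\si_B$ (Lemma~\ref{5.8}) to conclude that $r\mapsto r\si^2_r$ is linear, hence $\si_B$ is shape-independent. Your proposal contains no substitute for this step.

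There is a second, smaller gap in the final assembly. To apply Prop.~\ref{3.3}/\ref{3.11} downward from an arbitrary large $B$ you must fix $\de$ \emph{before} $N$, and the hypothesis $f_{B/2^n}(\la)\le a\la^2$ on $[-\de,\de]$ must hold with that one $\de$ uniformly over the continuum of possible small boxes $B/2^n$ (balanced, volume in $[V,V/\eps]$). Your argument supplies only a per-box $\de(\eta)$ from the Taylor expansion of $f_{B/2^n}$ at $0$, which is not uniform; this is precisely what the paper's Lemmas~\ref{5.12} and~\ref{5.11} (via \cite[Prop.~2.3]{II} and \cite[Lemma 2d1]{I}) provide, by bounding the oscillation of $\la\mapsto f_B(\la)/\la^2$ on $[-\de,\de]$ by $C\de$ uniformly over all admissible boxes. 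This second gap is repairable with tools already in the papers, but as written the proposal does not establish the uniformity it needs, and without the additivity argument it does not produce a single $\si$ at all.
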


Similarly to \cite[Sect.~3]{II} (and dissimilarly to \eqref{4.2}) we
denote by $ C_2 $ the constant given by \cite[Prop.~2.3]{II} (it
depends on the random field $X$ only); $ C_2 \ge 1 $.
We'll prove that the claim of Prop.~\ref{pr} (``for every $\eps>0$
there exists\dots'') holds \emph{ultimately}
in the following sense: it admits a sufficient condition of the
form\footnote{%
 This is the meaning of the word ``ultimately'' throughout this
 section (and only in this section).}
\[
C \ge C_0 (d,C_1,C_2) \, .
\]

\begin{lemma}\label{5.12}
Ultimately, if $ \vol B \ge C $ and $ \width B \ge C_2 $, then $
f_B(\la) \le C_2 \la^2 $ for all $ \la \in [-\frac1{C_2}, \frac1{C_2}]
$; in particular, $ f_B \( \pm\frac1{C_2} \) \le \frac1{C_2} \le 1 $. 
\end{lemma}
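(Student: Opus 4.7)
The plan is to use Proposition~\ref{eight} to propagate a baseline quadratic bound, supplied by \cite[Prop.~2.3]{II}, from a halved box $B/2^n$ of controlled volume up to the original box $B$.

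First, I would unpack \cite[Prop.~2.3]{II} (the source of $C_2$): it provides some fixed $V_0$, an initial constant $a_0 < C_2$, and $\de_0 > 0$ such that $f_{B_0}(\mu) \le a_0 \mu^2$ for all $|\mu| \le \de_0$ whenever $B_0$ is a box with $\width B_0 \ge C_2$ and $\vol B_0$ lying in an interval of the form $[V_0, V_0/\eps]$ for $\eps = C_2 - a_0$. The key point is that $a_0$ is strictly below $C_2$, leaving slack $\eps$ to absorb the error incurred by the iteration. (If needed, one preliminary application of Proposition~\ref{eight} can be used to shrink the starting constant; the parametric conditions $a_0 \le 1/\eps$ of Proposition~\ref{eight} are arranged at this stage.)

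Second, given $B$ with $\vol B \ge C$ and $\width B \ge C_2$, I would choose the unique $n \in \{0,1,2,\dots\}$ such that $2^{-n} \vol B \in [V_0, V_0/\eps]$; this $n$ exists as long as $C \ge V_0/\eps$. By Corollary~\ref{3.2}, the condition $\width(B/2^n) \ge C_2 \ge C_1$ is preserved provided $C_2^d\, 2^{n-1} \le 2^{-d}\vol B$, which holds ultimately in $C$. Thus the hypothesis \eqref{3.4} of Proposition~\ref{eight} is in place with data $(\eps, V_0, \de_0, n, B, a_0)$.

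Third, Proposition~\ref{eight} yields $f_B(\la) \le (a_0 + \eps)\la^2 = C_2\, \la^2$ for $\la \in [-\De,\De]$, where $\De = \frac{1}{C_1 \sqrt{C_2}}\sqrt{S(\vol B)}\log^{-(d-1)}S(\vol B)$. Since $\sqrt{S(v)}\log^{-(d-1)} S(v) \to \infty$ as $v\to\infty$, ultimately $\De \ge 1/C_2$, which extends the bound to the full interval $[-1/C_2, 1/C_2]$ required. The ``in particular'' clause is immediate: $f_B(\pm 1/C_2) \le C_2 \cdot C_2^{-2} = 1/C_2$, and $C_2 \ge 1$ gives $1/C_2 \le 1$.

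The main obstacle is reconciling the two parametric constraints that Proposition~\ref{eight} imposes, namely $a \le 1/\eps$ together with the ability to choose $\eps = C_2 - a_0$; if the direct form of \cite[Prop.~2.3]{II} does not supply an initial $a_0$ small enough to make $a_0(C_2 - a_0) \le 1$, this can be remedied either by absorbing a preliminary lift (which replaces $a_0$ by any slightly larger value, after which one rescales the target constant) or by invoking the ultimate clause of Proposition~\ref{eight} over a fresh $\eps$ smaller than $C_2 - a_0$.
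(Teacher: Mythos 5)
There is a genuine gap, in fact two. First, your whole construction hinges on the assumption that \cite[Prop.~2.3]{II} supplies a seed bound $f_{B_0}(\mu)\le a_0\mu^2$ with some $a_0$ \emph{strictly below} $C_2$, leaving slack to absorb the $+\eps$ that Prop.~\ref{eight} adds. That is not what the cited result gives: $C_2$ is by definition the constant of \cite[Prop.~2.3]{II}, and its quantitative form \cite[(3.1)]{II} reads $f_B(\la)\le C_2\la^2$ whenever $C_2|\la|\le\sqrt{S(\vol B)}\log^{-(d-1)}S(\vol B)$ and $\width B\ge C_2$ --- constant exactly $C_2$, no slack. Since Prop.~\ref{eight} only ever worsens an upper bound ($a\mapsto a+\eps$), your route can at best deliver $f_B(\la)\le(C_2+\eps)\la^2$, which is not the statement of the lemma; neither of your proposed remedies works (a ``preliminary application'' of Prop.~\ref{eight} cannot shrink the constant, and ``rescaling the target constant'' changes the lemma, whose $C_2$ is fixed --- the only free parameter hidden in ``ultimately'' is $C$). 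Second, even granting the constant, your extension of the $\la$-range fails for $d=1$: you argue that $\De\ge 1/C_2$ ultimately because $\sqrt{S(v)}\log^{-(d-1)}S(v)\to\infty$, but for $d=1$ one has $S(v)\equiv 1$ (the paper's convention $x^0=1$), so this quantity is identically $1$ and $\De=\frac1{C_1\sqrt{\,\cdot\,}}$ does not grow with $\vol B$; no choice of large $C$ fixes that.

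Beyond these two defects, the detour through Section~\ref{sect3} is unnecessary: \cite[(3.1)]{II} already applies to the box $B$ itself, for all volumes, with a $\la$-range $C_2|\la|\le\sqrt{S(\vol B)}\log^{-(d-1)}S(\vol B)$. The paper's proof is just that citation plus the observation that, ultimately (i.e.\ for $C$ large, and trivially with value exactly $1$ when $d=1$), $\inf_{v\ge C}\sqrt{S(v)}\log^{-(d-1)}S(v)\ge 1$, so the allowed range contains $[-\frac1{C_2},\frac1{C_2}]$; the ``in particular'' clause then follows from $C_2\ge1$ as you noted.
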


\begin{proof}
By \cite[(3.1)]{II}, $ f_B(\la) \le C_2 \la^2 $ whenever $ C_2 |\la|
\le \frac{ \sqrt{S(v)} }{ \log^{d-1} S(v) } $ and $ \width B \ge C_2 $;
here $ v = \vol B $. Ultimately,
$ \inf_{v\in[C,\infty)} \sqrt{S(v)} \log^{-(d-1)} S(v) \ge 1 $.
\end{proof}

\emph{Remark.} For $d>1$ we may do better, taking $C$ such that the
infimum exceeds (say) $ \sqrt{C_2} $. But for $d=1$ this infimum is
always $1$.

\begin{lemma}\label{5.11}
Ultimately, for all numbers $ \de>0 $ and boxes $ B
\subset \R^d $ satisfying $ 2C_2\de \le 1 $, $ \vol B \ge C $ and $
\width B \ge C_2 $ holds
\[
\sup_{0<|\la|\le\de} \frac1{\la^2} f_B(\la) - \inf_{0<|\la|\le\de}
\frac1{\la^2} f_B(\la) \le C \de \, .
\]
\end{lemma}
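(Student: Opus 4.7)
The plan is to obtain the oscillation bound by establishing uniform Lipschitz continuity of $\mu(\la):=f_B(\la)/\la^2$ on $[-\de,\de]\setminus\{0\}$, with Lipschitz constant depending only on $C_2$ (the dependence on $d,C_1$ entering through the ``ultimately'' of Lemma~\ref{5.12}).

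Since $X$ is centred, $f_B(0)=f_B'(0)=0$, and Taylor's formula with integral remainder followed by the substitution $s=u\la$ gives the identity
\[
\mu(\la)=\int_0^1(1-u)\,f_B''(u\la)\,\D u\quad\text{for }\la\ne 0.
\]
A second application of the fundamental theorem of calculus then yields, for $\la_1,\la_2\in[-\de,\de]\setminus\{0\}$,
\[
|\mu(\la_1)-\mu(\la_2)|\le\tfrac{1}{6}|\la_1-\la_2|\sup_{|s|\le\de}|f_B'''(s)|.
\]
Taking the supremum over $\la_1,\la_2$ (so that $|\la_1-\la_2|\le 2\de$) bounds the oscillation by $\tfrac{\de}{3}\sup_{|s|\le 1/(2C_2)}|f_B'''(s)|$, and the whole task reduces to bounding $|f_B'''|$ on $[-1/(2C_2),1/(2C_2)]$ by a constant depending only on $C_2$.

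For this I switch to complex analysis. Write $M_B(\la):=\E^{f_B(\la)}$. By Lemma~\ref{5.12} (applicable ultimately), $M_B(\la)\le\E^{1/C_2}$ for real $|\la|\le 1/C_2$, and the trivial pointwise estimate $|\E^{\la Y}|=\E^{(\Re\la)Y}$ upgrades this to $|M_B(a+\I b)|\le M_B(a)\le\E^{1/C_2}$ for all real $a,b$ with $|a|\le 1/C_2$. Hence $M_B$ is analytic and bounded by $\E^{1/C_2}$ on the complex strip $|\Re\la|<1/C_2$. Applying Cauchy's inequality on disks of radius $\tfrac{1}{2C_2}-\eta$ around real points with $|\la_0|\le\tfrac{1}{2C_2}-\eta$, then letting $\eta\downarrow 0$ and invoking continuity of $M_B^{(k)}$ inside the open strip, I obtain
\[
|M_B^{(k)}(\la_0)|\le k!\,(2C_2)^k\,\E^{1/C_2}\quad\text{for }k=1,2,3\text{ and }|\la_0|\le 1/(2C_2).
\]
Combined with $M_B\ge 1$ on the reals (Jensen) and the chain-rule identity $f_B'''=M_B'''/M_B-3M_B'M_B''/M_B^2+2(M_B'/M_B)^3$, this yields $\sup_{|\la|\le 1/(2C_2)}|f_B'''(\la)|\le C'(C_2)$ with $C'$ an explicit polynomial in $C_2$. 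Picking $C\ge C'(C_2)/3$---which holds ultimately since $C$ may depend on $d,C_1,C_2$---closes the argument.

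The step I expect to be most delicate is the analytic-extension one: verifying that the real-line quadratic majorant of Lemma~\ref{5.12} genuinely produces pointwise control on a complex strip of the same width, and handling the minor boundary nuisance that the Cauchy disks of radius $1/(2C_2)$ around real points of modulus $1/(2C_2)$ only sit inside the closed, not the open, strip. Both points are dispatched by the identity $|\E^{\la Y}|=\E^{(\Re\la)Y}$ and by a straightforward $\eta\downarrow 0$ limit respectively; everything else is elementary chain-rule bookkeeping.
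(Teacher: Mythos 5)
Your proof is correct, but it takes a genuinely different route from the paper. The paper simply rescales ($f(\la)=f_B(\la/C_2)$, so $f(\pm1)\le 1$ by Lemma \ref{5.12}) and quotes the ready-made estimate \cite[Lemma 2d1]{I}, which compares $\frac1{\la^2}f(\la)$ with $\frac12 f''(0)$ up to a cubic error; this immediately gives the oscillation bound $\frac{82}{3\E^2}(2C_2)^3\de$, and the lemma holds once $C\ge\frac{82}{3\E^2}(2C_2)^3$. You instead re-derive the needed third-derivative control from scratch: the quadratic majorant of Lemma \ref{5.12} bounds $M_B=\E^{f_B}$ by $\E^{1/C_2}$ on the real interval $[-1/C_2,1/C_2]$, the pointwise identity $|\E^{\la Y}|=\E^{(\Re\la)Y}$ extends this to the complex strip $|\Re\la|<1/C_2$ (analyticity there is standard Morera/dominated-convergence), Cauchy estimates bound $M_B',M_B'',M_B'''$ on $[-1/(2C_2),1/(2C_2)]$, and with $M_B\ge1$ on $\R$ the logarithmic-derivative identity bounds $|f_B'''|$ by a constant $C'(C_2)$; Taylor with integral remainder (using $f_B(0)=f_B'(0)=0$, which rests on centeredness exactly as the paper's own remark $f_B\ge0$ does) then makes $\la\mapsto f_B(\la)/\la^2$ Lipschitz with constant $C'(C_2)/6$, giving oscillation $\le\frac{\de}{3}C'(C_2)$, absorbed into $C$ ``ultimately''. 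What each approach buys: the paper's proof is two lines given the earlier work in \cite{I} and yields a cleaner explicit constant; yours is self-contained within the present paper (apart from Lemma \ref{5.12}) at the cost of the analytic-continuation machinery and a messier constant. Two cosmetic remarks: the appeal to ``continuity of $M_B^{(k)}$'' is unnecessary, since the Cauchy bound at a fixed real $\la_0$ with radius $1/(2C_2)-\eta$ already converges as $\eta\downarrow0$; and you should say a word on why $f_B'(0)=0$ (differentiation under the expectation, legitimate because the MGF is finite on a neighbourhood of $0$, plus $\Ex\int_B X_t\,\D t=0$ for a CMS field).
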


\begin{proof}
We apply \cite[Lemma 2d1]{I} to the random variable
$ \frac1{C_2\sqrt{\vol B}} \int_B X_t \, \D t $ and its cumulant
generating function $ f(\la) = \log \Ex \exp \frac{\la}{C_2\sqrt{\vol
B}} \int_B X_t \, \D t = f_B \( \frac{\la}{C_2} \) $.
Taking into account that $ f(\pm1) \le 1 $ by Lemma \ref{5.12}, we get
\[
\Big| \frac1{\la^2} f(C_2\la) - \frac{C_2^2}2 f''(0) \Big| \le
\frac{41}{6\E^3} \frac1{\la^2} \Big( \frac{ C_2|\la| }{ 1 - C_2|\la| }
\Big)^3 \cdot 2\E \quad \text{for } 0 < C_2 |\la| < 1 \, .
\]
Thus,
\[
\sup_{0<|\la|\le\de} \frac1{\la^2} f_B(\la) - \inf_{0<|\la|\le\de}
\frac1{\la^2} f_B(\la) \le \frac{82}{3\E^2} \frac{C_2^3 \de}{(1-C_2\de)^3}
\]
for $ C_2\de < 1 $; and for $ 2C_2\de \le 1 $ the right-hand side does not
exceed $ \frac{82}{3\E^2} (2C_2)^3 \de $. Ultimately,
$ C \ge \frac{82}{3\E^2} (2C_2)^3 $.
\end{proof}

We consider the quadratic bounds (lower and upper)
\begin{equation}\label{qb}
L(B) \la^2 \le f_B(\la) \le U(B) \la^2 \quad \text{for }
|\la| \le \De(\vol B) \, ,
\end{equation}
where $ \De(v) = \frac1C \sqrt{S(v)} \log^{-(d-1)} S(v) $ and
\[
L(B) = \inf_{0<|\la|\le \De(\vol B)} \frac1{\la^2} f_B(\la) \, , \quad
U(B) = \sup_{0<|\la|\le \De(\vol B)} \frac1{\la^2} f_B(\la) \, .
\]

\begin{lemma}\label{5.4}
Ultimately, for every $ \eps>0 $ there exist $W$ and $N$ such that the
inequality
\[
\sup_{|\al|\ge N} U(2^\al B) - \inf_{|\al|\ge N} L(2^\al
B) \le \eps
\]
holds for all boxes $B$ satisfying $ \length B < 2\width B $ and $
W \le \width B \le \length B \le \frac{W}{\eps^{1/d}} $.
\end{lemma}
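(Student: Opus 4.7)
The plan is to fix a box $B$ satisfying the hypotheses, extract from $f_B$ a quadratic approximation $f_B(\la) \approx a\la^2$ valid on a small interval $[-\de,\de]$, and then \emph{propagate} this two-sided approximation to every dilate $2^\al B$ using the multi-step bounds of Section~\ref{sect3}. The key enabling fact is Lemma~\ref{5.1}: since $\length B < 2\width B$, one has $(2^\al B)/2^{|\al|} = B$ exactly, so the iterated halving brings us all the way back to the same $B$ rather than to some uncontrolled ancestor, and the propositions then give essentially the same $a$ for every $\al$.

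Given $\eps$, first I would choose $\eps'$ and $\eps''$ of order $\eps$ (with $2\eps''+\eps' \le \eps$) and pick $\de>0$ satisfying $2C_2\de \le 1$ and $C\de \le \eps'$. Lemma~\ref{5.11}, applied to $B$ (whose volume is ultimately $\ge C$ and whose width is $\ge W \ge C_2$), then gives the oscillation bound $\sup_{0<|\la|\le\de} f_B(\la)/\la^2 - \inf_{0<|\la|\le\de} f_B(\la)/\la^2 \le \eps'$. Setting $a$ equal to this infimum, Lemma~\ref{5.12} places $a$ in $[0,C_2]$, and the resulting two-sided estimate
\[
a\la^2 \le f_B(\la) \le (a+\eps')\la^2 \quad (|\la|\le\de)
\]
is precisely the hypothesis needed to feed into the multi-step propositions.

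Next I would take $V := W^d$, insist that $\eps''$ also satisfies $\eps'' \le 1/C_2$ and $\vol B \le V/\eps''$ (the latter ultimately true since $\vol B \le 2^d W^d/\eps$), and pull from Propositions~\ref*{3.3}a and~\ref{3.11} a common threshold $N = N(\eps'',V,\de,d,C_1)$. For every multiindex $\al$ with $|\al| \ge N$, write $n = |\al|$; then $(2^\al B)/2^n = B$ by Lemma~\ref{5.1}, and applying Proposition~\ref*{3.3}a with constant $a+\eps'$ in the role of its ``$a$'' and Proposition~\ref{3.11} with constant $a$ yields
\[
(a-\eps'')\la^2 \le f_{2^\al B}(\la) \le (a+\eps'+\eps'')\la^2
\]
for $|\la|$ up to a bound of the form $\frac{1}{C_1\sqrt{z}}\sqrt{S(\vol(2^\al B))}\log^{-(d-1)}S(\vol(2^\al B))$ with $z \in [a,a+\eps'+\eps'']$.

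Finally, in order that this $\la$-window contain the window $|\la|\le\De(\vol(2^\al B))=\frac{1}{C}\sqrt{S(\vol(2^\al B))}\log^{-(d-1)}S(\vol(2^\al B))$ entering the definitions of $U(\cdot)$ and $L(\cdot)$, it suffices to have $C \ge C_1\sqrt{a+\eps'+\eps''}$; since $a \le C_2$ and $\eps',\eps''\le 1$, this holds ultimately with $C_0(d,C_1,C_2) \ge C_1\sqrt{C_2+2}$. Then $L(2^\al B) \ge a-\eps''$ and $U(2^\al B) \le a+\eps'+\eps''$ for every $\al$ with $|\al|\ge N$, giving $\sup_{|\al|\ge N} U(2^\al B) - \inf_{|\al|\ge N} L(2^\al B) \le 2\eps''+\eps' \le \eps$. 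The main technical obstacle is the simultaneous calibration of $\eps''$, $V$, $\de$ and $N$ so that the hypotheses of Propositions~\ref*{3.3}a and~\ref{3.11} hold \emph{uniformly} in $\al$; this works because $B$ is fixed while $\vol(2^\al B)=2^{|\al|}\vol B$ only grows, so the ``ultimately'' quantifier built into those propositions absorbs the uniformity for us.
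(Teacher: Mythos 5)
Your proposal is correct and follows essentially the same route as the paper: use Lemma \ref{5.1} (via $\length B<2\width B$) to identify $(2^\al B)/2^{|\al|}=B$, extract a two-sided quadratic bound for $f_B$ near $0$ from Lemmas \ref{5.12} and \ref{5.11}, propagate it to $2^\al B$ with Propositions \ref*{3.3}a and \ref{3.11}, and absorb the window mismatch by taking $C\ge C_1\sqrt{C_2+\const}$ ``ultimately.'' The only differences are cosmetic (the paper works with $a_{\min},a_{\max}$ and proves the bound with $3\eps$ instead of splitting $\eps$ into $\eps',\eps''$ in advance), plus minor bookkeeping you leave implicit, such as choosing $W$ so that $W^d$ exceeds $C$ and the thresholds $V_\eps$ of both propositions.
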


\begin{proof}
We'll prove rather that $ \sup\dots - \inf\dots \le 3\eps $.
Without loss of generality we assume that $ \eps \le 2^{-d} $ and
$ \eps \le \frac1{C_2} $. 
We denote by $ V_\eps^{\text{\ref*{3.3}}} $ the constant $V_\eps$
implicit in Prop.~\ref{3.3} (and explicit in Prop.~\ref{eight}); $
V_\eps^{\text{\ref*{3.11}}} $ is the $V_\eps$ implicit in
Prop.~\ref{3.11}.

We take $W$ such that $ W \ge C_2 $ and the number $ V = W^d $
satisfies $ V \ge C $, $ V \ge V_\eps^{\text{\ref*{3.3}}} $ and $
V \ge V_\eps^{\text{\ref*{3.11}}} $.

We take $ \de = \frac \eps C $.
For $\eps$, $V$ and $\de$ Prop.~\ref{eight} gives  a natural $N$,
denote it $ N_{\text{\ref*{3.3}}} $; similarly, Prop.~\ref{3.11} gives
$ N_{\text{\ref*{3.11}}} $; we take $ N = \max( N_{\text{\ref*{3.3}}},
N_{\text{\ref*{3.11}}} ) $.

We introduce $ a_{\min} = \inf_{0<|\la|\le\de} \frac1{\la^2} f_B(\la)
$, $ a_{\max} = \sup_{0<|\la|\le\de} \frac1{\la^2} f_B(\la)
$. Lemma \ref{5.12} applies (since $ \width B \ge W \ge C_2 $ and
$ \vol B \ge W^d = V \ge C $),
giving $ a_{\max} \le C_2 $ (since $ \de \le \eps \le \frac1{C_2} $).

Ultimately $ C \ge 2 C_2 $. Lemma~\ref{5.11} applies to $\de$ and $B$
(since $ 2 C_2 \de \le C \de = \eps \le C_2 \eps \le 1 $, $ \vol B \ge
C $ and $ \width B \ge C_2 $), giving $ a_{\max} - a_{\min} \le \eps
$. It is sufficient to prove for all $\al$ satisfying $ |\al| \ge N $
two inequalities
\[
L(2^\al B) \ge a_{\min} - \eps \, ,
\quad 
U(2^\al B) \le a_{\max} + \eps \, .
\]
Denoting $ n = |\al| $ we have $ (2^\al B)/2^n = B $ by
Lemma \ref{5.1} (since $ \length B < 2 \width B $). Prop.~\ref{eight}
applies to $ \eps $, $V$, $\de$, $n$, $ a_{\text{\ref*{3.3}}} =
a_{\max} $ and $ B_{\text{\ref*{3.3}}} = 2^\al B $ (since $ V \ge
V_\eps^{\text{\ref*{3.3}}} $, $ n = |\al| \ge N \ge
N_{\text{\ref*{3.3}}} $, $ a_{\max} \le C_2 \le \frac1{\eps} $, $ 2^n
V \le \vol(2^\al B) = 2^n \vol B \le 2^n (\length B)^d \le \frac1\eps
\cdot 2^n V $, and $ \width (2^\al B) \ge \width B \ge C_2 \ge C_1 $),
giving $ U(2^\al B) \le a_{\max} + \eps $ (since $
C_1 \sqrt{a_{\max}+\eps} \le C_1 \sqrt{ C_2 + \frac1{C_2} } \le C $
ultimately). Similarly, Prop.~\ref{3.11} gives $ L(2^\al B) \ge
a_{\min} - \eps $.
\end{proof}

\begin{lemma}\label{5.6}
Ultimately, for every box $B$ the two limits
$ \lim_{\al_1,\dots,\al_d\to\infty} L(2^\al B) $,
$ \lim_{\al_1,\dots,\al_d\to\infty} U(2^\al B) $ exist and are
equal. (Here $ \al = (\al_1,\dots,\al_d) $.)
\end{lemma}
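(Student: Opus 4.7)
\emph{Proof plan.} The plan is to deduce Lemma~\ref{5.6} from Lemma~\ref{5.4} by first halving $B$ down to a near-cubical reduction. I would fix $\eps \in (0, 2^{-d}]$ and let $W$ and $N$ be the constants supplied by Lemma~\ref{5.4} for this $\eps$. Because convergence as $\alpha_1,\dots,\alpha_d \to \infty$ only concerns a tail of multiindices, I would first pass WLOG to the case $\width B \ge W$, replacing $B$ by $2^{\alpha^{(0)}}B$ for some fixed large $\alpha^{(0)}$ if necessary (this does not affect the existence or value of the limits).

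Next, I would apply Lemma~\ref{3.1} to $B$ with $C = W$, producing the unique $n_0$ for which $B_0 := B/2^{n_0}$ satisfies $W \le \width B_0 \le \length B_0 < 2W$. The iterative halving procedure yields a unique multiindex $\gamma \ge 0$ with $|\gamma| = n_0$ and $B = 2^\gamma B_0$. One then verifies that $B_0$ meets every hypothesis of Lemma~\ref{5.4}: it is near-cubical ($\length B_0 < 2W \le 2\width B_0$), it has $\width B_0 \ge W$, and $\length B_0 < 2W \le W/\eps^{1/d}$ (the last inequality using $\eps \le 2^{-d}$). Lemma~\ref{5.4} then supplies
\[
\sup_{|\beta|\ge N} U(2^\beta B_0) - \inf_{|\beta|\ge N} L(2^\beta B_0) \le \eps \, .
\]

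The finish is pure transcription: for any $\alpha \ge 0$, setting $\beta = \alpha + \gamma$ gives $2^\alpha B = 2^{\alpha+\gamma} B_0 = 2^\beta B_0$, so $L(2^\alpha B) = L(2^\beta B_0)$ and $U(2^\alpha B) = U(2^\beta B_0)$. As $\alpha_1,\dots,\alpha_d \to \infty$ we have $|\beta| = |\alpha| + |\gamma| \to \infty$, so $|\beta| \ge N$ eventually; for all such $\alpha$ both $L(2^\alpha B)$ and $U(2^\alpha B)$ lie in a common interval of length at most $\eps$. Combined with $L \le U$, this forces $\limsup_\alpha U(2^\alpha B) - \liminf_\alpha L(2^\alpha B) \le \eps$, and since $\eps > 0$ is arbitrary, both limits exist and coincide.

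The main obstacle is essentially bookkeeping: ensuring that the canonical near-cubical reduction $B_0$ fits inside the narrow window $[W, W/\eps^{1/d}]$ demanded by Lemma~\ref{5.4}, which is precisely why the preliminary normalization $\width B \ge W$ and the restriction $\eps \le 2^{-d}$ are needed. Everything else is a clean translation through the bijection $\beta \mapsto \alpha = \beta - \gamma$ on multiindices.
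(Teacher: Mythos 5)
Your proposal is correct and follows essentially the same route as the paper: reduce WLOG (via the multiindex shift, which does not change the limits) to a box with $\width B \ge W$, then use Lemma \ref{3.1} to pass to a near-cubical box satisfying the hypotheses of Lemma \ref{5.4}, and transfer the resulting oscillation bound back through the shift $\be = \al + \ga$. The paper phrases the two reductions as WLOG steps justified by the identity $\limsup U(2^\al B) = \limsup U(2^\al 2^\be B)$, while you carry the shift explicitly; the content is the same.
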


\begin{sloppypar}
\begin{proof}
Clearly, for every box $B$ and multiindex $ \be \in \{0,1,\dots\}^d $,
\[
\limsup_{\al_1,\dots,\al_d \to \infty} U(2^\al B)
= \limsup_{\al_1,\dots,\al_d \to \infty} U(2^\al 2^\be B)
\]
(here $ \limsup_{\al_1,\dots,\al_d \to \infty} \dots $ means
$ \lim_{n\to\infty} \sup_{\al_1,\dots,\al_d \ge n} \dots $), and
similarly $ \liminf_{\al_1,\dots,\al_d \to \infty} L(2^\al B)
= \liminf_{\al_1,\dots,\al_d \to \infty} L(2^\al 2^\be B) $. It is
sufficient to prove that $ \limsup_{\al_1,\dots,\al_d \to \infty}
U(2^\al B) - \liminf_{\al_1,\dots,\al_d \to \infty} L(2^\al
B) \le \eps $ for all $ \eps \in (0,2^{-d}] $. For $\eps$,
Lemma \ref{5.4} gives $W$ and $N$. WLOG, $ \width B \ge W $ (otherwise
use $ 2^\be B $ for appropriate $\be$).
WLOG, $ \length B < 2W $ (otherwise use $ B/2^n $ given by
Lemma \ref{3.1}). Clearly,
$ \limsup_{\al_1,\dots,\al_d \to \infty} U(2^\al
B) \le \sup_{|\al|\ge N} U(2^\al B) $, and similarly,
$ \liminf\dots \ge \inf\dots $
Lemma \ref{5.4} applies to $\eps$ and $B$ (since $ \length B <
2W \le \frac{W}{\eps^{1/d}} $), and we get
\[
\limsup_{\al_1,\dots,\al_d \to \infty} U(2^\al
B) - \liminf_{\al_1,\dots,\al_d \to \infty} L(2^\al B)
\le \sup_{|\al|\ge N} U(2^\al B) - \inf_{|\al|\ge N} L(2^\al
B) \le \eps \, . 
\vspace{-15pt}
\]
\end{proof}
\end{sloppypar}

\begin{remark}\label{5.7}
The convergence in Lemma \ref{5.6} is uniform over all boxes $B$
satisfying $ \width B \ge 1 $.

Proof. The only danger to uniformity (in the proof of Lemma \ref{5.6})
is the transition from $B$ to $2^\be B$ satisfying $ \width(2^\be
B) \ge W $. We take $ m \in \{0,1,\dots\} $ such that $ 2^m \ge W $,
and let $ \be = (m,\dots,m) $, then $ \width B \ge 1 $ implies
$ \width(2^\be B) \ge 2^m \ge W $.
\end{remark}

Ultimately, for every box $B$ we define $ \si_B \in [0,\infty) $ by
\[
\frac12 \si_B^2 = \lim_{\al_1,\dots,\al_d\to\infty} L(2^\al B) =
\lim_{\al_1,\dots,\al_d\to\infty} U(2^\al B) \, .
\]
Clearly,
\[
\si_{2^\al B} = \si_B \quad \text{for all } \al \text{ and } B \, .
\]

\begin{lemma}\label{5.8}
Ultimately, the function $ (r_1,\dots,r_d) \mapsto
\si_{[0,r_1]\times\dots\times[0,r_d]} $ from $ (0,\infty)^d $ to $
[0,\infty) $ is Borel measurable.
\end{lemma}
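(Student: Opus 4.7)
The plan is to exhibit $\si_B$ as a pointwise limit of Borel measurable functions of $(r_1,\dots,r_d)$ by unwinding the construction of $\si_B$ in Lemma \ref{5.6}.

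First I will establish joint Borel measurability of $(r_1,\dots,r_d,\la) \mapsto f_B(\la)$. Since $X$ is CMS, the map $(t,\omega) \mapsto X_t(\omega)$ is jointly measurable, so Fubini yields joint measurability of $(r_1,\dots,r_d,\omega) \mapsto \int_{[0,r_1]\times\dots\times[0,r_d]} X_t(\omega)\,\D t$. Composing with the continuous map $(y,\la,v)\mapsto\exp(\la y/\sqrt v)$ and integrating out $\omega$ via Fubini--Tonelli produces a Borel $[0,\infty]$-valued function of $(r_1,\dots,r_d,\la)$; the extended-real logarithm then gives Borel measurability of $(r_1,\dots,r_d,\la) \mapsto f_B(\la)$. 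In particular, $(r_1,\dots,r_d) \mapsto f_B(\la_0)$ is Borel measurable for each fixed $\la_0$.

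Second I will reduce $U(B)$ and $L(B)$ to countable sups and infs over rational $\la$. Because $f_B$ is convex (as a log-moment-generating function) with $f_B(0)=0$, and is finite on a neighbourhood of $0$ by Lemma \ref{5.12}, the ratio $\la \mapsto f_B(\la)/\la^2$ is continuous on the interior of the effective domain of $f_B$; hence the sup and inf defining $U(B)$ and $L(B)$ agree with the corresponding sup/inf over $\la \in \mathbb{Q}\setminus\{0\}$ with $|\la| \le \Delta(\vol B)$. For each such $\la_0$ the set $\{(r_1,\dots,r_d) : |\la_0| \le \Delta(\vol B)\}$ is closed (since $\Delta$ is continuous in $(r_1,\dots,r_d)$), so assigning $-\infty$ (for the sup) or $+\infty$ (for the inf) outside this set yields a countable family of Borel measurable functions of $(r_1,\dots,r_d)$, whose countable sup and inf represent $U$ and $L$ and are therefore Borel measurable.

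Finally, Lemma \ref{5.6} gives $\tfrac12 \si_B^2 = \lim_{n\to\infty} U(2^{(n,\dots,n)} B)$. Each $U(2^{(n,\dots,n)} B)$ is the composition of the continuous dilation $(r_1,\dots,r_d)\mapsto(2^n r_1,\dots,2^n r_d)$ with the Borel function $U$, hence Borel measurable in $(r_1,\dots,r_d)$; pointwise limits and the nonnegative square root preserve Borel measurability, completing the argument. No serious obstacle is expected here; the only mild subtlety is the countable-sup/inf reduction, which rests on continuity of the convex $f_B$ on the interior of its effective domain, a standard fact ensured by Lemma \ref{5.12}.
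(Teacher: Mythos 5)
Your argument is essentially correct, but it takes a genuinely different and heavier route than the paper. The paper never proves measurability of the sup/inf quantities $U$ and $L$ at all: it fixes a single $\la_0$ with $0<|\la_0|\le\frac1C$, observes that $(r_1,\dots,r_d)\mapsto f_B(\la_0)$ is Borel for fixed $\la_0$ (citing \cite[(1.3)]{II}, rather than re-deriving joint measurability as you do), and then uses the squeeze \eqref{qb}, $L(2^nB)\la_0^2\le f_{2^nB}(\la_0)\le U(2^nB)\la_0^2$, together with Lemma \ref{5.6} to get the pointwise convergence \eqref{converg}; thus $\si_B^2=\lim_n\frac2{\la_0^2}f_{2^nB}(\la_0)$ is a pointwise limit of single-$\la_0$ Borel functions, and no supremum over $\la$ needs to be shown measurable. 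Your route instead proves Borel measurability of $U$ and $L$ by a rational-$\la$ reduction; this works, but the justification you give --- continuity of $\la\mapsto f_B(\la)/\la^2$ on the interior of the effective domain --- does not by itself cover the case where the interval $0<|\la|\le\De(\vol B)$ reaches or leaves the effective domain: a convex function may jump up at a finite endpoint of its effective domain, so if $\De(\vol B)$ is irrational the rational supremum could in principle miss the endpoint value. The patch is standard: a cumulant generating function is lower semicontinuous (Fatou), and lower semicontinuity plus convexity give one-sided continuity at a finite boundary point and force nearby values to blow up wherever $f_B=+\infty$, so the rational reduction is indeed valid --- but this is exactly the kind of convexity/semicontinuity bookkeeping (together with the measurability of the constraint set $\{|\la_0|\le\De(\vol B)\}$) that the paper's single-$\la_0$ trick avoids entirely. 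In short: your proof buys an explicit measurability statement for $U$ and $L$ that the paper does not need, at the cost of extra analytic care; the paper's proof buys brevity by exploiting \eqref{qb} to reduce everything to one fixed $\la_0$.
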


\begin{proof}
First, for each $ \la \in \R $, the function $ (r_1,\dots,r_d) \mapsto
f_{[0,r_1]\times[0,r_d]} (\la) $ from $ (0,\infty)^d $ to $ [0,\infty]
$ is Borel measurable due to \cite[(1.3)]{II}. Second, for each $\la$
such that $ |\la| \le \frac1C $ we have\footnote{%
 We have it for all $ \la \in \R $ when $ d>2 $, but only for $
 |\la| \le \frac1C $ when $ d=1 $.}
\begin{equation}\label{converg}
f_{[0,2^n r_1]\times[0,2^n r_d]}
(\la) \to \frac12 \si^2_{[0,r_1]\times[0,r_d]} \la^2 \quad \text{as }
n \to \infty
\end{equation}
by Lemma \ref{5.6} and \eqref{qb}. It remains to apply this to a
single $ \la $ such that $ 0 < |\la| \le \frac1C $, and divide by
$\la^2 $.
\end{proof}

\begin{lemma}
Ultimately, $ \si_B $ does not depend on $B$.
\end{lemma}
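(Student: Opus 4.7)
The plan is to show that $\Phi(B) := (\vol B)\cdot\si_B^2$ is \emph{additive} under concatenation of adjacent boxes, and then to use the measurability supplied by Lemma~\ref{5.8} to conclude that $\si_B^2$ does not depend on the dimensions of $B$. The central claim will be: if $B_1,B_2 \subset \R^d$ share a face perpendicular to some coordinate axis and $B := B_1 \cup B_2$ is again a box, then
\[
\si_B^2\,\vol B \;=\; \si_{B_1}^2\,\vol B_1 \;+\; \si_{B_2}^2\,\vol B_2 \, .
\]

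The first step, which will be the main obstacle, is establishing this additivity. Splittability of $X$ should make $\int_{B_1}X_t\,\D t$ and $\int_{B_2}X_t\,\D t$ approximately independent, so in the quadratic regime $f_B(\la) \approx \tfrac12\si_B^2\la^2$ the variances should add. To turn this into a proof I would invoke the splittability of $X$ from \cite[Sect.~1]{II} in its general form (an arbitrary cut position, not just the central split exploited in Lemma~\ref{2.1}), apply it to $2^\al B$, and obtain an inequality of the form
\[
\bigl|\, f_{2^\al B}(\la) - f_{2^\al B_1}(\la_1) - f_{2^\al B_2}(\la_2) \,\bigr| \;=\; o(\la^2) \quad \text{as } \al \to \infty,
\]
where $\la_i = \la\sqrt{\vol B_i/\vol B}$, valid for $\la$ in the range of Prop.~\ref{pr}. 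Dividing by $\la^2$ and applying Lemma~\ref{5.6} term-by-term yields the additivity. Securing that the error is uniformly $o(\la^2)$ will most likely require iterating the arbitrary-cut splitting bound in the same manner as Sections~\ref{sect3}--\ref{sect4} iterate Lemma~\ref{2.1}; the adjustments are routine in spirit but nontrivial in bookkeeping, since the halving structure of Section~\ref{sect2} has to be replaced by a more flexible dyadic decomposition adapted to the position of the cut.

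Granted the additivity, the conclusion is short. Fix any $(d{-}1)$-dimensional box $B_0$ and define $\psi : (0,\infty) \to [0,\infty)$ by $\psi(r) := \Phi([0,r]\times B_0)$. Applying the additivity to $[0,r]\times B_0$ and the translate $[r, r+s]\times B_0$, and using stationarity of $X$ to identify the $\si$ of the translate with $\si_{[0,s]\times B_0}$, one obtains Cauchy's equation
\[
\psi(r+s) \;=\; \psi(r) + \psi(s) \quad (r, s > 0).
\]
Since $\psi$ is Borel measurable by Lemma~\ref{5.8}, it must be linear: $\psi(r) = cr$ for some $c \ge 0$. Hence $\si_{[0,r]\times B_0}^2$ is independent of $r$, and iterating this over each of the $d$ coordinate directions yields constancy of $\si_B^2$ across all boxes.
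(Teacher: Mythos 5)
Your overall strategy --- additivity of $(\vol B)\,\si_B^2$ under cutting along a coordinate hyperplane, then measurability (Lemma~\ref{5.8}) plus Cauchy's functional equation, one coordinate at a time --- is exactly the paper's. The gap is the step you yourself flag as ``the main obstacle'' and defer to a hypothetical re-run of Sections~\ref{sect3}--\ref{sect4} with arbitrary cut positions: no such iteration, and indeed no uniform $o(\la^2)$ error bound over a range of $\la$, is needed. Writing $B_r=[0,r]\times[0,r_2]\times\dots\times[0,r_d]$, $\si_r=\si_{B_r}$, the paper proves $(r+s)\si^2_{r+s}=r\si^2_r+s\si^2_s$ by a \emph{single} application of the splitting inequality \cite[Lemma 2.10]{II} (the same one behind Lemma~\ref{2.1}(b), now with the cut at $r$ versus $s$ rather than at the midpoint) to the dilated boxes $2^nB_r$, $2^nB_s$: for each fixed $p>1$ and each fixed $\la$ with $0<|\la|\le\frac1C$,
\begin{multline*}
p f_{2^nB_r}\Big(\frac{\la}{p}\sqrt{\frac{r}{r+s}}\Big)
+ p f_{2^nB_s}\Big(\frac{\la}{p}\sqrt{\frac{s}{r+s}}\Big)
- (p-1)\,g_{2^nB_0}\Big(\frac{1}{p-1}\frac{\la}{\sqrt{2^n(r+s)}}\Big) \le \\
\le f_{2^nB_{r+s}}(\la) \le
\frac1p f_{2^nB_r}\Big(p\la\sqrt{\frac{r}{r+s}}\Big)
+ \frac1p f_{2^nB_s}\Big(p\la\sqrt{\frac{s}{r+s}}\Big)
+ \frac{p-1}{p}\,g_{2^nB_0}\Big(\frac{p}{p-1}\frac{-\la}{\sqrt{2^n(r+s)}}\Big) \, ,
\end{multline*}
and then one lets $n\to\infty$ first and $p\to1+$ afterwards. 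As $n\to\infty$ each $f$-term converges to the corresponding quadratic by Lemma~\ref{5.6} together with \eqref{qb} (that is, \eqref{converg}), evaluated at a \emph{single} fixed argument --- no uniformity is required, because the limits are exact quadratics and one only needs to identify coefficients --- while the boundary term vanishes since its argument is of order $2^{-n/2}\la$ and $g_{2^nB_0}(\mu)\le C_1\mu^2$ once $\mu$ is small and $\width(2^nB_0)\ge C_1$. Dividing by $\la^2/2$ and letting $p\to1+$ gives the additivity.

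Two smaller points. Your packaging of splittability as an additive-error statement $\bigl|f_{2^\al B}(\la)-f_{2^\al B_1}(\la_1)-f_{2^\al B_2}(\la_2)\bigr|=o(\la^2)$ is not what the hypothesis delivers directly: the splitting bound carries the multiplicative H\"older slack $p$ (with rescaled arguments $p\la_i$, $\la_i/p$) in addition to the additive $g$-term, so the limit $p\to1$ must be taken explicitly after $n\to\infty$, as above; this is harmless but cannot be skipped. On the other hand, your use of stationarity to identify the $\si$ of the translate $[r,r+s]\times B_0$ with $\si_{[0,s]\times B_0}$, and the reduction to one coordinate direction at a time, are fine and match the paper. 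So the architecture of your argument is sound; what is missing is the observation that additivity follows from one application of the already-available single-cut bound combined with the limits already established in Lemma~\ref{5.6}, rather than from a new multi-step bookkeeping apparatus.
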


\begin{proof}
We'll prove that $ \si_{[0,r_1]\times\dots\times[0,r_d]} $ does not
depend on $ r_1 $; the same argument works for each $ r_k $. Given $
r_2,\dots,r_d > 0 $, we denote $ B_r = [0,r] \times
[0,r_2] \times \dots \times [0,r_d] $ for $ r>0 $, and $ \si_r
= \si_{B_r} $.

We'll prove that the function $ r \mapsto r\si^2_r $ is
linear.\footnote{%
 Similar to \cite[3b1]{I}.}
Its measurability being ensured by Lemma \ref{5.8}, we prove
additivity:
\begin{equation}\label{addi}
(r+s) \si^2_{r+s} = r \si^2_r + s \si^2_s \, .
\end{equation}
We use \cite[Lemma 2.10]{II} again:\footnote{%
 See the proof of Lemma \ref{2.1}(b).}
for all $ p \in (1,\infty) $,
\begin{multline*}
p f_{B_r} \bigg( \frac{\la}{p} \sqrt{ \frac{r}{r+s} } \bigg) +
 p f_{B_s} \bigg( \frac{\la}{p} \sqrt{ \frac{s}{r+s} } \bigg) -
 (p-1) g_{B_0} \bigg( \frac1{p-1} \frac{\la}{\sqrt{r+s}} \bigg) \le \\
\le f_{B_{r+s}}(\la) \le 
 \frac1p f_{B_r} \bigg( p\la \sqrt{ \frac{r}{r+s} } \bigg) +
 \frac1p f_{B_s} \bigg( p\la \sqrt{ \frac{s}{r+s} } \bigg) +
 \frac{p-1}p
 g_{B_0} \bigg( \frac{p}{p-1} \frac{-\la}{\sqrt{r+s}} \bigg) \, ,
\end{multline*}
where $ g_{B_0}(\mu) \le C_1 \mu^2 $ for $
C_1 |\mu| \le \frac{\sqrt{\vol B_0}}{\log^{d-1} \vol B_0 } $ and
$ \vol B_0 = \frac{ \vol B_{r+s} }{r+s} = \frac{ \vol B_r }{r}
= \frac{ \vol B_s }{s} $ (provided that $ \width B_0 \ge C_1 $; but
see below).

The same applies to rescaled boxes $ 2^n B_r $ (that is, $ 2^\al B_r $
for $ \al = (n,\dots,n) $), $ 2^n B_s \subset \R^d $ and $ 2^n
B_0 \subset \R^{d-1} $, for all $n$. Instead of $
f_{B_r} \( \frac{\la}{p} \sqrt{ \frac{r}{r+s} } \) $ we have $
f_{2^n B_r} \( \frac{\la}{p} \sqrt{ \frac{2^n r}{2^n r + 2^n s}
} \) = f_{2^n B_r} \( \frac{\la}{p} \sqrt{ \frac{r}{r+s} } \) $,
which turns into $ \frac12 \si^2_r \frac{\la^2}{p^2} \frac{r}{r+s}
$ in the limit $ n \to \infty $, for $ |\la| \le \frac1C $,
by \eqref{converg}. The other four terms with $f$ are treated
similarly. But the terms with $g$ are dissimilar; instead of $
g_{B_0} \( \frac1{p-1} \frac{\la}{\sqrt{r+s}} \) $ we have $
g_{2^n B_0} \( \frac1{p-1} \frac{\la}{\sqrt{2^n r + 2^n s}} \)
$, which vanishes in the limit $ n \to \infty $, since $ g_{2^n
B_0}(\mu) \le C_1 \mu^2 $ (provided that $ \width(2^n B_0) \ge C_1 $,
which holds for large $n$) for $ |\mu| \le \frac1{C_1} $.\footnote{%
 And moreover, for $ C_1 |\mu| \le \frac{\sqrt{2^{n(d-1)}\vol
 B_0}}{\log^{d-1} (2^{n(d-1)}\vol B_0 ) } $.}
Thus, we get in the limit, after multiplication by $ \frac2{\la^2} $,
\[
\frac1p \cdot \frac{r}{r+s} \si^2_r
+ \frac1p \cdot \frac{s}{r+s} \si^2_s \le \si^2_{r+s} \le
p \cdot \frac{r}{r+s} \si^2_r + p \cdot \frac{s}{r+s} \si^2_s \, .
\]
Now the limit for $ p \to 1+ $ gives the additivity:
$ \frac{r}{r+s} \si^2_r + \frac{s}{r+s} \si^2_s \le \si^2_{r+s} \le
\frac{r}{r+s} \si^2_r + \frac{s}{r+s} \si^2_s $.
\end{proof}

Ultimately, we define $ \si \in [0,\infty) $ by
\[
\si = \si_B \quad \text{for all boxes } B \subset \R^d \, .
\]

\begin{proof}[Proof of Prop.~\ref{pr}]
Ultimately, given $ \eps>0 $, Remark \ref{5.7} gives $N$ such that the
inequality
\begin{equation}\label{5.13}
\frac12 \si^2 - \eps \le L(2^\al B_1) \le U(2^\al
B_1) \le \frac12 \si^2 + \eps
\end{equation}
holds for all $ \al \in \{N,N+1,\dots\}^d $ and all boxes $B_1$ such
that $ \width B_1 \ge 1 $. We take $ W = 2^N $. Given a box $B$ such
that $ \width B \ge W $, we have $ B = 2^N B_1 $ with $ B_1 $ 
such that $ \width B_1 \ge 1 $. Now the needed inequality
follows from \eqref{5.13} and \eqref{qb}.
\end{proof}

\section[Quadratic approximation based on Sect.~\ref{sect4}]
  {Quadratic approximation based on Sect.~\ref{sect4}}
\label{sect6}
\begin{proposition}\label{6.1}
There exists $ \si \ge 0 $ such that for every $ \eps > 0 $ there exist
$ C > 0 $ and $ W > 0 $ such that the following inequality holds for
all boxes $ B $ such that $ \width B \ge W $ and all $ \la $ such that
$ C |\la| \le \sqrt{ \vol B } \log^{-d} \vol B $:
\[
\Big| f_B(\la) - \frac12 \si^2 \la^2 \Big| \le \eps \la^2 \, .
\]
\end{proposition}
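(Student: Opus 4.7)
The plan is to bootstrap Proposition~\ref{pr} using the multi-step reduction of Section~\ref{sect4}. The key observation is that the hypothesis $C|\la|\le\sqrt{\vol B}\log^{-d}\vol B$ of Proposition~\ref{6.1} is exactly the hypothesis \eqref{4.3a} of Propositions~\ref{4.1} and~\ref{4.20}, while the bound \eqref{4.5} places the $\mu$ they produce (up to a free tolerance parameter) inside the narrower range $\sqrt{S(\vol)}\log^{-(d-1)}S(\vol)$ on which Prop.~\ref{pr} already furnishes a quadratic approximation. Hence Prop.~\ref{6.1} should hold with the same $\si$ as Prop.~\ref{pr}.

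First I would fix the $\si\ge0$ and the constant, call it $C_\ast$, provided by Prop.~\ref{pr}. Given $\eps>0$, pick auxiliary tolerances $\eps_1,\eps_2>0$, and let Prop.~\ref{pr} supply $W_\ast$ with $|f_{B'}(\mu)-\tfrac12\si^2\mu^2|\le\eps_1\mu^2$ whenever $\width B'\ge W_\ast$ and $C_\ast|\mu|\le\sqrt{S(\vol B')}\log^{-(d-1)}S(\vol B')$. Set $W_0:=\max(W_\ast,C_1)$ and apply Props.~\ref{4.1} and~\ref{4.20} with this $W_0$ and tolerance $\eps_2$ to obtain constants $C_{\mathrm{up}}$ and $C_{\mathrm{low}}$. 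Finally take $C:=\max(C_{\mathrm{up}},C_{\mathrm{low}})$ and $W:=\max(W_0,C^{1/d})$, so that $\width B\ge W$ automatically delivers $\vol B\ge C$ and $\width B\ge W_0$.

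Now for such $B$ and any $\la\ne0$ with $C|\la|\le\sqrt{\vol B}\log^{-d}\vol B$, Prop.~\ref{4.1} furnishes $n$ and $\mu$ satisfying \eqref{4.4}--\eqref{4.5}. The bound \eqref{4.5} uses $\log^{-(d-1)}\vol(B/2^n)$, which for $d\ge2$ is dominated by $\log^{-(d-1)}S(\vol(B/2^n))$ once $\vol(B/2^n)\ge\E$, while for $d=1$ both factors equal $1$ by the convention $x^0=1$. So $\eps_2\le1/C_\ast$ ensures $C_\ast|\mu|\le\sqrt{S(\vol(B/2^n))}\log^{-(d-1)}S(\vol(B/2^n))$. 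Since $\width(B/2^n)\ge W_0\ge W_\ast$ by \eqref{4.4}, Prop.~\ref{pr} yields $\bigl|\tfrac1{\mu^2}f_{B/2^n}(\mu)-\tfrac12\si^2\bigr|\le\eps_1$, and combining with \eqref{4.4} gives
\[
\tfrac1{\la^2}f_B(\la)\le(1+\eps_2)\bigl(\tfrac12\si^2+\eps_1\bigr)+\eps_2\le\tfrac12\si^2+\eps_1+\bigl(\tfrac12\si^2+2\bigr)\eps_2
\]
(using $\eps_1\le1$). Prop.~\ref{4.20} provides the symmetric lower bound, and choosing $\eps_1,\eps_2$ sufficiently small in terms of $\eps$ and the (now fixed) $\si^2$ collapses both errors below $\eps$.

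The main obstacle I anticipate is bookkeeping: aligning the $\mu$-range of Section~\ref{sect4} with that of Prop.~\ref{pr}, and tracking that the reduced box $B/2^n$ keeps adequate width and volume. Both are already essentially encoded in \eqref{4.4}--\eqref{4.5}; the free tolerance $\eps_2$ in Prop.~\ref{4.1} is precisely the slack needed to squeeze $\mu$ into the narrower regime of Prop.~\ref{pr}. Beyond this, no genuinely new estimate is required.
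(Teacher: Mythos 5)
Your proposal is correct and follows essentially the same route as the paper's own proof: fix $\si$ and the constant from Prop.~\ref{pr}, use Props.~\ref{4.1} and~\ref{4.20} (with $W$ at least the width threshold of Prop.~\ref{pr} and at least $C_1$) to pass from $\la$ in the wide range to $\mu$ in the range of Prop.~\ref{pr} via \eqref{4.4}--\eqref{4.5}, absorb the constant of Prop.~\ref{pr} into the tolerance of \eqref{4.5}, and choose the tolerances small enough for the final algebra. Your use of two separate tolerances $\eps_1,\eps_2$ (the paper uses a single $\eps_1$ with $C_{\text{\ref*{pr}}}\eps_1\le1$) is only a cosmetic difference.
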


\begin{proof}
Prop.~\ref{pr} gives $ \si $ and $ C_{\text{\ref*{pr}}} $.

We take $ \eps_1>0 $ such that $ (1+\eps_1) \( \frac12 \si^2 + \eps_1
\) + \eps_1 \le \frac12 \si^2 + \eps $ and $ C_{\text{\ref*{pr}}}
\eps_1 \le 1 $.

For $ \eps_1 $, Prop.~\ref{pr} gives $ W_{\text{\ref*{pr}}} $; WLOG, $
W_{\text{\ref*{pr}}} \ge C_1 $.

For $ \eps_1 $ and $ W_{\text{\ref*{pr}}} $, Prop.~\ref{4.1} gives
$C_{\text{\ref*{4.1}}}$.

We take $ C = C_{\text{\ref*{4.1}}} $ and $ W = \max \(
W_{\text{\ref*{pr}}}, C^{1/d} \) $.

For $ \la=0 $ the claimed inequality is trivial. For other $\la$,
Prop.~\ref{4.1} applies to $\eps_1$, $ W_{\text{\ref*{pr}}} $, $B$ and
$\la$ (since $ \vol B \ge (\width B)^d \ge W^d \ge C =
C_{\text{\ref*{4.1}}} $, and $ \width B \ge W \ge W_{\text{\ref*{pr}}} $, and
$ 0 < C_{\text{\ref*{4.1}}} |\la| = C |\la| \le \sqrt{ \vol B }
\log^{-d} \vol B $), giving $ n $ and $ \mu $ that satisfy
\eqref{4.4}, \eqref{4.5}.

Prop.~\ref{pr} applies to $ \eps_1 $, $ B/2^n $ and $ \mu $ (since $
\width(B/2^n) \ge W_{\text{\ref*{pr}}} $ by \eqref{4.4} and, using
\eqref{4.5}, $ C_{\text{\ref*{pr}}} |\mu| \le \frac1{\eps_1} |\mu| \le
\sqrt{ S(\vol(B/2^n)) } \log^{-(d-1)} \vol(B/2^n) \le \sqrt{
S(\vol(B/2^n)) } \log^{-(d-1)} S(\vol(B/2^n)) $),
giving $ \big| f_{B/2^n}(\mu) - \frac12 \si^2 \mu^2
\big| \le \eps_1 \mu^2 $. Combined with \eqref{4.4} it gives the upper
bound:
\[
\frac1{\la^2} f_B(\la) \le (1+\eps_1) \Big( \frac12 \si^2 + \eps_1
\Big) + \eps_1 \le \frac12 \si^2 + \eps \, .
\]
Similarly, using Prop.~\ref{4.20} instead of \ref{4.1}, we get the
lower bound.
\end{proof}

\begin{proof}[Proof of Theorem \ref{theorem1}]
In terms of the box $ B = [0,r_1] \times \dots \times [0,r_d] $ we
have
\begin{multline*}
\frac1{ r_1\dots r_d\la^2 } \log \Ex \exp \la
 \int_{[0,r_1]\times\dots\times[0,r_d]} X_t \, \D t = \\
= \frac1{\la^2 \vol B} \log \Ex \exp \la \int_B X_t \, \D t =
 \frac1{\la^2 \vol B} f_B \( \la \sqrt{\vol B} \) \, .
\end{multline*}
Prop.~\ref{6.1} gives $\si$. 
Given $ \eps>0 $, we need $ R $ and $ \de $ such that
\[
\Big| \frac1{\la^2 \vol B} f_B \( \la \sqrt{\vol B} \) - \frac12 \si^2
\Big| \le \eps
\]
whenever $ \width B \ge R $ and $ 0 < |\la| \le \de \log^{-d} \vol B
$; that is, $ \big| \frac1{\la^2} f_B (\la) - \frac12 \si^2 \big| \le
\eps $ for $  0 < |\la| \le \de \sqrt{\vol B} \log^{-d} \vol B $, or
\[
\Big| f_B(\la) - \frac12 \si^2 \la^2 \Big| \le \eps \la^2 \quad
\text{for } |\la| \le \de \sqrt{\vol B} \log^{-d} \vol B \, .
\]
For $\eps$, Prop.~\ref{6.1} gives $C$ and $W$. It remains to take $ R
= W $ and $ \de = 1/C $.
\end{proof}

Corollaries \ref{1.2}, \ref{1.3} follow from Theorem \ref{theorem1} in
the same way as \cite[1.7, 1.8]{I} follow from \cite[1.6]{I}. The
multidimensional convergence reduces readily to convergence of
sequences.

The implication \ref{theorem1} \imp \ref{1.2} being a special case of
the G\"artner-Ellis theorem, I provide detailed calculations for the
relevant special case of this general theorem in a lemma about just
probability distributions on $\R$ (rather than random fields on
$\R^d$).

\begin{lemma}\label{6.2}
Let $ f : \R \to (-\infty,+\infty] $ be the cumulant generating
function of a probability measure $\mu$ on $\R$; that is,
\[
f(\la) = \log \int_{\R} \E^{\la x} \, \mu(\D x) \quad \text{for } \la
\in \R \, .
\]
If numbers $ \eps \in \(0,\frac1{100}] $ and $ A > a \ge 100 $ satisfy
\[
\Big( \frac12 - \eps \Big) \la^2 \le f(\la) \le \Big( \frac12 + \eps
\Big) \la^2 \quad \text{for all } \la \in [a,A+6(1+A\sqrt\eps)] \, ,
\]
then
\[
0.96 \exp \bigg( \! - \Big( \frac12 + 16\eps + \frac{9}{x} \Big) x^2
\bigg) \le \mu \( (x,\infty) \) \le \mu \( [x,\infty) \) \le \exp
\bigg( \! - \Big( \frac12 - \eps \Big) x^2 \bigg)
\]
for all $ x \in [a,A] $.
\end{lemma}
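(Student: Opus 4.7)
The upper bound is an immediate Chernoff step. Since $x \in [a, A]$ lies in the hypothesis interval, Markov's exponential inequality at parameter $\la = x$ gives
\[
\mu([x, \infty)) \le \E^{-x^2} \int \E^{x y} \, \mu(\D y) = \E^{f(x) - x^2} \le \E^{-(\frac12 - \eps)x^2},
\]
which is the upper half of the claim.

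For the lower bound the plan is the classical Esscher (exponential) tilt. Define the probability measure $\mu_\la$ by $\mu_\la(\D y) = \E^{\la y - f(\la)} \mu(\D y)$, whose cumulant generating function is $f_\la(\nu) = f(\la + \nu) - f(\la)$. For $\la \ge 0$ and $h > 0$,
\[
\mu((x, \infty)) \ge \int_{(x, x+h]} \E^{f(\la) - \la y} \, \mu_\la(\D y) \ge \E^{f(\la) - \la(x+h)} \, \mu_\la((x, x+h]).
\]
Parametrizing $\la = x + \delta_0$ and applying $f(\la) \ge (\frac12 - \eps)\la^2$, the exponential factor is bounded below by
\[
\E^{-x^2/2 + \delta_0^2/2 - (x+\delta_0)h - \eps(x+\delta_0)^2}.
\]

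To handle the probability factor, I would bound both tails of $\mu_\la$ via exponential Markov applied to $\mu_\la$ itself: for $\nu > 0$,
\[
\mu_\la(\{Y > x+h\}) \le \E^{-\nu(x+h) + f(\la+\nu) - f(\la)}, \qquad \mu_\la(\{Y \le x\}) \le \E^{\nu x + f(\la-\nu) - f(\la)},
\]
simplified via the hypothesis quadratic bounds on $f$ at $\la \pm \nu$. Each of these carries an unavoidable $\eps\la^2$-scale slack arising from the gap between the upper bound $(\frac12+\eps)(\cdot)^2$ and the lower bound $(\frac12-\eps)(\cdot)^2$ on $f$: in particular the lower-tail bound optimizes to roughly $\E^{-\delta_0^2/2 + O(\eps\la^2)}$, which forces $\delta_0$ on the order of $\sqrt\eps\,\la + O(1)$ in order to be driven below $0.02$, and a companion choice of $h$ and $\nu$ drives the upper tail down similarly, yielding $\mu_\la((x, x+h]) \ge 0.96$. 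The extended hypothesis interval $[a, A + 6(1 + A\sqrt\eps)]$ is precisely what permits $\la + \nu$ to remain inside the region of quadratic control, since with $\la \le x + \delta_0 \le A + O(1+A\sqrt\eps)$ and $\nu$ of comparable size one has $\la + \nu \le A + 6(1+A\sqrt\eps)$ for suitable absolute constants.

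Combining the exponential factor with the probability factor and tracking all cross-terms should yield $\log \mu((x,\infty)) \ge -\frac{x^2}{2} - 16\eps x^2 - 9x + \log 0.96$, which is the claimed lower bound. I expect the main obstacle to be the numerical bookkeeping: the tilt $\delta_0$ must be chosen large enough to defeat the $\eps\la^2$-scale slack in the tail bounds for $\mu_\la$, yet small enough that the loss $-(x+\delta_0)h$ in the exponential factor fits inside the stated correction $-16\eps x^2 - 9x$; one also needs to verify that when $\la - \nu < a$ the argument can be rescued by convexity of $f$ together with $f(0) = 0$. The hypotheses $\eps \le 1/100$ and $a \ge 100$ are used throughout to absorb lower-order cross-terms, and the specific numerical constants $6$, $16$, $9$, and $0.96$ emerge as outputs of this optimization.
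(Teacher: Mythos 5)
Your upper bound is exactly the paper's, and your lower-bound plan is in substance the paper's argument as well: the paper never names the tilted measure $\mu_\la$, but its splitting of $\int \E^{\la y}\,\mu(\D y)$ over $(-\infty,x]$, $(x,x+2\de)$, $[x+2\de,\infty)$ with $\la=x+\de$ is precisely your Esscher tilt plus two-sided exponential Markov with $\nu=\de$ and $h=2\de$, and your heuristic $\de_0\sim\sqrt\eps\,\la+O(1)$ matches the paper's explicit choice $\de=3(1+x\sqrt\eps)$. The gap is that you stop where the lemma actually lives: the statement is a quantitative one, and the constants $0.96$, $16\eps$, $9/x$ and the window $A+6(1+A\sqrt\eps)$ are exactly what must be verified, yet you leave them as expected ``outputs of this optimization'' (``should yield'', ``I expect the main obstacle to be the numerical bookkeeping''). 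Until you fix $\de_0$, $h$, $\nu$ concretely and push the computation through --- in the paper's notation, showing the lower-tail term is dominated by the upper-tail one, that the exponent $\xi\le-4.05$ so the bracket is $\ge 1-2\E^{-4.05}\ge 0.96$, and that the loss $\zeta=\eps x^2+(2+2\eps)x\de+(1.5+\eps)\de^2$ fits under $(16\eps+\tfrac9x)x^2$ for $\eps\le 0.01$, $x\ge 100$ --- the claimed inequality with these constants is not established.

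One concrete point that signals the parameters were not pinned down: your worry about $\la-\nu<a$ (and a rescue via convexity and $f(0)=0$) never arises if you take the downward shift $\nu$ equal to the tilt offset, i.e.\ $\nu=\de_0$ with $\la=x+\de_0$, because then $\la-\nu=x\ge a$ lies in the hypothesis interval automatically; this is exactly what the paper does, and it is also why the hypothesis window only needs to extend upward, to $\la+\de=x+2\de\le A+6(1+A\sqrt\eps)$. So no new idea is missing relative to the paper --- but the proof itself, namely the bookkeeping that produces the stated constants, remains to be done.
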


\begin{proof}
\begin{sloppypar}
For the upper bound, the proof is simple: $ \mu \( [x,\infty) \) \le
\E^{-x^2} \int_{(x,\infty)} \E^{xy} \, \mu(\D y) \le \exp \( -x^2 +
(0.5+\eps) x^2 \) $. We turn to the lower bound.
\end{sloppypar}

For all $ x,\de,\la \ge 0 $ we have
\begin{gather*}
\int_{(-\infty,x]} \E^{\la y} \, \mu(\D y) \le \E^{\de x}
 \int_{(-\infty,x]} \E^{(\la-\de) y} \, \mu(\D y) \, , \\
\int_{[x+2\de,\infty)} \E^{\la y} \, \mu(\D y) \le \E^{-\de(x+2\de)}
 \int_{[x+2\de,\infty)} \E^{(\la+\de) y} \, \mu(\D y) \, .
\end{gather*}
Assuming that $ a \le \la-\de \le \la+\de \le A + 6(1+A\sqrt\eps) $ we get
\begin{gather*}
\int_{(-\infty,x]} \E^{\la y} \, \mu(\D y) \le \exp \( \de x +
 (0.5+\eps) (\la-\de)^2 \) \, , \\
\int_{[x+2\de,\infty)} \E^{\la y} \, \mu(\D y) \le \exp \(
 -\de(x+2\de) + (0.5+\eps) (\la+\de)^2 \) \, .
\end{gather*}
From now on we take
\[
\la = x + \de \, .
\]
We note that
\begin{multline*}
\( \de x + (0.5+\eps) (\la-\de)^2 \) - \( -\de(x+2\de) + (0.5+\eps)
 (\la+\de)^2 \) = \\
= 2\de x + 2\de^2 - (0.5+\eps) \cdot 4\la\de = 2\de \( x+\de -
(0.5+\eps) (2x+2\de) \) = -4\eps\de(x+\de) \le 0 \, ,
\end{multline*}
therefore, assuming that $ 2\de \le 6(1+A\sqrt\eps) $,
\begin{multline*}
\int_{(x,x+2\de)} \E^{\la y} \, \mu(\D y) \ge \int_{\R} \E^{\la y} \,
 \mu(\D y) - 2 \exp \( -\de(x+2\de) + (0.5+\eps) (\la+\de)^2 \) \ge \\
\ge \exp \( (0.5-\eps) \la^2 \) - 2 \exp ( \dots ) =
\exp \( (0.5-\eps) \la^2 \) ( 1 - 2 \exp \xi ) \, ,
\end{multline*}
where
\[
\xi = -\de(x+2\de) + (0.5+\eps) (\la+\de)^2 - (0.5-\eps) \la^2 \, .
\]
Still, $ \la = x+\de $, thus
\begin{multline*}
\xi = -\de(x+2\de) + (0.5+\eps) (x+2\de)^2 - (0.5-\eps) (x+\de)^2 = \\
= 2\eps x^2 + 6\eps\de x + 5\eps\de^2 - 0.5 \de^2 \, .
\end{multline*}
We choose $\de$ as follows:
\[
\de = 3(1+x\sqrt\eps) \, ;
\]
now
\begin{multline*}
\xi = 2\eps x^2 + 6\eps x \cdot 3(1+x\sqrt\eps) + (5\eps-0.5) \cdot
 9(1+x\sqrt\eps)^2 = \\
= -9 (0.5-5\eps) - x^2 \eps \Big( 2.5 - 45\eps - 18\sqrt\eps -
 \frac{18}x \Big) - 18 x \sqrt\eps (0.5-5\eps) \, .
\end{multline*}
For $ \eps \le 0.01 $ and $ x \ge 100 $ we have $ 2.5 - 45\eps -
18\sqrt\eps - \frac{18}x \ge 2.5 - 0.45 - 1.8 - 0.18 = 0.07 \ge 0 $,
thus, $ \xi \le -9 \cdot 0.45 = -4.05 $; $ 1 - 2 \exp \xi \ge 1 - 2
\cdot 0.018 \ge 0.96 $;
\[
\int_{(x,x+2\de)} \E^{\la y} \, \mu(\D y) \ge 0.96 \exp \( (0.5-\eps)
\la^2 \) \, .
\]
Therefore
\begin{multline*}
\mu \( (x,\infty) \) \ge \mu \( (x,x+2\de) \) \ge \E^{-\la(x+2\de)}
 \int_{(x,x+2\de)} \E^{\la y} \, \mu(\D y) \ge \\
\ge 0.96 \exp \( (0.5-\eps)
 \la^2 - \la (x+2\de) \) = 0.96 \exp ( -0.5 x^2 - \zeta ) \, ,
\end{multline*}
where
\begin{multline*}
\zeta = -0.5 x^2 - (0.5-\eps) \la^2 + \la (x+2\de) = \\
 = -0.5 x^2 - (0.5-\eps) (x+\de)^2 + (x+\de)(x+2\de) = \\
 = \eps x^2 + (2+2\eps) x \de + (1.5+\eps) \de^2 = \\
 = \eps x^2 + 2(1+\eps)x \cdot 3(1+x\sqrt\eps) + (1.5+\eps) \cdot
  9(1+2x\sqrt\eps+x^2 \eps) = \\
 = \eps ( 14.5 + 6\sqrt\eps + 9\eps ) x^2 +
 ( 6 + 27\sqrt\eps + 6\eps + 18\eps\sqrt\eps ) x + ( 13.5 + 9\eps )
 \, ; 
\end{multline*}
for $ \eps \le 0.01 $ and $ x \ge 100 $ we have $ \zeta \le 15.19 \eps
x^2 + 8.778 x + 13.59 \le \( 15.19 \eps + \frac{8.9139}{x} \) x^2 \le \(
16\eps + \frac9x \) x^2 $.
\end{proof}

\begin{proof}[Proof of Corollary \ref{1.2}]
In terms of the box $ B = [0,r_1] \times \dots \times [0,r_d] $ we
have
\[
\PR{ \int_{[0,r_1]\times\dots\times[0,r_d]} X_t \, \D t \ge c\si
\sqrt{r_1\dots r_d} } = \PR{ \int_B X_t \, \D t \ge c\si \sqrt{\vol
B} } \, .
\]
Given $\eps$, we need $ R $ and $ \de $ such that
\[
\bigg| \frac1{c^2} \log \PR{ \int_B X_t \, \D t \ge c\si \sqrt{\vol B}
} + \frac12 \bigg| \le \eps
\]
whenever $ \width B \ge R $ and $ R \le c \le \sqrt{ \de
\vol B } \log^{-d} \vol B $. WLOG, $ \eps \le 0.32 $.

For $ \eps_{\text{\ref*{6.1}}} = \frac{ \si^2 }{ 32 } \eps $,
Prop.~\ref{6.1} gives $ C_{\text{\ref*{6.1}}} $ and $ W $.
We take $ R = \max(W,100,\frac{10}\eps) $ and $ \de = \( \frac{ \si }{
1.66 C_{\text{\ref*{6.1}}} } \)^2 $.

Prop.~\ref{6.1} applies to $ \eps_{\text{\ref*{6.1}}} $ and $ B $
(since $ \width B \ge R \ge W $), giving $ | f_B(\la) - 0.5 \si^2 \la^2
| \le \eps_{\text{\ref*{6.1}}} \la^2 $ for $ C_{\text{\ref*{6.1}}}
|\la| \le \sqrt{v} \log^{-d} v $, where $ v = \vol B $.

We consider the distribution $ \mu $ of the random variable $
\frac1{\si\sqrt v} \int_B X_t \, \D t $ and its cumulant generating
function $f$; $ f(\la) = \log \Ex \exp \frac{\la}{\si\sqrt v} \int_B
X_t \, \D t = f_B \( \frac\la\si \) $. In order to apply Lemma
\ref{6.2} we take $ \eps_{\text{\ref*{6.2}}} = \frac1{32} \eps =
\frac{ \eps_{\text{\ref*{6.1}}} }{ \si^2 } \in (0,0.01] $, $ a=100 $
and $ A = \sqrt{ \de v } \log^{-d} v $. We note that $ a = 100 \le R
\le c \le A $ and $ A + 6(1+A\sqrt{\eps_{\text{\ref*{6.2}}}}) =
A(1+6\sqrt{\eps_{\text{\ref*{6.2}}}}) + 6 \le A(1+6\cdot 0.1) + 6 \le
1.66 A $. Lemma \ref{6.2} applies, since for all $ \la \in
[a,A+6(1+A\sqrt{\eps_{\text{\ref*{6.2}}}})] $ we have $
C_{\text{\ref*{6.1}}} \big| \frac\la\si \big| \le \frac{ 1.66A }{
C_{\text{\ref*{6.1}}} \si } = \frac{ 1.66 }{ C_{\text{\ref*{6.1}}} \si
} \sqrt{\de v} \log^{-d} v = \sqrt{v} \log^{-d} v $, whence $ |
f(\la) - 0.5 \la^2 | = \big| f_B \( \frac\la\si \) - 0.5 \si^2 \(
\frac\la\si \)^2 \big| \le \eps_{\text{\ref*{6.1}}} \(
\frac\la\si \)^2 = \eps_{\text{\ref*{6.2}}} \la^2 $.

\begin{sloppypar}
Clearly, $ \PR{ \int_B X_t \, \D t \ge c\si \sqrt{\vol B} } =
\mu\([c,\infty)\) $. For all $ c \in [R,\sqrt{\de v} \log^{-d} v ]
\subset [a,A] $ Lemma \ref{6.2} gives $ | \log \mu\([c,\infty)\) + 0.5
c^2 | \le |\log 0.96| + ( 16 \eps_{\text{\ref*{6.2}}} + \frac9c ) c^2
$; it remains to check that $ |\log 0.96| + ( 16
\eps_{\text{\ref*{6.2}}} + \frac9c ) c^2 \le \eps c^2 $, that is, $
\frac{|\log 0.96|}{\eps c^2} + \frac{0.5}{c^2} + \frac9{\eps c} \le 1
$. Taking into account that $ c \ge R \ge \frac{10}\eps $, that is, $
\eps c \ge 10 $, we have $ \frac{|\log 0.96|}{\eps c^2} +
\frac{0.5}{c^2} + \frac9{\eps c} \le \frac{0.05}{100\eps c} +
\frac{0.5}{100^2} + \frac9{\eps c} \le \frac{0.05}{100\cdot 10} +
\frac{0.5}{100^2} + \frac9{10} = 0.9001 \le 1 $.
\end{sloppypar}
\vspace{-12pt}
\end{proof}

\bigskip
\filbreak
{
\small
\begin{sc}
\parindent=0pt\baselineskip=12pt
\parbox{4in}{
Boris Tsirelson\\
School of Mathematics\\
Tel Aviv University\\
Tel Aviv 69978, Israel
\smallskip
\par\quad\href{mailto:tsirel@post.tau.ac.il}{\tt
 mailto:tsirel@post.tau.ac.il}
\par\quad\href{http://www.tau.ac.il/~tsirel/}{\tt
 http://www.tau.ac.il/\textasciitilde tsirel/}
}

\end{sc}
}
\filbreak

\end{document}